\newif\ifdraftmode
    \newcommandx{\unsure}[2][1=]{\todo[linecolor=red,backgroundcolor=red!25,bordercolor=red,#1]{#2}}
    \newcommandx{\change}[2][1=]{\todo[linecolor=blue,backgroundcolor=blue!25,bordercolor=blue,#1]{#2}}
    \newcommandx{\info}[2][1=]{\todo[linecolor=OliveGreen,backgroundcolor=OliveGreen!25,bordercolor=OliveGreen,#1]{#2}}
    \newcommandx{\improvement}[2][1=]{\todo[linecolor=Plum,backgroundcolor=Plum!25,bordercolor=Plum,#1]{#2}}
    \newcommandx{\thiswillnotshow}[2][1=]{\todo[disable,#1]{#2}}
    \renewcommand*\showkeyslabelformat[1]{%
    \noexpandarg%
    \fbox{\parbox[t]{\marginparwidth}{\raggedright\normalfont\small\ttfamily\expandafter\seqsplit\expandafter{#1}}}}
    \colorlet{refkey}{RubineRed!75}
    \colorlet{labelkey}{RubineRed!75}
    \patchcmd{\@todonotes@drawMarginNoteWithLine}{\marginpar}{\marginnote}{}{}
    \newcommandx{\unsure}[2][1=]{}
    \newcommandx{\change}[2][1=]{}
    \newcommandx{\info}[2][1=]{}
    \newcommandx{\improvement}[2][1=]{}
    \newcommandx{\thiswillnotshow}[2][1=]{}
\definecolor{LightGrey}{HTML}{EEEEEE}
\definecolor{Grey}{HTML}{C7C8CC}
\definecolor{DarkGrey}{HTML}{373A40}
\newtheorem{theorem}{Theorem}[section]
\newtheorem{lemma}[theorem]{Lemma}
\newtheorem{proposition}[theorem]{Proposition}
\newtheorem{corollary}[theorem]{Corollary}
\newtheorem{definition}[theorem]{Definition}
\newtheorem{remark}[theorem]{Remark}
\newcommand{\Var}{\textup}
\crefname{algocf}{Algorithm}{Algorithms}
\newlist{assumpenum}{enumerate}{1} 
\setlist[assumpenum]{label=(A\arabic*), ref=\theassumption~(\arabic*)}
\Crefname{assumpenumi}{Assumption}{Assumptions}
\DeclareMathOperator*{\argmin}{arg\,min}
\newcommand{\R}{\mathbb R}
\newcommand{\N}{\mathbb N}
\newcommand{\Id}{\mathrm{Id}}
\DeclareMathOperator{\E}{\mathbb E}
\DeclareMathOperator{\Lip}{Lip}
\DeclareMathOperator{\rank}{rank}
\DeclareMathOperator{\Span}{span}
\DeclareMathOperator{\diag}{diag}
\newcommand{\proofAppendix}[1]{%
    \ifx\@currentvi\@proofenv
        See~\cref{#1}.
    \else
        \begin{proof}See~\cref{#1}.\end{proof}
    \fi
}
\newcommand*\@proofenv{proof}
\title{Approximation and learning with compositional tensor trains}
\author[1]{Martin Eigel}
\author[2,*]{Charles Miranda}
\author[2]{Anthony Nouy}
\author[1]{David Sommer}
\affil[1]{\footnotesize Weierstrass Institute for Applied Analysis and Stochastics, Berlin, Germany}
\affil[2]{\footnotesize Centrale Nantes, Nantes Universit\'e, Laboratoire de Math\'ematiques Jean Leray UMR CNRS 6629, France}
\affil[*]{\footnotesize Corresponding author: \texttt{charles.miranda@ec-nantes.fr}}
\date{}
\begin{document}
\maketitle

\begin{abstract}
    We introduce compositional tensor trains (CTTs) for the approximation of multivariate functions, a class of models obtained by composing low-rank functions in the tensor-train format. This format can encode standard approximation tools, such as (sparse) polynomials, deep neural networks (DNNs) with fixed width, or tensor networks with arbitrary permutation of the inputs, or more general affine coordinate transformations, with similar complexities.
    This format can be viewed as a DNN with width exponential in the input dimension and structured weights matrices. Compared to DNNs, this format enables controlled compression at the layer level using efficient tensor algebra.
    
    On the optimization side, we derive a layerwise algorithm inspired by natural gradient descent, allowing to exploit efficient low-rank tensor algebra. This relies on low-rank estimations of   Gram matrices, and tensor structured random sketching. Viewing the format as a discrete dynamical system, we also derive an optimization algorithm inspired by numerical methods in optimal control. Numerical experiments on regression tasks demonstrate the expressivity of the new format and the relevance of the proposed optimization algorithms.

    Overall, CTTs combine the expressivity of compositional models with the algorithmic efficiency of tensor algebra, offering a scalable alternative to standard deep neural networks.
\end{abstract}

\paragraph{Keywords.}
compositional tensor networks, tensor train, approximation, natural gradient

\ifdraftmode
    \listoftodos
\fi

\section{Introduction}

This work studies compositional tensor trains (CTTs), which are compositions of functions in tensor-train (TT) format, defining a new class of  expressive and compressible models for high-dimensional approximation. We quantify their expressivity by showing that usual approximation tools can be efficiently encoded with CTTs, and propose optimization methods relying on optimal control algorithms (viewing CTTs as a time-discrete dynamical system) or natural gradient schemes, leveraging efficient tensor algebra techniques.

\paragraph{Low-rank tensor formats.} Tensor networks represent a powerful tool in the realm of high-dimensional data analysis and functional approximation, having gained significant attention in recent years due to their ability to represent  efficiently complex structures. Among such formats, TTs~\cite{Oseledets2011} were initially developed to address the curse of dimensionality in quantum physics, where they are also known as Matrix Product States, but have since been applied in a number of other fields, including machine learning and computational science.

\paragraph{Neural networks as compositional formats.} The advent of neural networks (NNs) has transformed our ability to tackle complex problems and became the foundation of modern machine learning. These models are particularly adept at learning intricate patterns and relationships within data. However, as the complexity of tasks increases, so does the necessity for bigger and more sophisticated architectures, which in turn give rise to challenging, time-consuming, and resource-intensive training processes. In fact, typically slow optimization is a central obstacle for achieving SOTA models in current AI topics such as generative modeling and LLMs.

\paragraph{Composition of low-rank formats.} In this evolving landscape of computational techniques, compositional formats have emerged as a promising approach to approximate functions with high accuracy, alleviating the curse of dimensionality under certain conditions. NNs are, of course, the most famous example of such a compositional format. The literature on compositional TTs is still practically non-existent, but there are a few notable works. The first is the Deep Inverse Rosenblatt Transport (DIRT)~\cite{cui2022deep}, which constructs a transport map as a composition of TTs. The second is the tensor rank reduction via coordinate flows~\cite{dektorTensorRankReduction2023}, which builds a nonlinear coordinate transformation so that the function in the new coordinate system has smaller tensor rank, leading to a ridge-like TT. A similar work can be found in \cite{Grelier2019Dec}. 

\paragraph{Optimal control.} A compositional format can be understood as a discretization of a continuous flow. An example of this is the connection between neural ODEs and residual neural networks. This continuous viewpoint offers a natural training procedure by considering the flow and the training loss as state equation and cost functional of an optimal control problem. In a classic optimize-then-discretize approach \cite{onken2021otflowfastaccuratecontinuous}, we can then use classic optimal control techniques like Pontryagin's maximum principle (PMP) to approximate discrete-time optimal controls, which is equivalent to training compositional TTs in our sense.

\paragraph{Contributions of this paper.} This paper investigates the composition of TTs, introduced in \cite{Schneider2024} as a new approximation format, delving into its fundamental principles and examining how they intersect with other approximation tools such as (sparse) polynomials, ridge approximation, deep neural networks, etc. We show how these approximation tools can be encoded into the CTT format, with similar complexities, thereby demonstrating that their approximation spaces are continuously embedded in those of CTTs. This new format comes with efficient and controlled SVD-based compression methods. In addition, this work introduces training procedures of CTTs for general machine learning tasks, based on algorithms inspired from optimal control or natural gradient schemes. These training procedures rely on efficient tensor algorithms that exploit low-rank structures of the layers.

\paragraph{Structure of the paper.} The structure of this work is as follows. \cref{sec:tensor_formats} covers the low-rank formats needed to define our compositional low-rank format, namely the canonical (CP) format and the tensor train (TT) format.
\cref{sec:CTT} introduces the {CTT} format. Aside from rigorous definition and some theoretical properties of the format, we give  examples of  functions  which are extremely difficult to approximate in the TT format due to high ranks, but can be very efficiently expressed in CTT format. \cref{sec:optimization} derives two learning algorithms for the CTT format. The first algorithm, coming from the optimal control literature, is based on Pontryagin's maximum principle (PMP) and the method of successive approximation, whose practical implementation benefits from efficient tensor algebra.  Under certain assumptions on the bases used for defining the CTT format, we can show convergence of the scheme to a unique fixed point satisfying the PMP. The second algorithm is a modified (layer-wise) natural gradient scheme, allowing one to exploit efficient tensor methods and to preserve the low-rank format of the layers. Finally, \cref{sec:numerics} experimentally shows the power of CTTs and the learning methods on test problems.

\section{Tensor Formats}\label{sec:tensor_formats}
We briefly review the TT factorization, its computational costs, and the TT-SVD construction. We also recall how linear maps are represented as TT-matrices and how ranks relate to matricization ranks.


Consider the task of approximating a function from $ \mathbb{R}^d $ to $\mathbb{R}$. Given some finite-dimensional spaces $\mathcal V^{(k)}_{n_k}$ of univariate functions, $1\le k \le d$, we define the finite-dimensional (functional) tensor space $\mathcal V_{\bm{ n}} = \mathcal V^{(1)}_{n_1} \otimes \ldots \otimes \mathcal V^{(d)}_{n_d}$, which is the linear span of elementary tensor products $v^{(1)} \otimes \ldots \otimes v^{(d)} : (x_1, \dots , x_d) \mapsto v^{(1)}(x_1) \ldots  v^{(d)}(x_d) $, with $v^{(k)} \in \mathcal V^{(k)}_{n_k}$. Given bases $\{\phi_i^{(k)}\}_{i=1}^{n_k}$ of the spaces $\mathcal V^{(k)}$, a basis of $\mathcal V_{\bm{ n}}$ is given by the collection of all tensor products $\{\phi^{(1)}_{i_1}\otimes\ldots\otimes \phi^{(d)}_{i_d} : 1\le i_1 \le n_1, \ldots , 1\le i_d \le n_d\}$.


For any function $v$ in $\mathcal V_{\bm{n}}$, there is a unique  algebraic tensor $\bm{A}\in \mathbb{R}^{n_1} \otimes  \ldots \otimes \mathbb{R}^{n_d}$, identified with a multidimensional array in $\mathbb{R}^{n_1\times \ldots \times n_d}$, such that
\begin{equation}\label{eq:tensor_function}
    v = \sum_{i_1,\ldots,i_d=1}^{n_1,\ldots,n_d} \bm{A}({i_1,\ldots,i_d}) \phi^{(1)}_{i_1}\otimes\ldots\otimes \phi^{(d)}_{i_d} ,
\end{equation}
or equivalently 
 \begin{equation}
 \label{eq:tensor_function_frob}
     v(x) = \langle \bm A, \Phi^{(1)}(x_1)\otimes\dots\otimes\Phi^{(d)}(x_d)\rangle_F := \langle \bm A, \bm{\Phi}(x) \rangle_F ,
 \end{equation}
 where $\Phi^{(k)}(x_k) := (\phi_j^{(k)}(x_k))_{j=1}^{n_k}$ and $\langle .,.\rangle_F$ denotes the canonical (Frobenius) inner product. 
In the above notation, $\bm{A}$ is called the \textit{coefficient tensor} of function $v$ with respect to the tensor product basis.



The storage complexity of such a tensor is $ \mathcal O(n^d)$, where $n=\max\{n_1,\ldots,n_d\}$. This exponential scaling, often called the \textit{curse of dimensionality}, makes the treatment of high-dimensional problems intractable for $d \gg 1$. Low-rank \emph{tensor formats} are specifically developed to tackle this \emph{curse of dimensionality} and represent tensors in a storage-efficient way. We mention only two such formats, which are relevant for this work, namely the \textit{canonical format} and the \textit{tensor train} format.

    Like any rank-$1$ matrix $A\in\mathbb{R}^{n_1\times n_2}$ can be defined as the outer product $v\otimes w$ of two vectors $v\in\mathbb{R}^{n_1},w\in\mathbb{R}^{n_2}$, a tensor $\bm{A}\in\mathbb{R}^{n_1\times\ldots\times n_d}$ is said to have rank $1$ if there exist $v^{(1)}\in\mathbb{R}^{n_1},\ldots,v^{(d)}\in\mathbb{R}^{n_d}$ such that 
    \begin{equation}\label{eq:rank_one_tensor}
        \bm{A} = v^{(1)}\otimes\ldots \otimes v^{(d)}.
    \end{equation}
    It is important to note that, while there is only one commonly used notion of  matrix rank, there are several different concepts of ranks of tensors. These include the \textit{canonical polyadic (CP) rank}, the \textit{(hierarchical) Tucker rank} and the \textit{tensor train (TT) rank}, to just name a few. These rank notions are each connected to different decompositions of the underlying tensor, i.e. the \textit{canonical polyadic decomposition}, the \textit{(hierarchical) Tucker decomposition} or the \textit{tensor train} decomposition, much like the matrix rank is connected to a singular value decomposition. The reason we can describe a tensor of the form \eqref{eq:rank_one_tensor} as having general rank $1$ is because the mentioned rank notions 
    (and to the best of our knowledge other important rank concepts) 
    agree on which tensors have rank $1$ and which do not. 

 \paragraph{The canonical format.}
    For any tensor $\bm A \in \mathbb{R}^{n_1\times\ldots\times n_d}$, there exists a $r_{\mathrm{cp}} \in \N$ and a representation
    \begin{equation}\label{eq:cp_decomposition}
        \bm A = \sum_{k=1}^{r_{\mathrm{cp}}} v_k^{(1)} \otimes\dots\otimes v_k^{(d)}
    \end{equation}
    with 
    vectors $\{v_k^{(i)}\}_{k=1}^{r_{\mathrm{cp}}} \subset \mathbb{R}^{n_i}$, for all $i \in \{1,\dots,d\}$. Such a representation of $\bm A$ as a sum of linearly independent rank-$1$ tensors is called a CP decomposition. The minimal number $r_{\mathrm{
    cp}}$ is the CP rank or simply \textit{canonical rank} of the tensor $\bm A$ and denoted by $\rank(\bm A)$. The set of tensors with canonical rank smaller or equal to $r\in \mathbb{N}$ is not closed in general \cite[Example 7.1]{hillar2013most}, which makes the canonical format problematic for optimization, in particular it does not guarantee the existence of a best rank-$r$ approximation. The tensor train format 
    has emerged, among other formats, as a low rank format suitable for optimization. In particular, the set of tensor trains with bounded \textit{tensor train rank} (defined in the following paragraph) is closed. 


    \paragraph{The tensor train format.} 
    Since the (functional) TT format is used primarily in this work, we introduce it in some detail. We start with the appropriate notion of rank. 
     The \textit{tensor train rank} of a tensor $\bm{A}\in \R^{n_1 \times\dots\times n_d}$ is the elementwise smallest tuple $\bm{r}_{\mathrm{TT}}=(r_1,\ldots,r_{d-1})\in\mathbb{N}^{d-1}$ such that there exist matrix valued mappings $U_j : \N_{\leq n_j} \to \R^{r_{j-1} \times r_j}$, $j=1,\ldots,d$, with $r_0=r_d=1$ by convention, such that
    \begin{equation}\label{eq:tt_decomposition}
         \bm A(i_1,\dots,i_d) = U_1(i_1)\dots U_d(i_d) \quad \text{for all } \quad (i_1,\dots,i_d) \in \N_{\leq n_1}\times\dots\times \N_{\leq n_d},
    \end{equation}
    with $\N_{\leq n} := \{1, \dots, n\}$.  
    The decomposition \eqref{eq:tt_decomposition} is called \textit{tensor train decomposition} and the functions $U_j$ are called the \emph{components} of the tensor train. The alternative name of \textit{matrix product states} comes from the fact that each entry of $\bm{A}$ is represented by a product of matrices. Such a representation exists for any tensor, like a singular value decomposition exists for any matrix. In fact, the standard algorithm by which a TT decomposition of a tensor is obtained is called TT-SVD \cite{Oseledets2011}. Note that the components of the tensor train can equivalently be defined as order-3 tensors $\bm U_j \in \R^{r_{j-1} \times n_j \times r_j}$, and therefore, any tensor $\bm A$ can be written in the form
    \begin{equation*}
        \bm A = \sum_{k_0=1}^{r_0}\dots\sum_{k_d=1}^{r_d} \bm U_1(k_0,\cdot,k_1)\otimes\dots\otimes \bm U_d(k_{d-1},\cdot,k_d).
    \end{equation*}
    In the following, we consider the components of a tensor train to be order-3 tensors. To ease notation, we collect the dimensions of the tensor into a tuple $\bm n = (n_1,\ldots,n_d)$. 
    The set of tensors of dimension $\bm n$   with \emph{tensor train rank} $\bm{r}$ is denoted by
    \begin{equation*}
        \mathcal T_{\bm{ n}, \bm{ r}} := \{\bm A \in \R^{n_1\times\dots\times n_d} : \rank_{TT}(\bm A) = \bm{ r}\}.
    \end{equation*}
    For later use, we also define the set of tensors with ranks element-wise smaller or equal to $\bm r$ by $\mathcal T_{\bm{ n}, \leq \bm{ r}} = \bigcup_{0 \le \hat{\bm r}\leq \bm r} \mathcal{T}_{\bm n,\bm \hat{\bm r}}$, which is a closed set.
    Clearly, storing the tensor is equivalent to storing the $d$ order-3 tensors $\bm {U}^{(j)}$. Hence, the storage complexity of a tensor in such a format is in $\mathcal O(dnr^2)$, where $n=\max\{n_1,\ldots,n_d\}$, $r=\max\{r_1,\ldots,r_{d-1}\}$. In the most general case, the rank bound $r$ depends exponentially on the dimension $d$, reflecting the curse of dimensionality for general tensors. However, several notable functions, such as Gaussian potentials, exhibit low rank structures in the TT format \cite{oseledets2013constructive,grelier2022learning,sommer2024generative}, yielding (at most) a polynomial growth of $r$ with $d$. In such cases, the TT format effectively mitigates the curse of dimensionality.
The TT format is often represented with a tensor network diagram as illustrated in \cref{fig:tt}.
\begin{figure}[h!]
    \centering
    \begin{tikzpicture}[core/.style={
        shape=circle,
        draw=black,
        inner sep=2pt,
        font=\tiny,
        minimum size=0.3cm,
        very thick
    }]
        \def\Leg#1#2{%
            \node[below=0.5 of #1, label={[anchor=north, label distance=-2]below:{{#2}}}] (l#1) {};
        }
        \node[core, label=above:{$\bm{U}_1$}] (V1) {};
        \Leg{V1}{$n_1$}
        \node[core, right=1 of V1, label=above:{$\bm{U}_2$}] (V2) {};
        \Leg{V2}{$n_2$}
        \node[core, right=1 of V2, label=above:{$\bm{U}_{d-1}$}] (Vp) {};
        \Leg{Vp}{$n_{d-1}$}
        \node[core, right=1 of Vp, label=above:{$\bm{U}_d$}] (Vd) {};
        \Leg{Vd}{$n_d$}
    
        \draw[very thick] (V1) edge (lV1) -- (V2) edge (lV2)
                                (V2) -- ($(V2)+(0.3,0)$)
                                ($(Vp)-(0.3,0)$) -- (Vp)
                                (Vp) edge (lVp) -- (Vd) edge (lVd);
        \draw [line width=0.75, dotted] ($(V2)+(0.5,0)$) -- ($(Vp)-(0.5,0)$);
    \end{tikzpicture}
    \caption{Tensor-train format with cores $\bm{U}_1,\dots,\bm{U}_d$}
    \label{fig:tt}
\end{figure}

Interestingly, the space of tensors $\mathcal T_{\bm n, \bm r}$ with fixed TT ranks is a Riemannian submanifold.
    The dimension of this manifold is $
        \dim{\mathcal T_{\bm{ n}, \bm{ r}}} = \sum_{i=1}^d r_{i-1}n_i r_i - \sum_{i=1}^{d-1} r_i^2$
\cite[Lemma~4.1]{Holtz2011}. Note that the first sum in this expression is just the added up storage complexity of all $d$ components. The second term results from a \textit{gauge invariance} inherent in the format that can be seen as follows. For any two subsequent components $U_j, U_{j+1}$ we can transform $U_j(\cdot)\to U_j(\cdot)R^{-1}$ and $U_{j+1}(\cdot)\to RU_{j+1}(\cdot)$, where $R\in\mathbb{R}^{r_j\times r_j}$ is an invertible matrix, without changing the tensor. Hence, the components $U_j$ are only unique up to such invertible transformations, which is taken into account by the second sum making up $\dim(\mathcal{T}_{\bm n, \bm r})$. 

Any tensor in CP format can easily be represented in TT format. Indeed, let $\bm A = \sum_{k=1}^{r_{\mathrm{cp}}} v_k^{(1)} \otimes\dots\otimes v_k^{(d)}$, $v_k^{(i)} \in \R^{n_i}$. Then
\begin{equation*}
    \bm A(i_1,\dots,i_d) = \begin{bmatrix}
        v_1^{(1)}(i_1) & \dots & v_{r_{\mathrm{cp}}}^{(1)}(i_1)
    \end{bmatrix}
    \begin{bmatrix}
        v^{(2)}_1(i_2) & & 0\\
         & \ddots & \\
        0 & & v^{(2)}_{r_{\mathrm{
        cp}}}(i_2)
    \end{bmatrix}
    \dots
    \begin{bmatrix}
        v_{{1}}^{(d)}(i_d)\\
        \vdots\\
        v_{r_{\mathrm{cp}}}^{(d)}(i_d)
    \end{bmatrix}
\end{equation*}
is a representation in TT format, which can easily be verified by multiplying out the matrix product. Note that the CP rank $r_{\mathrm{cp}}$ is a uniform upper bound for the TT ranks. Rounding to a prescribed rank in the TT format is done via the procedure described in \cite[Algorithm 2]{Oseledets2011}, i.e. by successively truncating the singular value decompositions of component unfoldings to the desired rank (or accuracy). For later use, we let $\mathtt{round}_{\bm r}$ denote the rounding procedure of a tensor in $\mathbb{R}^{n_1\times\ldots\times n_d}$ to fixed rank $\bm r$.

\paragraph{Functional tensor trains.} 
The set of functions in $\mathcal{V}_{\bm n}$ with coefficient tensors in $\mathcal{T}_{\bm n, \bm r}$ is denoted $\mathcal{T}_{\bm r}(\mathcal{V}_{\bm n})$ and called the \emph{functional tensor train format}. A function $v \in \mathcal{T}_{\bm r}(\mathcal{V}_{\bm n})$ admits the representation  
\begin{equation}\label{eq:ftt}
    v(x_1,\ldots, x_d) = \hat{U}_1(x_1) \hat{U}_2(x_2) \cdots \hat{U}_d(x_d)
\end{equation}
with matrix valued functions $\hat{U}_i(x_i)\in\mathbb{R}^{{r}_{i-1} \times {r}_i}$ for $i=1,\ldots,d$, again with the convention ${r}_0={r}_d=1$. The function $\hat{U}_i$ is identified with a third-order tensor $\bm U_i \in \mathbb{R}^{r_{i-1} \times n_i \times r_i}$ through the relation  $\hat {U}_i(k_{i-1},x_i,k_i) = \sum_{j=1}^{n_i} \bm U_i(k_{i-1},j,k_i)\phi_j^{(i)}(x_i)$. 

The smallest $\bm r$ such that $v$ is in $\mathcal{T}_{\bm r}(\mathcal{V}_{\bm n})$ is the tensor train rank of $v$, which coincides with the tensor train rank of its coefficient tensor, independently of the choice of bases of functions spaces $\mathcal{V}^{(i)}_{n_i}$. Results on  approximation with TT  format of functions of classical smoothness classes or with compositional structures can be found in  \cite{Schneider201456,griebel2023low,bachmayr2023low,bachmayr2023approximation,ali2021approximation,ali2023approximation}.

\paragraph{Vector valued tensor trains.}
In this work, we are frequently concerned with vector valued functions $v\colon \mathbb{R}^d \to \mathbb{R}^d$. For this case, we permit $r_0=d$, leading to a first component $\bm U_1\in\mathbb{R}^{d\times n_1\times {r}_1}$ such that the contraction of all components yields a vector in $\mathbb{R}^d$. This first component tensor is identified with the component function  $\hat U_1 \in \mathbb{R}^{d} \otimes \mathcal{V}^{(1)}_{n_1} \otimes \mathbb{R}^{r_1}$. A particular case of interest are functions of the form $v\colon \mathbb{R}^d\to\mathbb{R}^d, v(x) = (v_1(x),0,\ldots,0)^{\top}$ which map to zero except in the first output dimension. In this case, a functional TT representation is obtained by taking a (scalar valued) TT representation of $v_1$ and padding the first component of this representation with zeros for the other outputs. The TT representation of $v$ inherits the ranks of $v_1$ in this case (except for $r_0$), since all other components remain the same.

\begin{lemma}\label{lem:vTT_firstcomp}
    Assume $v\colon \mathbb{R}^d \to \mathbb{R}^d$, $d\in\mathbb{N}$, has the form $v(x) = (v_1(x),0,\ldots,0)^{\top}$ for a $v_1\colon \mathbb{R}^d\to\mathbb{R}$ with TT rank $\bm r = (r_1,\ldots,r_{d-1})$ and $r_0=r_d=1$. Then, $v$ can be represented by a TT with rank $\bm r$ and $r_0=d$, $r_d=1$.
\end{lemma}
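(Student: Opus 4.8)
The plan is to make the padding argument sketched just before the statement precise, and then to verify that the ranks produced are exactly $\bm r$ rather than merely bounded by it. First I would fix a functional TT representation of $v_1$ of the form \eqref{eq:ftt},
\[
    v_1(x) = \hat U_1(x_1)\,\hat U_2(x_2)\cdots \hat U_d(x_d),
\]
with $\hat U_1(x_1)\in\mathbb{R}^{1\times r_1}$, $\hat U_i(x_i)\in\mathbb{R}^{r_{i-1}\times r_i}$ for $2\le i\le d-1$, and $\hat U_d(x_d)\in\mathbb{R}^{r_{d-1}\times 1}$, where $\bm r=(r_1,\dots,r_{d-1})$ is by hypothesis the TT rank of $v_1$; equivalently, fix the associated cores $\bm U_i\in\mathbb{R}^{r_{i-1}\times n_i\times r_i}$ with $r_0=r_d=1$.

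Next I would define a new first core $\tilde{\bm U}_1\in\mathbb{R}^{d\times n_1\times r_1}$ by $\tilde{\bm U}_1(1,\cdot,\cdot)=\bm U_1(1,\cdot,\cdot)$ and $\tilde{\bm U}_1(\ell,\cdot,\cdot)=0$ for $\ell=2,\dots,d$; at the level of matrix-valued functions this means $\tilde U_1(x_1)\in\mathbb{R}^{d\times r_1}$ has first row $\hat U_1(x_1)$ and all other rows zero. Keeping $\bm U_2,\dots,\bm U_d$ unchanged, the contraction $\tilde U_1(x_1)\hat U_2(x_2)\cdots\hat U_d(x_d)$ is a vector in $\mathbb{R}^d$ whose $\ell$-th entry is the product of the $\ell$-th row of $\tilde U_1(x_1)$ with the $r_1\times 1$ matrix $\hat U_2(x_2)\cdots\hat U_d(x_d)$. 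For $\ell=1$ this entry equals $v_1(x)$, and for $\ell\ge 2$ it is $0$ because the corresponding row of $\tilde U_1(x_1)$ vanishes. Hence the contraction equals $(v_1(x),0,\dots,0)^{\top}=v(x)$, exhibiting $v$ as a vector-valued TT with $r_0=d$, internal ranks (elementwise) at most $\bm r$, and $r_d=1$.

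Finally I would argue that the internal ranks of this representation are not smaller than $\bm r$, hence equal to $\bm r$. If $v$ admits any vector-valued TT representation $v(x)=W_1(x_1)W_2(x_2)\cdots W_d(x_d)$ with $W_1(x_1)\in\mathbb{R}^{d\times\hat r_1}$, $W_i(x_i)\in\mathbb{R}^{\hat r_{i-1}\times\hat r_i}$ and $\hat r_d=1$, then contracting on the left with $e_1^{\top}$ gives $v_1(x)=e_1^{\top}v(x)=\bigl(e_1^{\top}W_1(x_1)\bigr)W_2(x_2)\cdots W_d(x_d)$, and $e_1^{\top}W_1(x_1)\in\mathbb{R}^{1\times\hat r_1}$ is a legitimate first core with left dimension $1$. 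So $v_1$ has a TT representation with internal ranks $\hat{\bm r}=(\hat r_1,\dots,\hat r_{d-1})$, whence $\hat{\bm r}\ge\bm r$ elementwise by minimality of the TT rank of $v_1$. Combined with the construction above, the representation built there has internal ranks exactly $\bm r$. There is no substantial obstacle here; the only point that deserves care is this last step, namely distinguishing ``representable with ranks $\le\bm r$'' from ``has TT rank $\bm r$,'' which is handled by recovering $v_1$ as the first output coordinate of $v$ and invoking that recovering a coordinate from the leading rank index cannot increase the internal ranks.
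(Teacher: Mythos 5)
Your proposal is correct and uses essentially the same construction as the paper: pad the first core of a minimal TT representation of $v_1$ with zero rows to obtain a $d\times r_1$ first core, leaving the remaining cores unchanged. The only addition is your final minimality argument (recovering $v_1$ by contracting with $e_1^{\top}$ to show the internal ranks cannot drop below $\bm r$), which the paper leaves implicit with the remark that the construction ``preserves all ranks, except $r_0$.''
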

\begin{proof}
    Assume the TT representation of $v_1$ is $v_1(x) = \hat{U}^{(1)}(x_1) \ldots \hat{U}^{(d)}(x_d)$ with $\hat{U}^{(i)}(x_i)\in\mathbb{R}^{r_{i-1}\times r_i}$, $r_0=r_d=1$. Define $\tilde{U}^{(i)}=\hat{U}^{(i)}$ for $i=2,\ldots,d$ and 
    \begin{equation*}
        \tilde{U}^{(1)}(x_1) = \begin{bmatrix}
            \hat{U}^{(1)}(x_1) \\ \bm 0 \\\vdots\\\bm 0
        \end{bmatrix} \in \mathbb{R}^{d\times r_1}.
    \end{equation*}
    Then, $v(x) = \tilde{U}^{(1)}(x_1)\ldots \tilde{U}^{(d)}(x_d)$ is a TT representation of $v$, which preserves all ranks, except $r_0$.
\end{proof}
This result enables us to represent vector-valued functions, where only the first output dimension is nonzero, as TTs. For later use, we state another immediate result regarding the rank of such functions with additional structure.

\begin{lemma}\label{lem:vtt_product}
    Assume $v\colon \mathbb{R}^{d +1} \to \mathbb{R}^{d +1}$, $d\in\mathbb{N}$, has the form $v(x) = (v_{2:d+1}(x_{2:d+1}) v_1(x_1),0_d)^{\top}$, where $x_{2:d+1}=(x_2,\ldots,x_{d+1}) $, $0_d$ is the zero vector in $\mathbb{R}^d$, $v_1\colon \mathbb{R}\to\mathbb{R}$, and $v_{2:d+1}\colon \mathbb{R}^{d}\to\mathbb{R}$ has TT rank $\bm r^* = (r^*_1,\ldots,r_{d-1}^*)$. 
    Then, $v$ can be represented by a TT with rank 
    \begin{equation*}
    \bm r = (r_1,\ldots,r_{d}) = (1,r^*_1,\ldots,r^*_{d-1})
    \end{equation*}
    and $r_0=d+1$, $r_{d+1}=1$.
\end{lemma}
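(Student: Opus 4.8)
The plan is to build the TT representation in two stages: first realise the scalar function $w(x) := v_1(x_1)\, v_{2:d+1}(x_{2:d+1})$ as a functional tensor train, and then pad it to a vector-valued tensor train by invoking \cref{lem:vTT_firstcomp}. Fix a TT representation of the $d$-variate function $v_{2:d+1}$ of rank $\bm r^* = (r_1^*,\ldots,r_{d-1}^*)$, say
\begin{equation*}
    v_{2:d+1}(x_2,\ldots,x_{d+1}) = \hat{W}_1(x_2)\,\hat{W}_2(x_3)\cdots \hat{W}_d(x_{d+1}),
\end{equation*}
with matrix-valued components $\hat{W}_j(x_{j+1}) \in \mathbb{R}^{r^*_{j-1}\times r^*_j}$ and the usual convention $r_0^* = r_d^* = 1$. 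Since $v_1(x_1)$ is scalar, it may be regarded as a $1\times 1$ matrix-valued function $\hat{U}_1(x_1) := v_1(x_1)\in\mathbb{R}^{1\times 1}$. Setting $\hat{U}_{i+1} := \hat{W}_i$ for $i=1,\ldots,d$ and multiplying out the matrix product gives
\begin{equation*}
    w(x) = v_1(x_1)\,v_{2:d+1}(x_{2:d+1}) = \hat{U}_1(x_1)\,\hat{U}_2(x_2)\cdots \hat{U}_{d+1}(x_{d+1}),
\end{equation*}
which is a TT representation of the $(d+1)$-variate scalar function $w$. Prepending the scalar core $\hat{U}_1$ introduces one new bond of dimension $1$ and leaves all remaining bonds of $v_{2:d+1}$ untouched, so the internal ranks of this representation are $(1,r_1^*,\ldots,r_{d-1}^*)$, with $r_0 = r_{d+1} = 1$.

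It then remains to pass from $w$ to the vector-valued function $v = (w,0_d)^\top\colon \mathbb{R}^{d+1}\to\mathbb{R}^{d+1}$. This is exactly the situation treated by \cref{lem:vTT_firstcomp} applied in dimension $d+1$ with the scalar function $w$ playing the role of ``$v_1$'' there: that result (equivalently, the zero-padding construction in its proof, which works from any given representation of the scalar function) produces a TT representation of $v$ with the same internal ranks $(1,r_1^*,\ldots,r_{d-1}^*)$ as $w$, and with $r_0 = d+1$ and $r_{d+1} = 1$. This is precisely the claimed rank tuple $\bm r = (r_1,\ldots,r_d) = (1,r_1^*,\ldots,r_{d-1}^*)$.

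There is no substantive obstacle; the only point requiring care is the index bookkeeping — tracking the shift by one in the variable and component labels caused by inserting the $x_1$-core, and observing that the leading ``$1$'' in $\bm r$ appears precisely because a univariate scalar factor contributes a rank-one bond. (If $v_1\equiv 0$, or if $v_{2:d+1}$ has strictly smaller ranks, the actual TT rank of $v$ may be smaller; as elsewhere in this section, the statement is to be read as asserting the existence of a representation with the stated bond dimensions.)
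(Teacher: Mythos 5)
Your proof is correct and follows essentially the same route as the paper: prepend $v_1(x_1)$ as a $1\times 1$ core to a rank-$\bm r^*$ TT representation of $v_{2:d+1}$ to obtain the scalar product with internal ranks $(1,r_1^*,\ldots,r_{d-1}^*)$, then invoke \cref{lem:vTT_firstcomp} to pad to the vector-valued function. Your version is in fact slightly more careful with the index bookkeeping (and with the remark that the construction gives an upper bound on the TT rank) than the paper's own two-line argument.
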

\begin{proof}
    Let $v_{2:d+1}(x_{2:d+1})=\hat{U}_1(x_2)\ldots \hat{U}_{d+1}(x_{d+1})$ with $\hat{U}_i(x_i)\in\mathbb{R}^{r^*_{i-1}\times r^*_i}$ be a TT representation of $v_{2:d+1}$. Then, $v_1(x_1) \hat{U}_1(x_2)\ldots \hat{U}_d(x_{d+1})$ is a TT representation of $v_1(x_1) v_{2:d+1}(x_{2:d+1})$. Since $v_1(x_1)\in\mathbb{R}^{1\times 1}$, the rank of this representation is $(1,r^*_1,\ldots,r^*_{d})$. The claim now follows with \cref{lem:vTT_firstcomp}.
\end{proof}

\section{Compositional Tensor Trains (CTT)}\label{sec:CTT}
TTs provide efficient representations for many high-dimensional functions, but their performance depends critically on a favourable ordering of variables and the presence of intrinsic low-rank structure. When these conditions are violated, TT ranks can grow rapidly and the format becomes ineffective.

This section develops \emph{Compositional Tensor Trains (CTT)} as a new representation model and establishes their efficiency and approximation power.
We first motivate composition by exhibiting functions that are provably awkward for plain TT yet become efficiently encoded via CTT.
We then formalize the new CTT format and show how standard function classes are encoded with controlled ranks (affine maps, univariate and multivariate polynomials, concatenation of functions), as well as classical deep neural networks.
The representation allows for the derivation of a universal approximation property for CTT under mild assumptions on the feature basis $\bm{\Phi}$.
Moreover, based on an error-propagation analysis, we give a compression algorithm of the layers with guarantees on the precision.



\subsection{Where TTs fail}\label{sec:TTs_fail}
We justify the compositional design by constructing targets whose coefficient tensors have large intermediate TT ranks, while the same targets admit low-rank \emph{compositions} of simple Euler layers, thus exposing the depth advantage exploited by CTT.

\subsubsection{Markov processes}\label{sec:markov} As a first example where TTs can fail, consider a discrete time Markov process $X = (X_1, \dots, X_d)$, as in \cite[Section 3.2.2]{grelier2022learning} whose density is given by
\begin{equation*}
    f(x) = f_{d|d-1}(x_d|x_{d-1}) \dots f_{2|1}(x_2|x_1) f_1(x_1),
\end{equation*}
where $f_1$ is the density of $X_1$ and $f_{i|i-1}(\cdot | x_{i-1})$ is the conditional density of $X_i$ knowing $X_{i-1} = x_{i-1}$. Let $m_i := \rank(f_{i|i-1})$. Then, the TT ranks of $f$ are
\begin{equation*}
    \bm{r} = (1, m_2, m_3, \dots, m_d, 1).
\end{equation*}
If the variables are reordered, for example  $\tilde{x} = (x_{\sigma(1)}, \dots, x_{\sigma(d)})=: P_\sigma(x)$ for some permutation $\sigma$, the TT ranks of the function $\tilde f(\tilde x) = f(P_\sigma^{-1}(\tilde x))$ can grow exponentially with the dimension $d$. For instance, consider the permutation
\begin{equation*}
    \sigma = \left(1, 3, 5, \dots, 2\left\lfloor\frac{d+1}{2}\right\rfloor -1, 2, 4, 6, \dots 2\left\lfloor\frac{d}{2}\right\rfloor\right)  
\end{equation*}
which places all odd-indexed variables first, followed by the even-indexed variables. The TT ranks then become
\begin{equation*}
    \begin{aligned}
        \tilde{r}_k = \begin{cases}
            \prod_{j=2}^{\min(2k,d)} m_j, &1 \leq k \leq \lceil d/2 \rceil,\\
            \prod_{j=2(k-\lceil d/2 \rceil)+2}^d m_j, &\lceil d/2 \rceil + 1 \leq k \leq d,
        \end{cases}
    \end{aligned}
\end{equation*}
which grow exponentially with $d$. This growth occurs because the Markov property is local: each variable depends only directly on its immediate predecessor. When variables are reordered non-sequentially, dependencies become effectively non-local in the TT decomposition, requiring large ranks to capture the induced correlations.

We will see later that CTTs can encode such functions efficiently, with one layer encoding the permutation map $P_\sigma^{-1}: \mathbb{R}^d\to \mathbb{R}^d$ (see Section \ref{sec:encoding_linear_maps}), and one layer encoding $f$ in TT format. 

\subsubsection{Gaussian densities}\label{sec:gaussian_theory} 

As a second example, consider  the case of the Gaussian function
\begin{equation}
    f_\Gamma(x) := \exp\left(-\frac 1 2 x^\top \Gamma x\right),
    \label{eq:gaussian}
\end{equation}
where $\Gamma\in\mathbb{R}^{d\times d}$ is a symmetric positive definite precision matrix. 
In the trivial case where $\Gamma = \diag(\gamma_1,\dots,\gamma_d)$ is diagonal, $f_\Gamma$ immediately factorises to
$
    f_\Gamma(x) = \prod_{i=1}^d \exp\left(-\frac 1 2 \gamma_i x_i^2 \right), 
$
which is a rank 1 function. However, when the precision matrix $\Gamma$ is non-diagonal, the situation changes drastically, depending on the matrix ranks of the sub-diagonal blocks.
The approximation of $f_\Gamma$ by TTs is studied in~\cite{rohrbach2022rank}.
For example, the average maximal TT rank required to approximate a $15$-dimensional Gaussian with sub-diagonal precision blocks of rank $4$ up to a relative $L^2$ accuracy of $10^{-4}$ is experimentally shown to be close to $100$.
In the most general case, the ranks of the sub-diagonal blocks grow linearly in $d$, and the TT ranks to achieve an accuracy $\varepsilon$ grow exponentially in $d$. 

\paragraph{The flow perspective.} As we show now, compositions of tensor trains is a  better suited format to approximate a Gaussian. First, we define a \textit{lift} from $\R^d$ to $\R^{d+1}$ by 
$
    \mathfrak{L}(x) = \begin{pmatrix}
        1 \\ x
    \end{pmatrix}.
    $
Second, we define a flow $\Phi_t : \R^{d+1} \to \R^{d+1}$ by
\begin{equation}\label{eq:cont_flow}
\begin{aligned}
    \partial_t \Phi_t(h) &= \begin{bmatrix}
        (-\frac 1 2 \Phi_t(h)_{2:d+1}^\top \Gamma \Phi_t(h)_{2:d+1})\Phi_t(h)_1& 0_d
    \end{bmatrix},\\
    \Phi_0(h) &= h
\end{aligned}
\end{equation}
for any $h\in \R^{d+1}$. Choosing $h = \mathfrak{L}(x)$, we get
\begin{equation*}
    \Phi_1(\mathfrak{L}(x)) = \begin{bmatrix}
        f_\Gamma(x) & x
    \end{bmatrix}.
\end{equation*}
Indeed, it can be easily shown that for $0 \leq t \leq 1$, we have $\Phi_t(\mathfrak{L}(x)) = \begin{bmatrix}
    \exp\left(-\frac{1}{2} t x^\top \Gamma x\right) & x
\end{bmatrix}$.
We recover the Gaussian function by simply using the projection onto the first variable, i.e. $\mathfrak{R}\colon \R^{d+1}\to \R, x\mapsto x_1$, leading to 
\begin{equation*}
    f_{\Gamma}(x) = \mathfrak{R}\circ \Phi_1 \circ \mathfrak{L}(x).
\end{equation*}
%
%
%
%
While a similar construction can be used to represent much more general functions by continuous flows, we want to focus on the Gaussian here. The reason for this is that the vector field on the right hand side of \eqref{eq:cont_flow} has a provably low TT rank, as we show in the following.

\paragraph{Flows with provably low rank.} First, note that by \cref{lem:vtt_product}, the TT rank of the right-hand side of \eqref{eq:cont_flow} is $(1, \bm r)$, where $\bm r$ is the TT rank of the quadratic form $\frac{1}{2}y^\top \Gamma y$, which is bounded by $\frac{d}{2}$ \cite[Lemma D.1]{sommer2024generative}.
By using polynomial basis functions up to degree $2$, i.e. $\Phi=\{1, x, x^2\}$, we can get an exact TT representation of the right-hand side with a  complexity in $\mathcal{O}(d^3)$.



\paragraph{Time discretization of the flow.} The flow \eqref{eq:cont_flow} can be generalized to the form
\begin{equation}\label{eq:general_flow}
    \partial_t \Phi_t(h) = f_t(\Phi_t(h)),
\end{equation}
where $f_t\colon \mathbb{R}^{d+1}\to\mathbb{R}^{d+1}$ for each $t\in[0,1]$, permitting explicit time dependence of $f$. Any time discretization of \eqref{eq:general_flow}, e.g. by Euler or Runge-Kutta methods, leads to a compositional structure in terms of $f_t$. For a number of intervals $N\in\mathbb{N}$ and time points $t_0=0,t_n=\frac{n}{N}$, $n=1,\ldots,N$ such methods compute  approximations $\widehat{\Phi}^N(h) \approx \Phi_1(h)$ by various forms of compositions of $f_t$. For two obvious examples, with $\tau=\frac{1}{N}$, Euler's method is defined by $\widehat{\Phi}^N=(I+\tau f_{t_{N-1}})\circ (I+\tau f_{t_{N-2}}) \circ \ldots \circ (I+\tau f_{t_1}) \circ (I+\tau f_{0})$, whereas Heun's method is defined by $\widehat{\Phi}^N = (I+\frac{\tau}{2} (f_{t_{N-1}} + f_{t_N}\circ(I + \tau f_{t_{N-1}}))) \circ \ldots$
If the right-hand side of \eqref{eq:general_flow} is a TT as is the case of the flow \eqref{eq:cont_flow}, such a discretization immediately defines an approximation of the function $\Phi_1(h)$ as a compositional tensor train (CTT). For many target functions it is not clear whether a suitable flow of the form \eqref{eq:general_flow} exists and if it does, if the right-hand side $f_t$ has low-rank structure. For such cases, we need to define a general CTT architecture as well as a method to train it given the target in a way that it can adaptively uncover potential low rank structures.


\subsection{Compositional architectures}\label{sec:comp_architectures}

In the following we introduce the CTT architecture proposed in this work. It is inspired by time discretization of low-rank flows such as for the Gaussian in \eqref{eq:cont_flow}. Furthermore, as we show in the next section, it comes with  optimization procedures exploiting efficient tensor algebra.




\begin{definition}[Compositional tensors]\label{def:flow_CTT}
    Given linear operators $\mathfrak{L} : \R^d \to \R^p$ and $\mathfrak{R} : \R^p \to \R^{d_o}$, with $d,p,{d_o} \in \N$ and $p \geq d$, called respectively \emph{lift} and \emph{retraction}, we denote a function $v : \R^d \to \R^{d_o}$ a \emph{compositional tensor (CT)} with $L \in \N$ layers and univariate basis $\Phi := \{\phi_j : \R \to \R\}_{j=1}^n$ with $n \in \N$ if there exist functional tensors $\psi_1,\dots,\psi_L$ from $\R^p$ to $\R^p$, each with the same univariate basis $\Phi$ and a coefficient tensor $\bm\psi_k\in\R^{p\times n\times\ldots\times n}$, defined by
    \begin{equation}
        \psi_k : x \mapsto \Big( \sum_{i_1,\ldots,i_p}\bm \psi_{k}(j,i_1,\ldots,i_p)\phi_{i_1}(x_1)\dots \phi_{i_p}(x_p)\Big)_{1\le j \le p},
        \label{eq:coefficient_tensor}
    \end{equation}
    such that
    \begin{equation}
        v(x) = \mathfrak{R} \circ (\Id + \psi_L) \circ \dots \circ (\Id + \psi_1) \circ \mathfrak{L}(x), \quad x \in \R^d.
        \label{eq:ctt}
    \end{equation}
    Additionally, if the tensors $\psi_1,\dots,\psi_L$ have TT-ranks $\bm{r}_1,\dots,\bm{r}_L$ respectively, we call $v$ a \emph{compositional tensor-train (CTT)} with ranks $\bm{r}_1,\dots,\bm{r}_L$.

    The set of \emph{compositional tensors} with basis $\Phi$ and with $L$ layers is denoted $\mathcal{CT}^L(\Phi; \mathfrak{L}, \mathfrak{R})$. If the tensors have TT-ranks bounded by $\bm{r}$, then the corresponding set is denoted $\mathcal{CTT}_{ \bm{r}}^L(\Phi; \mathfrak{L}, \mathfrak{R})$.
\end{definition}
In the following, for simplicity we omit $\mathfrak{L}$ and $\mathfrak{R}$ in the notations and write
\begin{equation*}
    \mathcal{CT}(\Phi) := \bigcup_{L \geq 1} \mathcal{CT}^L(\Phi), \qquad \mathcal{CTT}_{ \bm{r}}(\Phi) := \bigcup_{L \geq 1} \mathcal{CTT}_{ \bm{r}}^L(\Phi).
\end{equation*}

\paragraph{CTTs as  DNNs.}
Note that CTTs and DNNs are closely connected. Indeed, a function $u$ in CTT format admits a recursive representation, letting 
 $u_0(x) =  \mathfrak{L}(x)$, 
\begin{align*}
    u_{k+1}(x) = u_k(x) + \psi_k(u_k(x)), \quad 0\le k \le L-1,
\end{align*}
and $u(x) = \mathfrak{R}(u_L(x))$. 
Assuming $1,\Id \in \Phi$ and with the corresponding feature map $\bm{\Phi} : \mathbb{R}^p \to \mathbb{R}^{p \times n\times \dots \times n}$, we can introduce a linear map $\bm{E} : \mathbb{R}^{pn^p} \to \mathbb{R}^p$ such that $\bm{E} \operatorname{vec}(\bm{\Phi}(y)) = y$. Then letting 
$v_k(x) = \operatorname{vec}(\bm{\Phi}(u_k(x))) \in 
 \mathbb{R}^{p n^p}$
 and   
$\psi_k(y) = \langle \bm{\psi}_k , \bm{\Phi}(y)\rangle_F$, 
we have that 
$u(x) = \mathfrak{R}( \bm{E}  v_L(x) )$ with the recursion 
$$
v_{k+1}(x) = \bm{\Phi}( \bm{A}_k v_k(x)), \quad 0\le k \le L-1, $$
where $\bm{A}_k = \bm{E} + \bm{B}_k$, with a linear map $\bm{B}_k :\mathbb{R}^{pn^p} \to \mathbb{R}^p$ such that 
$ \langle \bm{\psi}_k , \Phi(y) \rangle_F = \bm{B}_k \operatorname{vec}(\bm{\Phi}(y)) $.
Therefore, a CTT with basis $\Phi$ can be seen as a DNN with 
a latent space of very high dimension $p n^p$, highly structured linear maps $\bm{A}_k : \mathbb{R}^{pn^p} \to \mathbb{R}^p$ and 
nonlinear activation function $y\mapsto \operatorname{vec}(\bm{\Phi}(y)) $ associated with a tensorization of the chosen basis $\Phi$.

\subsection{Encoding of classical function classes} 

\emph{Compositional tensors} possess the ability to exactly encode specific classes of functions, a property that arises not only from their inherent compositional architecture but also from the structured tensor representation employed at each layer of the model. This allows representation of complex mappings in terms of simpler functions. In particular, the expressive power of these tensors depends crucially on the choice of the basis function $\Phi$. By selecting an appropriate basis (also called features and hence feature space, respectively) such as $\{1, x\}$ or $\{1,\operatorname{ReLU}\}$, the compositional tensor can represent exactly approximation tools such as polynomials or neural networks.

In this section, we provide explicit constructions that illustrate how compositional tensors can encode these function classes exactly. We begin by giving explicit encoding of functions such as linear maps, ridge function, and polynomials. Then, we show that with an appropriate choice of $\Phi$, we can also exactly represent neural networks. This explicit encoding highlights both the theoretical expressivity and the practical utility of compositional tensors in approximating high-dimensional functions.

\subsubsection{Linear maps, coordinate transformations}\label{sec:encoding_linear_maps}

The subsequent result shows that linear affine maps can be efficiently encoded in the TT format. 

\begin{proposition}[Linear affine maps written in the tensor-train format]\label{proposition:linear-transformation}
    The linear map $x \in \R^d \mapsto \langle a, x\rangle$ with $a\in \R^d$ is an additive function, and therefore can be rewritten as a tensor train with ranks $(2,\ldots, 2)$.  For $\bm A \in \R^{m\times d}$, $b\in \R^m$, the linear affine map $x \in \R^d \mapsto \bm A x + b\in \R^{m}$ has a tensor train representation with ranks at most $d$. 
\end{proposition}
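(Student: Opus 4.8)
The plan is to establish both claims by exhibiting explicit functional tensor-train representations and checking that the product of component matrices telescopes to the target function; throughout it suffices that each univariate space $\mathcal V^{(k)}_{n_k}$ contains the constants and the identity $x\mapsto x$, so that affine component matrices are admissible.

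\emph{First claim.} Writing $\langle a,x\rangle=\sum_{k=1}^d a_k x_k$, I would use the classical running-sum ("carriage") decomposition:
\[
\hat U_1(x_1)=\begin{bmatrix} a_1 x_1 & 1\end{bmatrix},\qquad
\hat U_k(x_k)=\begin{bmatrix} 1 & 0\\ a_k x_k & 1\end{bmatrix}\ (2\le k\le d-1),\qquad
\hat U_d(x_d)=\begin{bmatrix} 1\\ a_d x_d\end{bmatrix}.
\]
A one-line induction gives $\hat U_1(x_1)\cdots\hat U_k(x_k)=\begin{bmatrix}\sum_{i\le k}a_i x_i & 1\end{bmatrix}$ for $k<d$, hence the full product equals $\sum_{i=1}^d a_i x_i$. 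Every component is affine in its variable and the internal ranks are $r_1=\dots=r_{d-1}=2$ (with $r_0=r_d=1$), which is the claimed representation.

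\emph{Second claim.} View $x\mapsto\bm A x+b$ as the vector-valued map $g(x)=b+\sum_{k=1}^d c_k x_k\in\R^m$, where $c_k\in\R^m$ denotes the $k$-th column of $\bm A$. The key idea is that after reading $x_1,\dots,x_k$ one need not propagate the full $m$-dimensional partial sum together with a constant (which would force internal rank $m+1$); it is enough to carry the accumulated vector $b+\sum_{i\le k}c_i x_i$ together with the still-unused columns $c_{k+1},\dots,c_d$. Concretely, take $r_0=m$, $r_k=d-k+1$ for $1\le k\le d-1$, $r_d=1$, and set
\[
\hat U_1(x_1)=\begin{bmatrix} b+c_1 x_1 & c_2 & \cdots & c_d\end{bmatrix}\in\R^{m\times d},\qquad
\hat U_d(x_d)=\begin{bmatrix}1\\ x_d\end{bmatrix},
\]
while for $2\le k\le d-1$ the core $\hat U_k(x_k)\in\R^{(d-k+2)\times(d-k+1)}$ has first column $(1,x_k,0,\dots,0)^\top$ and its remaining $d-k$ columns equal to the last $d-k$ standard basis vectors of $\R^{d-k+2}$. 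An induction then shows $\hat U_1(x_1)\cdots\hat U_k(x_k)=\begin{bmatrix} b+\sum_{i\le k}c_i x_i & c_{k+1} & \cdots & c_d\end{bmatrix}$, so the full product is $g(x)$; again every component is affine in its variable, and $r_1,\dots,r_{d-1}\le d$.

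\emph{Main obstacle.} The only nonroutine step is the choice of what to propagate in the second claim: the naive "partial sum plus constant" carriage yields internal rank $m+1$, useless when $m\gg d$. Routing the unused columns of $\bm A$ through the chain instead is what implicitly exploits $\rank(\bm A)\le d$ and brings the internal ranks down to at most $d$. Once the cores are fixed, both proofs reduce to an immediate matrix-product telescoping.
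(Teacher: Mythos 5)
Your proof is correct, and both of your constructions telescope as claimed; the route differs from the paper's in two places, though the conclusions agree. For the first claim the paper does not build cores at all: it observes that the $k$-th unfolding of $\langle a,x\rangle=\sum_{i\le k}a_ix_i+\sum_{i>k}a_ix_i$ is a sum of two separable terms, hence has rank $2$ at every cut, which yields the TT ranks $(2,\ldots,2)$ abstractly; your running-sum cores make the same bound constructive. For the second claim both proofs are explicit, but the cores differ: the paper keeps all $d$ columns of $\bm A$ alive through the whole train, using $\hat U_1(x_1)=\begin{pmatrix} x_1a_1+b & a_2 & \ldots & a_d\end{pmatrix}$, $\hat U_k(x_k)=\diag(1,\ldots,x_k,\ldots,1)$ for the middle cores and the final vector $(1,\ldots,1,x_d)^\top$, so every internal rank equals $d$; you instead fold the accumulated partial sum into the first slot and discard each column once it has been consumed, yielding the decreasing rank profile $(d,d-1,\ldots,2)$. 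Your variant is marginally sharper (same worst-case bound $d$, smaller ranks deeper in the train), at the cost of cores whose shapes vary with $k$, whereas the paper's uniform diagonal cores are simpler to state. One small caveat: the bound $d$ in your construction comes from carrying the $d$ columns, i.e.\ from the number of input variables, not from $\rank(\bm A)\le d$ as your closing remark suggests; the bound holds irrespective of the rank of $\bm A$.
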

\begin{proof}
The first claim results from the fact that the $k$-th unfolding of $\langle a, x\rangle =\sum_{i=1}^k a_i x_i +\sum_{i=k+1}^d a_i x_i  $ has rank $r_k = 2$, for all $1\le k \le d-1$.
Letting $a_i$ be the $i$-th column of $\bm A$, a suitable representation is given by 
$\hat U_1(x_1) = \begin{pmatrix} x_1 a_1 + b & a_2 & \ldots & a_d  \end{pmatrix}\in \R^{m\times d}$, $\hat U_k(x_k) = \bm I_d + (x_k-1)e_k e_k^\top = \text{diag}(1, \ldots, x_k , \ldots, 1) \in \R^{d \times d}$ for $2\le k \le d-1$ and $\hat U_d(x_d) = (1, \ldots, 1, x_d)^\top \in \R^{d \times 1 }$.
\end{proof}
Since the above result also holds for the linear map {$h\mapsto -h + \bm A h$}, we get the following corollary.

\begin{corollary}\label{cor:linear_trafo_lift}
    Any linear affine transformation $h \in \R^p\mapsto \bm Ah + b \in \R^p$ in the lifted space, with $\bm A\in \R^{p\times p}$ and $b\in \R^p$  can be represented by a single CTT layer with rank at most $p$.
\end{corollary}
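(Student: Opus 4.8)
The plan is to recognize that a single CTT layer, applied in the lifted space $\R^p$, acts as $\Id + \psi_1$ for some functional tensor $\psi_1 : \R^p \to \R^p$, so representing $h \mapsto \bm A h + b$ by one layer is equivalent to exhibiting $\psi_1$ with $\psi_1(h) = (\bm A - \bm I_p)h + b$ as a functional tensor train of TT rank at most $p$. Since $\bm A - \bm I_p \in \R^{p\times p}$ and $b \in \R^p$, this $\psi_1$ is again an affine map from $\R^p$ to $\R^p$, so the whole statement reduces to applying \cref{proposition:linear-transformation} with $m = d = p$ and with $\bm A - \bm I_p$ in place of $\bm A$.

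Concretely, I would fix $L=1$ and set $\psi_1 := \big(h \mapsto (\bm A - \bm I_p)h + b\big)$, so that $(\Id + \psi_1)(h) = h + (\bm A - \bm I_p)h + b = \bm A h + b$, which is exactly the target map. Then I would invoke \cref{proposition:linear-transformation}: the vector-valued affine map $h \in \R^p \mapsto (\bm A - \bm I_p)h + b \in \R^p$ has a tensor-train representation with TT ranks at most $p$, given explicitly by the cores in that proof --- a first core $\hat U_1(h_1) = \begin{pmatrix} h_1 a_1 + b & a_2 & \cdots & a_p\end{pmatrix}\in\R^{p\times p}$ where $a_i$ is now the $i$-th column of $\bm A - \bm I_p$, interior cores of the form $\hat U_k(h_k) = \diag(1,\ldots,h_k,\ldots,1)\in\R^{p\times p}$, and a last core $\hat U_p(h_p) = (1,\ldots,1,h_p)^\top$. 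These cores only use the univariate features $1$ and $\Id$, which are assumed available in $\Phi$, so $\psi_1$ is a legitimate functional tensor with basis $\Phi$ and coefficient tensor in $\R^{p\times n\times\cdots\times n}$ of TT rank at most $p$, as required by \cref{def:flow_CTT}.

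There is essentially no hard step here; the only point needing care is that subtracting $\Id$ from $\bm A$ does not break the rank bound, and that is precisely the content of the remark preceding the corollary: $h \mapsto -h + \bm A h = (\bm A - \bm I_p)h$ is still a pure linear map covered by \cref{proposition:linear-transformation}, hence of TT rank at most $p$ exactly as for $h\mapsto \bm A h$, and absorbing the constant shift $b$ into the first core (as in the affine case of the proposition) leaves this bound unchanged. I would then conclude that $h\mapsto \bm A h + b$ equals $\Id + \psi_1$ for a single admissible CTT layer $\psi_1$ of rank at most $p$, which is the claim.
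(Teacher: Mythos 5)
Your proof is correct and follows the same route as the paper: the paper's justification is precisely the one-line observation preceding the corollary, namely that a single layer computes $\Id+\psi_1$, so one takes $\psi_1(h)=(\bm A-\bm I_p)h+b$ and applies \cref{proposition:linear-transformation} with $m=d=p$. Your write-up simply makes the explicit cores and the feature requirement $\{1,\Id\}\subset\Phi$ more visible than the paper does.
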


The above result shows that a CTT format with $2$ layers can represent functions of the form $u(\bm A  x)$, with $u$ represented in TT format.  
Consequently, a CTT can encode linear coordinate transformations. 
This solves a main difficulty of the TT format (mentioned in Section \ref{sec:markov}), which  is the choice of a good ordering of the variables, usually addressed using prior information on the function ~\cite{dektorTensorRankReduction2023} or stochastic optimization procedures \cite{GNC19,grelier2022learning,Michel22}.

\subsubsection{Polynomials}

The following result shows that univariate polynomials can be represented exactly, provided we use a suitable lift. 
\begin{proposition}
    Let $d \in \N$, and $(a_0, \dots, a_d) \subset \R$. We consider the polynomial
    \begin{equation*}
        g : x \mapsto a_0 + a_1 x + \dots + a_d x^d.
    \end{equation*}
    Then,  letting $\mathfrak{L} : x \mapsto (0, x)$, $\mathfrak{R} : (x,y) \mapsto x$ and
    \begin{align*}
        f_1 : (x,y) &\mapsto (a_d, 0),\\
        f_k : (x,y) &\mapsto (a_{d-k+1} + x(y-1), 0), \quad 2 \le k \le d+1,
    \end{align*}
   it holds that
    \begin{equation*}
        \mathfrak{R} \circ (\Id + f_{d+1}) \circ \dots \circ (\Id + f_1) \circ \mathfrak{L} = g.
    \end{equation*}
    This shows that any \emph{univariate polynomial} can be represented by a \emph{compositional tensor} with $d+1$ layers and with the basis $\Phi=\{1, \Id\}$.
\end{proposition}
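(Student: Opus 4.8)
The plan is to verify by a straightforward induction on the layers that the iterated map $(\Id + f_k) \circ \cdots \circ (\Id + f_1) \circ \mathfrak{L}$ builds up the polynomial $g$ via the usual Horner scheme, carrying the running Horner value in the first coordinate and leaving the input $x$ untouched in the second coordinate. Concretely, write $u_0(x) = \mathfrak{L}(x) = (0, x)$ and $u_k(x) = (\Id + f_k)(u_{k-1}(x))$ for $1 \le k \le d+1$. I claim that $u_k(x) = (p_k(x), x)$, where $p_0 = 0$, $p_1 = a_d$, and $p_k(x) = a_{d-k+1} + x\, p_{k-1}(x)$ for $2 \le k \le d+1$; these are exactly the partial Horner polynomials, so that $p_{d+1}(x) = a_0 + a_1 x + \cdots + a_d x^d = g(x)$.

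First I would check that the second coordinate is preserved: since each $f_k$ outputs $(\,\cdot\,, 0)$, the map $\Id + f_k$ acts as the identity on the second coordinate, so by induction the second coordinate of $u_k(x)$ remains $x$ for all $k$. Next I would compute the first coordinate. For $k=1$: $(\Id + f_1)(0,x) = (0,x) + (a_d, 0) = (a_d, x)$, giving $p_1 = a_d$ as claimed. For the inductive step with $2 \le k \le d+1$, assuming $u_{k-1}(x) = (p_{k-1}(x), x)$, we get $f_k(u_{k-1}(x)) = f_k(p_{k-1}(x), x) = (a_{d-k+1} + p_{k-1}(x)\,(x - 1), 0)$, so the first coordinate of $u_k(x)$ is $p_{k-1}(x) + a_{d-k+1} + p_{k-1}(x)(x-1) = a_{d-k+1} + x\, p_{k-1}(x) = p_k(x)$, as required. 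Finally, applying $\mathfrak{R}$ to $u_{d+1}(x) = (g(x), x)$ returns $g(x)$, which proves the displayed identity. It remains only to observe that each $f_k$ is a functional tensor in the basis $\Phi = \{1, \Id\}$: $f_1$ is constant, and $f_k(x,y) = (a_{d-k+1} - x + xy, 0)$ is an affine-plus-bilinear function of $(x,y)$, hence lies in $\Span\{1, \Id(x), \Id(y), \Id(x)\Id(y)\} \otimes \{e_1\}$, so it admits a coefficient tensor in $\R^{2 \times 2 \times 2}$; therefore the whole composition has the form \eqref{eq:ctt} with $d+1$ layers, $p = 2$, and basis $\Phi = \{1, \Id\}$.

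There is no real obstacle here — the argument is a routine induction. The only point requiring a moment's care is the bookkeeping in the Horner recursion, namely matching the index shift $a_{d-k+1}$ in layer $k$ with the claim $p_{d+1} = g$; one should double-check the two boundary cases $k=1$ (where the $x(y-1)$ term is deliberately absent) and $k=d+1$ (where the constant added is $a_0$). I would present the induction cleanly with the invariant $u_k(x) = (p_k(x), x)$ stated up front, which makes the verification transparent.
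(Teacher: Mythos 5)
Your proof is correct and follows essentially the same route as the paper's: an induction on the layers showing the invariant that the state after $k$ layers is (partial Horner value, $x$), which is exactly the paper's argument with $p_k = b_{d-k+1}$. The extra closing observation that each $f_k$ has a coefficient tensor in $\R^{2\times 2\times 2}$ over $\Phi=\{1,\Id\}$ is a welcome addition but does not change the approach.
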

\begin{proof}
    The proof is based on the Horner's method which rewrites the evaluation $g(x)$ as the recursion
    \begin{align*}
        b_d &= a_d, \quad b_{k} = a_{k} + b_{k+1}x, \quad k=d-1, d-2, \dots, 1.
    \end{align*}
    To show that indeed $\mathfrak{R} \circ (\Id + f_{d+1}) \circ \dots \circ (\Id + f_1) \circ \mathfrak{L} = g$, we only have to show that for any $k \geq 1$ and $x \in \R$, we have
    \begin{equation*}
        \langle (\Id + f_k) \circ \dots \circ (\Id + f_1) \circ \mathfrak{L}(x), e_1\rangle = b_{d-k+1}.
    \end{equation*}
    Then, since $\mathfrak{R}$ is only the projection onto the first variable, the result follows.

    First, for $k=1$ we have
    \begin{align*}
        (\Id + f_1) \circ \mathfrak{L}(x) &= (0,x) + (a_d, 0)= (a_d, x)= (b_d, x).
    \end{align*}
    Next, suppose that we have $(\Id + f_k) \circ \dots \circ (\Id + f_1) \circ \mathfrak{L}(x) = (b_{d-k+1}, x)$ for $1 \leq k \leq p$. Then, since $f_{k+1} : (x,y) \mapsto (a_{d-k} + x(y-1), 0)$, we have
    \begin{align*}
        (\Id + f_{k+1}) \circ \dots \circ (\Id + f_1) \circ \mathfrak{L}(x) &= (a_{d-k} + b_{d-k+1}x, x)= (b_{d-k}, x),
    \end{align*}
    which concludes the proof.
\end{proof}

Exploiting the fast exponentiation offered by the compositional format, we give an explicit encoding of a (sparse) multivariate polynomial. Let
\begin{equation*}
    P(x_1,\dots,x_d) = \sum_{\alpha \in \Lambda} C_\alpha x^\alpha, \quad x^\alpha = \prod_{j=1}^d x_j^{\alpha_j},
\end{equation*}
where $\Lambda \subset \N^d$ contains the set of multi-indices corresponding to non-zero coefficients $C_\alpha$ in the monomial basis. Define the maximal degree for each variable $x_j$ by
\begin{equation}\label{eq:Nj}
    N_j := \max \{\alpha_j : \alpha \in \Lambda \} \in \N, \quad j = 1\dots d.
\end{equation}
The case $\Lambda = \emptyset$ is trivial, and we do not consider it.

\begin{proposition}[Encoding of multivariate polynomials]\label{proposition:encoding-multivariate-polynomial}
    Let $P$ be a multivariate polynomial in $d$ variables, $\Lambda \subset \N^d$ the set of multi-indices associated with its non-zero coefficients. Suppose that $\Phi = \{1, \Id\}$. Let $q \geq 2$ be the number of \emph{workspace slots} and $p = d+2+q$.  Then, there exists a lift $\mathfrak{L} : \R^d \to \R^p$ and a retraction $\mathfrak{R} : \R^p \to \R$, respectively defined by 
    \begin{align*}
            \mathfrak{L}(x) &= (0,x_1,\dots,x_d,0,\dots,0), \quad 
        \mathfrak{R}(y) = y_1,
    \end{align*}
    and $L$ functions ${\psi}_1,\dots \psi_L$ in TT format such that 
     \begin{align*}
        P &= \mathfrak{R} \circ (\Id + \psi_L) \circ \dots \circ (\Id+\psi_1) \circ \mathfrak{L},\end{align*}
        with 
        \begin{align*}
        L &= |\Lambda|(2+d) + \sum_{\alpha \in \Lambda}\sum_{j=1}^d \lfloor \log_q(\alpha_j+1)\rfloor =\mathcal{O}\left(d|\Lambda|\lfloor \log_q(N_{\max}+1)\rfloor\right),
        \end{align*}
    where $N_{\max} := \max_j N_j$.
    Moreover, the number of non-zero parameters in the encoding is $\mathcal{O}(|\Lambda|d[1 + \lfloor \log_q(N_{\max}+1) \rfloor])$ and the number of parameters is $\mathcal{O}(L(d+q))$. The TT ranks of the functions ${\psi}_\ell$ are bounded by $\bm{r}=(p,2,\dots,2)$.
\end{proposition}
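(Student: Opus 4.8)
The plan is to build the encoding monomial-by-monomial using a small fixed set of workspace slots, mimicking the structure already used for the univariate case but now with an explicit "accumulator + product register + exponentiation gadget". The idea is to maintain an invariant: after processing the first $m$ multi-indices $\alpha^{(1)},\dots,\alpha^{(m)}$ of $\Lambda$ (in some fixed enumeration), the state $y \in \R^p$ satisfies $y_1 = \sum_{i\le m} C_{\alpha^{(i)}} x^{\alpha^{(i)}}$, the slots $y_2,\dots,y_{d+1}$ still hold $x_1,\dots,x_d$ untouched, and the $q$ workspace slots $y_{d+2},\dots,y_{p}$ are all zero. To process one monomial $C_\alpha x^\alpha$, I would: (i) load the constant $C_\alpha$ into a workspace slot via a layer of the form $(x,y)\mapsto(C_\alpha,0)$ acting on that slot; (ii) for each variable $j$ with $\alpha_j>0$, multiply the running product by $x_j^{\alpha_j}$ using a fast-exponentiation routine that writes $x_j$ into a second workspace slot and repeatedly squares/multiplies, which takes $\lfloor\log_q(\alpha_j+1)\rfloor$ (approximately) layers per variable since $q\ge 2$ workspace slots let us hold intermediate powers; (iii) add the finished term into slot $1$ and zero out all workspace slots again. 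Each elementary step is a map $(x,y)\mapsto(\text{quadratic in two coordinates},0,\dots)$, which lies in $\mathcal{CT}^1(\{1,\Id\})$ because the product $y_a y_b$ is represented in the tensor-product basis $\{1,\Id\}^{\otimes p}$ with a coefficient tensor whose first unfolding (the "output index vs. the rest") has rank at most $p$, and all other unfoldings have rank at most $2$ (the monomial $y_a y_b$ being a bilinear form, handled exactly as in \cref{proposition:linear-transformation} and the univariate polynomial proposition).

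The key steps, in order, are: (1) fix an enumeration of $\Lambda$ and state the loop invariant precisely; (2) describe the three sub-blocks (load constant; multiply by each $x_j^{\alpha_j}$ via $q$-ary fast exponentiation; accumulate and clear) and verify each is a single CTT layer with $\Phi=\{1,\Id\}$; (3) check the layer count: $|\Lambda|$ monomials each costing $2$ layers for load+accumulate, $d$ layers of "clear"-type bookkeeping (or fold the clearing into the accumulate step — the stated count $|\Lambda|(2+d)$ suggests one clear-layer per variable slot touched), plus $\sum_{\alpha}\sum_j \lfloor\log_q(\alpha_j+1)\rfloor$ layers for the exponentiations, giving exactly the stated $L$; (4) count parameters: each layer's coefficient tensor is sparse, touching only $O(d+q)$ of the $p$ slots nontrivially, and only $O(|\Lambda|d[1+\lfloor\log_q(N_{\max}+1)\rfloor])$ of these are nonzero in total, while the ambient parameter count is $O(L(d+q))$; (5) bound the TT ranks of each $\psi_\ell$ by $(p,2,\dots,2)$ — the first rank is $p$ because the output is a vector in $\R^p$ and the worst layer (an affine-type update as in \cref{cor:linear_trafo_lift}) needs rank $p$ there, while every later rank is $\le 2$ because each update is, coordinatewise, an affine or bilinear function of at most two inputs, exactly as in the proofs of \cref{proposition:linear-transformation} and the univariate polynomial encoding.

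I expect the main obstacle to be the fast-exponentiation gadget and its bookkeeping: with only $q$ workspace slots one must carefully schedule which partial powers $x_j^{q^i}$ are formed and consumed so that (a) no extra slots are needed, (b) the slots are genuinely returned to zero before the next monomial, and (c) the layer count comes out to $\lfloor\log_q(\alpha_j+1)\rfloor$ rather than something larger — this is a base-$q$ digit expansion argument for the exponent $\alpha_j$, analogous to square-and-multiply, and getting the off-by-one and the floor exactly right (so it matches the statement) is the fiddly part. A secondary, purely notational obstacle is verifying the rank claim $(p,2,\dots,2)$ uniformly: one must confirm that every layer, including the product layers, has the property that matricizations at cuts $2,\dots,p-1$ have rank $\le 2$. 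This follows because each such layer computes, in each output coordinate, either a passthrough $y_k$, a constant, an affine function $a+by_k$, or a single product $y_a y_b$; writing any of these in the $\{1,\Id\}^{\otimes p}$ basis gives a coefficient tensor that is a sum of at most two elementary tensor products along each of the last $p-1$ factors, hence TT-rank $\le 2$ there, while the first factor (the output index $j\in\{1,\dots,p\}$) contributes the rank-$p$ bound — this is the same mechanism as \cref{lem:vTT_firstcomp} combined with \cref{proposition:linear-transformation}, so no new idea is required, only careful assembly.
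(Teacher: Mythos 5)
Your proposal follows essentially the same route as the paper's proof: the same accumulator/variables/register/workspace decomposition of the state, the same monomial-by-monomial construction with base-$q$ fast exponentiation giving the $\lfloor\log_q(\alpha_j+1)\rfloor$ count, and the same rank argument (each layer replaces a coordinate by a product of distinct coordinates, which is a rank-one elementary tensor in the $\{1,\Id\}^{\otimes p}$ basis, minus the old value, yielding TT-ranks $(p,2,\dots,2)$). One small correction: the product layers in the actual construction multiply up to $q+1$ distinct coordinates at once (e.g.\ $y_{d+2}\prod_i y_{d+2+i}$), not just two as you state, but since any product of distinct coordinates remains a single elementary tensor, the rank-$2$ bound survives by exactly the mechanism you invoke.
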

\begin{proof}
    We decompose the state variable $y \in \R^{p}$, with $p=d+2+q$, as
    \begin{equation*}
        y = (\underbrace{y_1}_{\text{accumulator}}, \underbrace{y_2,\dots, y_{d+1}}_{\text{variables}}, \underbrace{y_{d+2}}_{\text{register}}, \underbrace{y_{d+3}, \dots, y_{d+2+q}}_{\text{workspace}}).
    \end{equation*}
    The idea of the encoding is to iterate over the $\alpha \in \Lambda$ and construct iteratively the polynomial $P$ by adding monomials successively to the accumulator $y_1$. For doing so, we loop over all the variables $x_j$, compute the powers $x_j^{\alpha_j}$ and multiplying to the register $y_{d+2}$ in order to obtain the monomial $c_\alpha \prod_{j=1}^d x_j^{\alpha_j}$. In order to compute these powers, we use the ``workspace slots'' $y_{d+3},\dots,y_{d+2+q}$ and  \emph{fast exponentiation}, which computes $a^n$, for some number $a$ and power $n$, in a recursive manner and using only $\mathcal{O}(\log n)$ operations. Indeed, for $\alpha \in \Lambda$, the base-$q$ representation of $\alpha_j$, for $1 \leq j \leq d$, is given by
    \begin{equation*}
        \alpha_j = \sum_{k=0}^{K_{\alpha,j}-1} \alpha_{j,k} q^k, \quad \alpha_{j,k} \in \{0, \dots, q-1\},
    \end{equation*}
    where $K_{\alpha,j} := \lfloor \log_q(\alpha_j + 1)\rfloor$. Next, when raising $x_j$ to the power $\alpha_j$ we get
    \begin{equation*}
        x_j^{\alpha_j} = x_j^{\sum_{k=0}^{K_{\alpha,j}-1} \alpha_{j,k} q^k} = \prod_{k=0}^{K_{\alpha,j}-1} \left(x_j^{q^k}\right)^{\alpha_j,k}.
    \end{equation*}
    We give below the encoding of $P$ in the form of an algorithm, counting the number of non-zero parameters and TT ranks.
    
    \begin{algorithm}[H]
        Store $y_{d+2} \gets c_\alpha$ \tcc*{1 layer, 2 non-zero parameters, TT ranks $\bm{r}=(p, 2, \dots, 2)$}\;
        \For{$j \gets 1$ \KwTo $d$}{
            Store $y_{d+3},\dots,y_{d+2+q} \gets x_j$ \tcc*{1 layer, 2 non-zero parameters, TT ranks $\bm{r} = (p,2, \dots, 2)$}\;
            \For{$k \gets 0$ \KwTo $K_{\alpha,j}-2$}{
                Store $y_{d+2} \gets y_{d+2}\prod_{i=1}^{\alpha_{j,k}} y_{d+2+i}$ \tcc*{computes $\left(x_j^{q^k}\right)^{\alpha_{j,k}}$, $1+\alpha_{j,k}$ non-zero parameters, TT ranks $\bm{r} = (p, 2, \dots, 2)$}\;
                Store $y_{d+3},\dots,y_{d+2+q} \gets \prod_{i=1}^q y_{d+2+i}$ \tcc*{computes $x_j^{q^{k+1}}$, $1+q$ non-zero parameters, TT ranks $\bm{r}=(p, 2, \dots, 2)$}\;
            }
            Store $y_{d+2} \gets y_{d+2} \prod_{i=1}^{\alpha_{j,k}}y_{d+2+i}$ \tcc*{1 layer, $1+\alpha_{j,k}$ non-zero parameters, TT ranks $\bm{r} = (p, 2, \dots, 2)$}\;
        }
        Add monomial to polynomial $y_1 \gets y_1 + y_{d+2}$ \tcc*{1 layer, 1 non-zero parameter, TT ranks $\bm{r}= (p, 1, \dots, 1)$}\;
        \caption{Computes the $\alpha$-th monomial}
    \end{algorithm}
    Therefore, for constructing the polynomial $P$ we need
    \begin{equation*}
        \sum_{\alpha \in \Lambda} \left(1 + \sum_{j=1}^d (1+K_{\alpha,j}-1+1) + 1\right) \leq |\Lambda|\left(2 + d + d\lfloor \log_q(N_{\max}+1)\rfloor\right)
    \end{equation*}
    layers and the TT ranks are bounded by $\bm{r}=(p,2, \dots, 2)$. The reason for a $2$ to appear in $\bm{r}$ is that in the definition of CTTs, a layer is of the form $\Id + \psi$, so we have to remove the previous value of the state $y$ and replace it with the new one.
\end{proof}
\begin{remark}[Special case $q=1$]
    If we allow $q$ to be equal to $1$, then we can consider the base-$2$ representation of $\alpha_j$ since we already have $x_j$ in the state $y_{1+j}$, and one copy in the workspace $y_{d+3}$. But in order to compute the power $x_j^{\alpha_j}$, we have to evaluate $\alpha_j$ multiplications. Consequently, the number of layers is
    \begin{equation*}
        L = |\Lambda|(2+d) + \sum_{\alpha \in \Lambda}\sum_{j=1}^d \alpha_j =\mathcal{O}\left(d|\Lambda|N_{\max}\right),
    \end{equation*}
    which is linear in $N_{\max}$ instead of $\log_2(N_{\max})$ when using fast exponentiation.
\end{remark}
\begin{remark}[Encoding with a number of layers in $\mathcal{O}(|\Lambda|)$]
    If we allow the width to depend on the degrees $N_j$~\eqref{eq:Nj}, then it is possible to encode $P$ with complexity $\mathcal{O}(|\Lambda|)$. Indeed, by letting $p = 1+\sum_{j=1}^d N_j$, we store $N_j$ copies of $x_j$, so we can directly compute $x_j^{\alpha_j}$ for each $\alpha \in \Lambda$ and add the monomial $c_\alpha \prod_{j=1}^d x_j^{\alpha_j}$ to the accumulator. However, encoding polynomial approximation tools with adaptive polynomial degree would require CTTs with adaptive width. 
\end{remark}

\subsubsection{Neural networks}

By the preceding results, the CTT architecture is able to represent coordinate transformations in early layers, which may highly reduce the required rank of later layers. An immediate consequence of the ability to represent linear transformations, is that the CTT format can also represent certain types of neural networks, provided the basis is chosen correctly.

\begin{proposition}[Concatenation of two CTs]\label{proposition:ct-concatenation}
    Let $f$ and $g$ be two compositional tensors with $L_f$ (resp. $L_g$) layers, width $p_f$ (resp. $p_g$), $m_f$ (resp. $m_g$) non-zero parameters, lift $\mathfrak{L}_f$ (resp. $\mathfrak{L}_g$), retraction $\mathfrak{R}_f$ (resp. $\mathfrak{R}_g$), and common basis $\Phi$ such that $\{1\} \subset \Phi$. Then, there exists a compositional tensor $h$ with $L := \max(L_f, L_g)$ layers, width $p:=p_f+p_g$, $m:=m_f+m_g$ non-zero parameters, lift $\mathfrak{L}(x) := (\mathfrak{L}_f(x), \mathfrak{L}_g(x))$, retraction $\mathfrak{R}(y) := (\mathfrak{R}_f(y^{(f)}), \mathfrak{R}_g(y^{(g)}))$, where $y=(y^{(f)}, y^{(g)})$, and such that $h(x) = (f(x), g(x))$. 
    The TT ranks $\bm{r^\ell}$ of the tensor $\bm{\psi}_\ell$ in layer $\ell$ are given by
    \begin{equation}\label{eq:tt-ranks-concatenation}
        \begin{aligned}
            &r_0^\ell = r_0^{\ell,f} + r_0^{\ell,g},\\
            &r_{k}^{\ell} = r_{k}^{\ell,f} + 1, \quad 1 \leq k \leq p_f-1,\\
            &r_{p_f}^\ell = 1 + r_{1}^{\ell,g},\\
            &r_{p_f+t+1}^{\ell} = 1 + r_{t+1}^{\ell,g}, \quad 1 \leq t \leq p_g-2,\\
            &r_{p_f+p_g}^\ell = 1.
        \end{aligned}
    \end{equation}
\end{proposition}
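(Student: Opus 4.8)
The plan is to run the two compositional tensors in parallel on complementary blocks of an enlarged lifted space, padding the shorter one with trivial layers so that both acquire depth $L=\max(L_f,L_g)$. Concretely, if say $L_f<L_g$, I would insert $L_g-L_f$ layers of the form $\Id+\psi$ with $\psi\equiv 0$ into the representation of $f$; this is legitimate since $\Id+0=\Id$ and the zero functional tensor has a trivial TT representation, and it leaves $f$ unchanged, so afterwards both $f$ and $g$ use exactly $L$ layers. Then put $p:=p_f+p_g$, $\mathfrak{L}(x):=(\mathfrak{L}_f(x),\mathfrak{L}_g(x))$, $\mathfrak{R}(y^{(f)},y^{(g)}):=(\mathfrak{R}_f(y^{(f)}),\mathfrak{R}_g(y^{(g)}))$, and for each layer define $\psi_\ell:\R^p\to\R^p$ by $\psi_\ell(y^{(f)},y^{(g)}):=(\psi_\ell^f(y^{(f)}),\psi_\ell^g(y^{(g)}))$. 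Because $\{1\}\subset\Phi$, this is again a functional tensor over $\Phi$: its coefficient tensor $\bm\psi_\ell\in\R^{p\times n\times\dots\times n}$ is the ``block sum'' that places $\bm\psi_\ell^f$ on the output slots and physical modes $1,\dots,p_f$ with the remaining ($g$-)modes frozen to the index of the constant basis function, and symmetrically places $\bm\psi_\ell^g$ on the slots and modes $p_f+1,\dots,p$. Freezing to the constant index creates no new nonzero entries, so $\bm\psi_\ell$ has exactly as many nonzero parameters as $\bm\psi_\ell^f$ and $\bm\psi_\ell^g$ together; summing over $\ell$ gives $m=m_f+m_g$, while the width is $p=p_f+p_g$ by construction.

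\textbf{Correctness.} The identity $h(x)=(f(x),g(x))$ follows from the block structure: each $\Id+\psi_\ell$ acts as $(a,b)\mapsto(a+\psi_\ell^f(a),\,b+\psi_\ell^g(b))$ because $\psi_\ell^f$ ignores $y^{(g)}$ and $\psi_\ell^g$ ignores $y^{(f)}$. Writing $u_k$ for the $k$-th intermediate state of $h$ and $u_k^f,u_k^g$ for those of $f$ and $g$, an induction on $k$ starting from $u_0=\mathfrak{L}(x)=(\mathfrak{L}_f(x),\mathfrak{L}_g(x))$ gives $u_k=(u_k^f,u_k^g)$, and then $\mathfrak{R}(u_L)=(\mathfrak{R}_f(u_L^f),\mathfrak{R}_g(u_L^g))=(f(x),g(x))$.

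\textbf{The TT-ranks --- the main obstacle.} The substantive step is the rank formula \eqref{eq:tt-ranks-concatenation}, which I would establish by writing the TT cores of $\bm\psi_\ell$ explicitly and reading off the bond sizes. Cores $1,\dots,p_f$ carry $f$'s cores together with a single ``pass-through'' channel that transports the not-yet-evaluated $g$-part of the output across the $f$-block; this is precisely where $1\in\Phi$ is used, since on the $f$-modes the $g$-output rows reduce to the constant $\phi_{j_0}\equiv 1$ and hence cost only one channel, giving $r_k^\ell=r_k^{\ell,f}+1$ for $k<p_f$. Core $p_f+1$ is a ``hand-over'' core in which the $f$-computation has collapsed to its rank-one tail while $g$'s first core is switched on, producing $r_{p_f}^\ell=1+r_1^{\ell,g}$; cores $p_f+1,\dots,p$ then carry $g$'s cores plus a pass-through channel for the already-evaluated $f$-output, yielding the $g$-block entries of \eqref{eq:tt-ranks-concatenation} and $r_p^\ell=1$. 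The delicate part is the bookkeeping: keeping track of which object (the size-$p$ output index, each internal bond, the pass-through channel) sits on each leg of each core, verifying that the extra channel really costs only $+1$ at every bond of either block, and getting the hand-over core right. This is routine but must be carried out with care; everything else above is immediate.
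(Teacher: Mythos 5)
Your proposal is correct and follows essentially the same route as the paper's proof: pad the shorter CT with zero layers, define each $\bm\psi_\ell$ block-wise by freezing the other block's modes to the constant basis index, and obtain the rank formula by viewing $\bm\psi_\ell$ as the sum of an ``$f$-part'' (whose $g$-modes are rank-one constant cores) and a ``$g$-part'' (symmetrically), which is exactly your pass-through-channel bookkeeping. Your explicit induction for $h(x)=(f(x),g(x))$ is slightly more detailed than the paper's one-line justification, but the construction and the rank argument are the same.
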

\begin{proof}
    The encoding of the concatenation $h(x)=(f(x), g(x))$ is done in an explicit block-wise way. The $f$-layers (resp. $g$-layers) act only the first (resp. second) block. Because these blocks are disjoint, a single layer can simultaneously perform one $f$-step and one $g$-step. If one CT has fewer layers, we pad it with identity layers. Thus, we require only $\{1\} \subset \Phi$ and $\max(L_f, L_g)$ layers. Without loss of generality, we assume that $\phi_1(x)=1$. The state of $f$ (resp. $g$) are denoted $y^{(f)}$ (resp. $y^{(g)}$), and we set $y := (y^{(f)}, y^{(g)})$.

    We start by re-indexing the original layer tensors so they are defined for all $\ell=1 \dots L$ by padding with zero-layers. For $1 \leq \ell \leq L_f$ (resp. $1 \leq \ell \leq L_g$) let $\psi_\ell^{(f)}$ (resp. $\psi_\ell^{(g)}$) be the given $f$-layer (resp. $g$-layer), and for $\ell > L_f$ (resp. $\ell > L_g$) define $\psi_\ell^{(f)} = 0$ (resp. $\psi_\ell^{(g)} = 0$).

    Now we give the explicit encoding of the layers $\psi_\ell$. If $1 \leq j \leq p_f$ (an $f$-output coordinate), set
    \begin{equation*}
        \bm{\psi}_\ell(j, i_1, \dots ,i_{p_f}, i_{p_{f+1}}, \dots, i_p) = \begin{cases}
            \bm{\psi}_\ell^{(f)}(j, i_1, \dots, i_{p_f}) &\text{if}~ i_{p_{f+1}}=\dots=i_p=1,\\
            0 & \text{otherwise},
        \end{cases}
    \end{equation*}
    and if $p_f < j \leq p$ (an $g$-output coordinate), set
    \begin{equation*}
        \bm{\psi}_\ell(j, i_1, \dots ,i_{p_f}, i_{p_{f+1}}, \dots, i_p) = \begin{cases}
            \bm{\psi}_\ell^{(g)}(j-p_f, i_{p_f+1}, \dots, i_p) &\text{if}~ i_{1}=\dots=i_{p_f}=1,\\
            0 & \text{otherwise}.
        \end{cases}
    \end{equation*}

    Let $\bm{r^{\ell,f}}$ (resp. $\bm{r^{\ell,g}}$) be the TT ranks of $\bm{\psi}_\ell^{(f)}$ (resp. $\bm{\psi}_\ell^{(g)}$), with $r_0^{\ell,f}=p_f$ (resp. $r_0^{\ell,g}=p_g$) and $r_{p_f}^{\ell,f}=1$ (resp. $r_{p_g}^{\ell,g}=1$). Then the TT ranks $\bm{r^\ell}$ of $\bm{\psi}_\ell$ are given by \eqref{eq:tt-ranks-concatenation}. 
    In fact, $\bm{\psi}_\ell$ can be written as the sum of two tensors $A_\ell$ and $B_\ell$. Let the TT cores of $\bm{\psi}_\ell^{(f)}$ (resp. $\bm{\psi}_\ell^{(g)}$) be $F_\ell^{(k)} \in \R^{r_{k-1}^{\ell,f} \times n_k \times r_{k}^{\ell,f}}$ (resp. $G_\ell^{(k)} \in \R^{r_{k-1}^{\ell,g} \times n_k \times r_{k}^{\ell,g}}$), with $1 \leq k \leq p_f$ (resp. $p_g$). Moreover, let $E_1 \in \R^{1 \times n \times 1}$ be such that $E_1(1,i,1)=\delta_{i,1}$.
    Observe that
    \begin{itemize}
        \item $A_\ell$ is equal to $\bm{\psi}_\ell^{(f)}$ when $i_{p_f+1}=\dots=i_p=1$ and $0$ otherwise, and can be written as
        \begin{equation*}
            \begin{aligned}
            A_\ell(j,i_1,\dots,i_{p_f},i_{p_f+1},\dots,i_p) =& F_\ell^{(1)}(j,i_1,:)F_\ell^{(2)}(:,i_2,:)\dots F_\ell^{(p_f)}(:,i_{p_f},1)\\
            &\cdot \underbrace{E_1(1,i_{p_f+1},1)\dots E_1(1,i_p,1)}_{p_g}.
            \end{aligned}
        \end{equation*}
        \item $B_\ell$ can be written with $p_f$ rank-1 cores for the condition $i_1=\dots=i_{p_f}=1$ and symmetrically
        \begin{equation*}
            \begin{aligned}
                B_\ell(j,i_1,\dots,i_{p_f},i_{p_f+1},\dots,i_p) =& \underbrace{E_1(1,i_{1},1)\dots E_1(1,i_{p_f},1)}_{p_f}G_\ell^{(1)}(j,i_{p_f+1},:)\\
                &\cdot G_\ell^{(2)}(:,i_{p_f+2},:)\dots G_\ell^{(p_g)}(:,i_{p},1).
            \end{aligned}
        \end{equation*}
    \end{itemize}

    Finally, by letting $\mathfrak{L}(x) := (\mathfrak{L}_f(x), \mathfrak{L}_g(x))$ and $\mathfrak{R}(y) := (\mathfrak{R}_f(y^{(f)}), \mathfrak{R}_g(y^{(g)}))$, we have $h(x) := \mathfrak{R} \circ (\Id + \psi_L) \circ \dots \circ (\Id + \psi_1) \circ \mathfrak{L}(x)$ and this compositional tensor gives exactly the concatenation of $f$ and $g$.
\end{proof}

\begin{proposition}[Vectorization of activation function]\label{proposition:vectorization}
    Let $\sigma : \R \to \R$ be an activation function representable by a compositional tensor with $L_\sigma$ layers, width $p_\sigma$, $m_\sigma$ non-zero parameters, lift $\mathfrak{L}_\sigma$, retraction $\mathfrak{R}_\sigma(y)=y_1$, and with basis $\Phi$ such that $\{1\} \subseteq \Phi$. Then the vectorization $\bm{\sigma}(x_1,\dots,x_d)=(\sigma(x_1),\dots,\sigma(x_d))$ can be represented by a compositional tensor with $L_\sigma$ layers, width $p := d \cdot p_\sigma$, $m := d \cdot m_\sigma$ non-zero parameters, lift $\mathfrak{L}(x) := (\mathfrak{L}_\sigma(x_1), \dots, \mathfrak{L}_\sigma(x_d)) \in \R^p$, retraction $\mathfrak{R}(y) := (y_1, y_{1+p_\sigma},\dots,y_{1+(d-1)p_\sigma}) \in \R^d$.
    The TT ranks $\bm{r}^\ell$ of the tensor $\bm{\psi}_\ell$ in layer $\ell$ are given by
    \begin{equation*}
        \begin{aligned}
            r_0^\ell &= \sum_{k=1}^d r_0^{\ell,\sigma} = d \cdot p_\sigma,\\
            r_K^\ell &= r_j^{\ell,\sigma} + (d-1), \quad K=(i-1) + j,\\
            r_{dp_\sigma}^\ell &= 1,
        \end{aligned}
    \end{equation*}
    where $1 \leq i \leq d$ is in the index of the block, and $1 \leq j \leq p_\sigma - 1$ is the position within the block.
\end{proposition}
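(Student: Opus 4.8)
The plan is to realise $\bm{\sigma}$ by running $d$ independent copies of the compositional tensor for $\sigma$ in parallel, one per input coordinate, on $d$ pairwise disjoint blocks of the state. Equivalently this is an iterated application of \cref{proposition:ct-concatenation}: writing $\bm{\sigma}(x_1,\dots,x_d) = \bigl(\bm{\sigma}(x_1,\dots,x_{d-1}),\,\sigma(x_d)\bigr)$ exhibits $\bm\sigma$ as the concatenation of a width-$(d-1)p_\sigma$ CT with the width-$p_\sigma$ CT for $\sigma$, the only (cosmetic) difference with \cref{proposition:ct-concatenation} being that here each copy's lift reads a single coordinate rather than the whole input. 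Accordingly I would decompose the state as $y=(y^{(1)},\dots,y^{(d)})$ with $y^{(i)}\in\R^{p_\sigma}$, take $\mathfrak{L}(x)=(\mathfrak{L}_\sigma(x_1),\dots,\mathfrak{L}_\sigma(x_d))$ and $\mathfrak{R}(y)=(y^{(1)}_1,\dots,y^{(d)}_1)$, and define the $\ell$-th layer $\psi_\ell$ to act on block $i$ exactly as the $\ell$-th layer $\psi^{(\sigma)}_\ell$ of $\sigma$'s CT, i.e.\ $\bm{\psi}_\ell(j,\cdot)=\bm{\psi}^{(\sigma)}_\ell(j-(i-1)p_\sigma,\,\cdot)$ on block-$i$ input coordinates and with the constant feature selected on all other coordinates (available since $\{1\}\subseteq\Phi$; WLOG $\phi_1\equiv 1$).

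Correctness is then immediate: the maps $\Id+\psi_\ell$ act block-diagonally and the input enters only through the lift, so each block $y^{(i)}$ passes through the $L_\sigma$ layers exactly as the corresponding state of $\sigma$'s CT on input $x_i$, and applying $\mathfrak{R}_\sigma(y)=y_1$ coordinatewise yields $(\sigma(x_1),\dots,\sigma(x_d))$. Since all $d$ copies share the same depth $L_\sigma$, no identity padding is needed, so $L=L_\sigma$; the width is $p=dp_\sigma$, and the non-zero parameters are exactly $d$ disjoint copies of the $m_\sigma$ parameters of $\sigma$'s layers (the constant-feature selector cores add none), so $m=dm_\sigma$.

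The technical heart is the TT-rank count for $\bm{\psi}_\ell$. I would obtain it either by induction on $d$ from the concatenation rank formula \eqref{eq:tt-ranks-concatenation}, or, more transparently, by writing $\bm{\psi}_\ell=\sum_{i=1}^d A_\ell^{(i)}$, where $A_\ell^{(i)}$ carries the $p_\sigma$ TT-cores of $\bm{\psi}^{(\sigma)}_\ell$ on the coordinates of block $i$ and a single rank-$1$ core $E_1$ (with $E_1(1,s,1)=\delta_{s,1}$) on every coordinate outside block $i$. Adding these TT decompositions, the contraction index at position $K=(i-1)p_\sigma+j$ with $1\le j\le p_\sigma-1$ (interior to block $i$) carries the $r_j^{\ell,\sigma}$ local channels of block $i$ plus one ``bypass'' channel for each of the remaining $d-1$ blocks, giving $r_K^\ell=r_j^{\ell,\sigma}+(d-1)$; the first index carries one $p_\sigma$-dimensional output slot per block, $r_0^\ell=dp_\sigma$, and the last index is $r_{dp_\sigma}^\ell=1$. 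The main obstacle I expect is pure bookkeeping: keeping the change of index $K\leftrightarrow(i,j)$ consistent and treating the block-boundary positions $K=ip_\sigma$ correctly, where the local TT rank of $\sigma$ collapses to $1$; the explicit $\sum_i A_\ell^{(i)}$ decomposition is what makes this boundary behaviour---and hence the exact match with \eqref{eq:tt-ranks-concatenation}---easy to verify, rather than settling for an over-count from a naive sum-of-ranks bound.
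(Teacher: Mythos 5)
Your proposal is correct and follows essentially the same route as the paper: the paper's proof likewise reduces to \cref{proposition:ct-concatenation} and writes $\bm{\psi}_\ell$ as a sum of $d$ tensors, each carrying the cores of $\bm{\psi}^{(\sigma)}_\ell$ on its own block and padded by rank-$1$ constant cores elsewhere, from which the rank count follows. Your explicit treatment of the block-boundary cuts and of the index change $K\leftrightarrow(i,j)$ is if anything slightly more careful than the paper's brief argument.
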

\begin{proof}
    It is a direct consequence of \cref{proposition:ct-concatenation}. We apply $\sigma$ independently to each coordinate $y_{1 + kp_\sigma}$, $0 \leq k \leq d-1$. Each input coordinate $x_i$ is lifted by $\mathfrak{L}_\sigma$, and then the $L_\sigma$ layers of $\sigma$ are applied to each block corresponding to the lifted input coordinate $x_i$. The retraction $\mathfrak{R}$ gets the output of the composition of tensors of each block.

    Similarly as in \cref{proposition:ct-concatenation}, the tensor $\bm{\psi}_\ell$ in the $\ell$-th layer can be written as a sum of $d$ tensors $U_\ell^{(k)}$, $1 \leq k \leq d$, where $U_\ell^{(k)}$ acts on block $k$ and is padded by rank-$1$ constant cores on all other blocks. This yields the desired result.
\end{proof}

\begin{theorem}[Encoding of a DNN]\label{theorem:encoding-dnn}
    Let $\sigma : \R \to \R$ be an activation function representable by a compositional tensor with $L_\sigma$ layers, width $p_\sigma$, $m_\sigma$ non-zero parameters, lift $\mathfrak{L}_\sigma(x) = (x,0,\dots,0)$, retraction $\mathfrak{R}_\sigma(y)=y_1$, and with basis $\Phi$ such that $\{1, x\} \subseteq \Phi$. Let $f = T_L \circ \sigma \circ T_{L-1} \circ \sigma \circ \dots \circ \sigma \circ T_1$ be a deep neural network of width $p_f \geq d,d_o$ with affine transformations $T_1 : \R^d \to \R^{p_f}$, $T_\ell : \R^{p_f} \to \R^{p_f}$ for $2 \leq \ell \leq L-1$, and $T_L : \R^{p_f} \to \R^{d_o}$. Then, $f$ can be exactly encoded by a compositional tensor with $L + (L-1)L_\sigma$ layers, width $p := p_f \cdot p_\sigma$, lift $\mathfrak{L}$ and retraction $\mathfrak{R}$ given by
    \begin{align*}
        \mathfrak{L}(x)_{i,k} &= \begin{cases}
            x_i & \text{if}~ 1 \leq i \leq d_f~\text{and}~k=1,\\
            0 & \text{otherwise}
        \end{cases} \in \R^{p_f p_\sigma},\\
        \mathfrak{R}(y) &= (y_{1,1}, \dots, y_{d_o,1}) \in \R^{d_o}.
    \end{align*}
    Moreover, the number of non-zero parameters is given by
    \begin{equation*}
        \sum_{\ell=1}^{L-1} (\|A_\ell - I\|_0 + \|b_\ell\|_0) + (\|A_L-I\|_0 + \|b_L\|_0) + (L-1)p_fm_\sigma.
    \end{equation*}
    Suppose that the $L_\sigma$ of $\sigma$ have TT ranks bounded by $\bm{r_\sigma}$. Then the TT ranks of each affine layer are bounded by $(p, p_f, \dots, p_f)$, and those for computing $\sigma$ are bounded by $\bm{r_\sigma}$.

    If $\sigma \in \Span{\Phi}$, then $p_\sigma=1$, $m_\sigma=1$, $L_\sigma=1$ and $\mathfrak{L}_\sigma(x)=x$, which results in an encoding of $f$ by a compositional tensor with $2L-1$ layers, with width $p_f$, and the TT ranks are bounded by $(p, p_f, \dots, p_f)$.
\end{theorem}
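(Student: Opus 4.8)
The plan is to realise $f$ as the composition of $2L-1$ elementary stages — the $L$ affine maps $T_1,\dots,T_L$ interleaved with the $L-1$ componentwise activations $\bm\sigma(z)=(\sigma(z_1),\dots,\sigma(z_{p_f}))$ — and to encode every stage as consecutive layers of one compositional tensor acting on a single fixed lifted space. Concretely, I would work in $\R^p$ with $p=p_f p_\sigma$, viewed as $p_f$ blocks of $p_\sigma$ coordinates written $(i,k)$ with $1\le i\le p_f$, $1\le k\le p_\sigma$: the slot $(i,1)$ carries the current value of channel $i$, and the slots $(i,k)$, $k\ge2$, form the scratch space required by the compositional‑tensor representation of the scalar $\sigma$ inside block $i$. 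The lift $\mathfrak L$ places $x_i$ into $(i,1)$ for $i\le d$ and zero elsewhere, and the retraction $\mathfrak R$ returns $(1,1),\dots,(d_o,1)$, as in the statement. Throughout the construction I would maintain the invariant that, whenever an activation stage is about to start, all scratch slots are zero, so the state at that point coincides with what the lift $\mathfrak L_\sigma$ of the given $\sigma$‑CT would produce inside each block.

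For the affine stages: running $T_\ell$ in place is the affine self‑map of $\R^p$ acting by $z\mapsto A_\ell z+b_\ell$ on the value slots (padded with zeros where $T_\ell$ is not square, which is legitimate since $p_f\ge d,d_o$) and by the identity on all scratch slots; by \cref{cor:linear_trafo_lift} (whose TT realisation uses the features $1,x$, available because $\{1,x\}\subseteq\Phi$) this is a single CTT layer. After subtracting $\Id$, as the layer form $\Id+\psi$ demands, $\psi$ is supported on the $p_f$ value coordinates and affine in the $p_f$ value variables, so the construction behind \cref{proposition:linear-transformation} gives TT ranks bounded by $(p,p_f,\dots,p_f)$ and $\|A_\ell-I\|_0+\|b_\ell\|_0$ nonzero parameters; being the identity off the value slots, such a layer also preserves the zero‑scratch invariant. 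For the activation stages I would apply \cref{proposition:vectorization} with its component count $d$ set to $p_f$: the componentwise $\bm\sigma$ on the $p_f$ value slots is a compositional tensor with $L_\sigma$ layers, width $p_f p_\sigma=p$, $p_f m_\sigma$ nonzero parameters, TT ranks bounded as there, and with lift/retraction matching the block layout. Splicing these $L_\sigma$ layers in directly (they already act on $\R^p$ in the form $\Id+\psi$) and stacking all stages yields $L+(L-1)L_\sigma$ layers, width $p_f p_\sigma$, and, summing the per‑stage parameter counts, the stated total $\sum_{\ell=1}^{L-1}(\|A_\ell-I\|_0+\|b_\ell\|_0)+(\|A_L-I\|_0+\|b_L\|_0)+(L-1)p_f m_\sigma$.

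The one genuine subtlety — and the step I expect to be the main obstacle — is making the zero‑scratch invariant consistent: it holds initially (from $\mathfrak L$) and is preserved by the affine stages, but a generic $\sigma$‑CT need not return its scratch coordinates to zero, so the activation stages as given could corrupt it after the first use. I would remove this by normalising the $\sigma$‑CT so that its last layer also clears the auxiliary coordinates; clearing is linear, hence absorbed into $\psi^\sigma_{L_\sigma}$ without changing $L_\sigma$ (at worst enlarging the rank bound by $p_\sigma$), and then every activation block outputs $(\sigma(\cdot),0,\dots,0)$ in each of its $p_f$ sub‑blocks. With this normalisation a short induction on the stage index — showing that after the $m$‑th affine stage the value slots hold the $m$‑th pre‑activation $h_m$ of the DNN with zero scratch, and after the $m$‑th activation stage they hold $\bm\sigma(h_m)$ — identifies $\mathfrak R\circ(\Id+\psi_{L+(L-1)L_\sigma})\circ\cdots\circ(\Id+\psi_1)\circ\mathfrak L$ with $f$. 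Finally, when $\sigma\in\Span\Phi$ one has $\sigma(x)-x\in\Span\Phi$, so $\sigma$ is already one layer with $p_\sigma=1$, $m_\sigma=1$, $L_\sigma=1$ and no scratch space; the obstacle disappears and the counts collapse to $2L-1$ layers of width $p_f$ with TT ranks bounded by $(p,p_f,\dots,p_f)$.
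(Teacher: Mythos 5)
Your construction is essentially identical to the paper's proof: the same block layout of $p_f$ blocks of $p_\sigma$ coordinates with the value in slot $(i,1)$, affine stages encoded as single layers via the construction of \cref{proposition:linear-transformation} restricted to the value slots, and activation stages obtained by splicing in the $L_\sigma$ layers of the vectorized $\sigma$ as in \cref{proposition:vectorization}, with the same layer, parameter, and rank accounting. Your explicit handling of the zero-scratch invariant (normalising the $\sigma$-CT to clear its workspace) is in fact more careful than the paper's proof, which only remarks that the retraction $\mathfrak{R}_\sigma$ ``can be absorbed by the layers'' without addressing whether the auxiliary coordinates are reset before the next activation stage.
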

\begin{proof}
    We represent each neuron $j$, $1 \leq j \leq p_f$, by a block of $p_\sigma$ coordinates in the CT state. This allows us to apply the CT representation of $\sigma$ to all neurons in parallel. Consequently, the total width required for the encoding of $f$ in a CT is $p := p_\sigma p_f$. Within each block, the first coordinate stores the current scalar value of the neuron, while the remaining coordinates are used for computing $\sigma$ through its CT representation.
    Define the lift $\mathfrak{L} :\R^d \to \R^{p_fp_\sigma} $ by $\mathfrak{L}(x)_{i,j} = x_i$ if $1 \le i \le d_f$ and $j=1$ and $\mathfrak{L}(x)_{i,j} = 0$ otherwise. This choice of lift is motivated by the fact that $\mathfrak{L}_\sigma(x)=(x,0,\dots,0)$ and by \cref{proposition:vectorization}. The state is $y = (y_k)_{k=1}^p$ and we reindex it as $y=(y_{1,1},\dots, y_{1,p_\sigma},y_{2,1}, \dots, y_{p_f,p_\sigma})$ where the first index denotes the block from $1$ to $p_f$ and the second index corresponds to the local coordinate, from $1$ to $p_\sigma$.

    \paragraph{Encoding of the affine layers.}
    The encoding of the affine layers $T_\ell$ can be done similarly as in \cref{proposition:linear-transformation}. If $p_\sigma=1$, we may directly apply \cref{proposition:linear-transformation}. If $p_\sigma > 1$, the affine layer $T_\ell$ must act only on the first coordinate $y_{i,1}$ of each block. The construction is given for $2 \leq \ell \leq L-1$; the cases $\ell=1$ and $\ell=L$ are handled analogously. Since $p_\sigma > 1$, the matrix $A_\ell$ and the vector $b_\ell$ must be modified such that they only act on the variables $y_{i,1}$, $1\le i \le p_f$. Define the matrix $\widetilde{A_\ell} \in \R^{p \times p_f}$ and vector $\widetilde{b_\ell} \in \R^p$ such that 
    \begin{equation*}
        (\widetilde{A_\ell})_{k,j} := [(A_\ell)_{s,j} - \delta_{s,j}] 
        \delta_{k,{(s-1)p_\sigma+1}}, \quad (\widetilde{b_\ell})_{k} := (b_\ell)_s \delta_{k,{(s-1)p_\sigma+1}},
    \end{equation*}
    for $1 \leq s,j \leq p_f$ and $(s-1)p_\sigma +1 \leq k \leq sp_\sigma$.
    Thus, only rows with indices $k$ such that $k \equiv 1 \pmod {p_\sigma} $ are non-zero.
    Following \cref{proposition:linear-transformation} define
    \begin{equation*}
        \begin{aligned}
            U_1^\ell(y_1) &=   \begin{pmatrix}
                y_1 (\widetilde{A_\ell})_{:,1}  + \widetilde{b_\ell} &(\widetilde{A_\ell})_{:,2}   &(\widetilde{A_\ell})_{:,p_f}
            \end{pmatrix} \in \R^{p \times p_f},\\
            U_k^\ell(y_k) &= I_{p_f} + \mathbf{1}_{k \equiv 1 \pmod {p_\sigma}}
            (y_k-1){e_s e_s^\top} \in \R^{p_f \times p_f}, \quad 2 \leq k = (s-1)p_\sigma +1 \leq p-1,\\
            U_p^\ell(y_p) &= (1, \dots, 1)^\top \in \R^{p_f \times 1},
        \end{aligned}
    \end{equation*}
    where $e_s \in \R^{p_f}.$
    Using these cores to define the tensor $\psi_\ell$ yields $\psi_\ell(y)_k =T_\ell(y_{1,1}, \dots, y_{p_f,1})_s - y_{s,1}$ for $k = (s-1)p_\sigma + 1$, $1 \le s \le p_f$, and $\psi_\ell(y)_k= 0 $ otherwise.  Therefore 
    $(\Id+\psi_\ell)(y)_k = T_\ell(y_{1,1}, \dots, y_{p_f,1})_s $ for components  $k = (s-1)p_\sigma + 1$, $1 \le s \le p_f$, and $y_k$ otherwise. 
     Thus, the TT ranks for encoding an affine layer is bounded by $(p,p_f,\dots,p_f)$.
    
    \paragraph{Application of vectorized $\sigma$.}
    We want to apply $\sigma$ independently to each coordinate $y_{i,1}$. As $\mathfrak{L}_\sigma(x)=(x,0,\dots,0)$, we can directly apply the $L_\sigma$ layers $\psi^{(\sigma,1)}, \dots, \psi^{(\sigma, L_\sigma)}$ of $\sigma$, as from the previous paragraph we have the output of the affine layer on the coordinates $(y_{1,1},\dots,y_{p_f,1})$. The retraction $\mathfrak{R}_\sigma$ of $\sigma$ can be absorbed by the layers.
    Hence, the number of non-zero parameters for encoding an affine layer is
    \begin{equation*}
        \|A_\ell - I\|_0 + \|b_\ell\|_0
    \end{equation*}
    and the TT ranks are bounded by those of $\sigma$.
    In total, the construction uses $L+(L-1)L_\sigma$ layers, and the total number of non-zero parameters is
    \begin{equation*}
        \sum_{\ell=1}^{L-1} (\|A_\ell - I\|_0 + \|b_\ell\|_0) + (\|A_L-I\|_0 + \|b_L\|_0) + (L-1)p_fm_\sigma.
    \end{equation*}
\end{proof}

\begin{remark}[CTT can exactly represent a deep ReLU neural network]\label{remark:tt-nn}
    Consider feature functions $\phi_1(x) = 1$, $\phi_2(x)=x$, $\phi_3(x)=|x|$. The ReLU activation function $\sigma(x) = \max(x,0) = \frac{x+|x|}{2} = \frac 1 2 \phi_2(x) + \frac 1 2 \phi_3(x)$. By \cref{cor:linear_trafo_lift}, we can represent the affine transformation $\bm A \cdot x + b$, $\bm A\in\R^{p\times p}$, $b\in\R^p$, by a single CTT layer of rank at most $p$. Therefore, a deep ReLU neural network with $L$ layers and constant width $p$ can be written as a CTT with $2L$ layers.
\end{remark}




\subsection{Universal approximation}

To further motivate the composition of tensors and illustrate their potential, we show that they are universal approximators in $L_\mu^p(\mathcal X; \R)$ and $C(\mathcal X; \R)$ for a large class of bases $\Phi$. The encoding of multivariate polynomials give the universality in these spaces.
A rich body of results on the universality of deep neural networks has been obtained, cf.~\cite{kim2024minimum,Cybenko1989,Lu2017} to name but a few. It has also been shown in \cref{theorem:encoding-dnn} that DNNs can be exactly represented by CTTs, leading to a connection between their approximation classes.

We give here a universality result for the approximation with compositional tensors and CTTs for basis $\Phi$ that can represent the constants and the identity function $\Id$.



\begin{corollary}[Universality in $C(\mathcal{X})$]\label{corollary:universality-c}
    Assume that $1, \Id \in \Span \Phi$. Let $\mathcal{X} \subset \R^d$ be a compact domain, and consider width $p := d+1$. Then the set of compositional tensors $\mathcal{CT}(\Phi;\mathfrak{L},\mathfrak{R})$ and of compositional tensor trains $\mathcal{CTT}_{\bm{r}}(\Phi;\mathfrak{L}, \mathfrak{R})$ with $\mathfrak{L}(x):=(0,x)$, $\mathfrak{R}(y)=y_1$ and $\bm{r} \geq  (d+1,2,\dots,2)$ are dense in $C(\mathcal{X})$ equipped with the sup norm $\|\cdot\|_\infty$. The lower bound on $\bm{r}$ is tight.
\end{corollary}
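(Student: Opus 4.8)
The plan is to reduce the statement to the density of polynomials and then to encode an arbitrary polynomial as a width-$(d+1)$ CTT by a Horner-type scheme. Since $\mathcal{X}$ is compact, the Stone--Weierstrass theorem shows that the algebra $\R[x_1,\dots,x_d]$ is dense in $(C(\mathcal{X}),\|\cdot\|_\infty)$, so it suffices to prove that every polynomial $P$ lies in $\mathcal{CTT}^L_{\bm r}(\Phi;\mathfrak{L},\mathfrak{R})$ for some $L$, with $\mathfrak{L}(x)=(0,x)$, $\mathfrak{R}(y)=y_1$ and $\bm r=(d+1,2,\dots,2)$. Density of $\mathcal{CT}(\Phi;\mathfrak{L},\mathfrak{R})$ and of $\mathcal{CTT}_{\bm r'}(\Phi;\mathfrak{L},\mathfrak{R})$ for every $\bm r'\ge\bm r$ then follows immediately from the inclusions $\mathcal{CTT}_{\bm r}\subseteq\mathcal{CTT}_{\bm r'}\subseteq\mathcal{CT}$.

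For the encoding I would follow the idea of \cref{proposition:encoding-multivariate-polynomial}, but compress the state to exactly $p=d+1$ coordinates: $y_1$ plays the role of an accumulator, and $y_2,\dots,y_{d+1}$ hold the inputs $x_1,\dots,x_d$ (possibly overwriting them temporarily as scratch). The key point is that, because $1,\Id\in\Span\Phi$, a single layer $\Id+\psi_k$ can implement any update in which one coordinate is replaced by a \emph{multi-affine} function of the current state while the others are left fixed; in particular it can perform Horner steps $y_1\mapsto y_j y_1+c$ (multiply the accumulator by a stored variable and add a constant), additions $y_1\mapsto y_1+\sum_j c_j y_{1+j}+c_0$ of an affine form, or copies $y_j\mapsto y_{j'}$. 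Writing $P$ in a suitable multivariate Horner form --- processing the variables successively and reusing the slots of already-consumed inputs to build powers --- exhibits $P$ as a finite composition of such layers, and $\mathfrak{R}(y)=y_1$ reads off the result. Since $1,\Id\in\Span\Phi$, these layers genuinely belong to the admissible family $\mathcal{CT}(\Phi)$.

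It remains to check the ranks and the tightness of $\bm r$. Each layer tensor $\bm\psi_k$ above is zero except in its first output coordinate, so \cref{lem:vTT_firstcomp} gives $r_0=p=d+1$, and the nonzero scalar output $\psi_k(y)_1$ is in every case a sum of at most two rank-one multi-affine functions of $(y_1,\dots,y_p)$ (for instance $y_j y_1-y_1$ is rank one, as is a constant), so the internal TT ranks are at most $2$; hence $P\in\mathcal{CTT}_{(d+1,2,\dots,2)}(\Phi;\mathfrak{L},\mathfrak{R})$. For the tightness I would exhibit explicit extremal targets: any CTT all of whose layers have internal rank $\le1$ is, by \cref{proposition:linear-transformation}, a composition of additive maps and therefore additive, so it cannot even approximate a function with a genuine cross term such as $x\mapsto x_1 x_2$, which forces some internal rank to be $\ge 2$; a similar argument using the fixed lift $\mathfrak{L}(x)=(0,x)$ and retraction $\mathfrak{R}(y)=y_1$ rules out $r_0<d+1$.

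The step I expect to be the main obstacle is the width-$(d+1)$ bookkeeping in the second paragraph: \cref{proposition:encoding-multivariate-polynomial} uses additional register and workspace slots, and squeezing the construction into only the accumulator plus the $d$ input slots requires a careful scheduling of the multi-affine updates --- one must order the Horner operations so that each input $x_j$ is still present (or has been reconstructed in its slot) precisely when needed, without ever calling for an update that is not multi-affine or whose internal rank exceeds $2$. Once this scheduling is pinned down, the remaining verifications are routine.
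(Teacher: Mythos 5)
Your overall skeleton (Stone--Weierstrass to reduce to polynomials, then an explicit layerwise encoding with rank bookkeeping) is the same as the paper's, which simply invokes \cref{proposition:encoding-multivariate-polynomial} together with Stone--Weierstrass. However, the step you yourself flag as ``the main obstacle'' --- producing the encoding at width exactly $p=d+1$ --- is precisely the content of the statement that goes beyond the cited proposition, and you do not supply it. \cref{proposition:encoding-multivariate-polynomial} uses width $p=d+2+q$ with $q\ge 2$ because, beyond the accumulator $y_1$ and the $d$ input slots, it needs a register to build the running product $c_\alpha\prod_j x_j^{\alpha_j}$ and workspace to form powers; at width $d+1$ every slot is already occupied by either the partial sum (which must be preserved across monomials) or an input $x_j$ (which must be preserved, or provably restorable, for later monomials), so there is no free register for the inner Horner recursion. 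The natural multivariate Horner scheme you describe requires one fresh accumulator per nesting level and hence roughly $2d$ live registers, and single-layer updates are only multi-affine in the \emph{state} coordinates (degree at most one in each slot), so one cannot square a slot in place. Whether every polynomial is reachable with one accumulator and $d$ preserved inputs is a genuinely nontrivial question (it is a register-allocation problem in the spirit of Ben-Or--Cleve-type width-bounded straight-line programs), and ``pinning down the scheduling'' is not a routine verification --- it is the theorem. Since the proposal leaves this unproved, the density of the width-$(d+1)$ classes is not established.

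Separately, your tightness argument is incorrect as stated. A layer $\Id+\psi$ whose internal TT ranks are all $1$ is not an additive map: internal rank $1$ means each output $\psi(y)_j$ is a \emph{separable} (rank-one, product-form) function of the state, whereas additive functions such as $\langle a,y\rangle$ have internal ranks equal to $2$ by \cref{proposition:linear-transformation}. So the appeal to that proposition does not apply, and the claim ``rank-$\le 1$ layers compose to additive maps'' is false (for instance, $y\mapsto y_2y_3$ is itself a rank-one tensor in the basis $\{1,\Id\}$). A correct lower-bound argument for the internal ranks would have to account for the $\Id$ part of the layer (e.g.\ $\psi(y)_1=y_2y_3-y_1$ has first unfolding rank $2$), and the paper itself offers no proof of tightness, so this portion would need to be reworked from scratch.
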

\begin{proof}
    \cref{proposition:encoding-multivariate-polynomial} shows that with this choice of basis $\Phi$, width $p=d+1$, lift $\mathfrak{L}$ and $\mathfrak{R}$, we can represent any multivariate polynomial. As a direct consequence of the Stone-Weierstrass theorem, if we let the degree of the polynomial go to infinity (and so the number of layers go to infinity), we have that $\mathcal{CT}(\Phi;\mathfrak{L},\mathfrak{R})$ is dense in $C(\mathcal{X})$ equipped with the sup norm.

    Moreover, \cref{proposition:encoding-multivariate-polynomial} shows that the layers can be exactly represented by TTs with ranks equal to $\bm{r} = (d+1, 2, \dots, 2)$, which gives the density of $\mathcal{CTT}_{\bm{r}}(\Phi;\mathfrak{L}, \mathfrak{R})$ in $C(\mathcal{X})$ for the sup norm.
\end{proof}

\begin{remark}
    A direct consequence of \cref{corollary:universality-c} is that, since $C(\mathcal{X})$ is dense in $L_\mu^q(\mathcal{X})$, with $1 \leq q < \infty$ and $\mu$ a finite measure, we have the density of $\mathcal{CT}(\Phi;\mathfrak{L},\mathfrak{R})$ and $\mathcal{CTT}_{\bm{r}}(\Phi;\mathfrak{L}, \mathfrak{R})$ in $L_\mu^q(\mathcal{X})$.
\end{remark}

In an analogous manner to the approach of \cite{park2021minimum,Leshno1993}, the universality in $L_\mu^q(\R^d)$, $1 \leq q < \infty$, for a finite measure $\mu$ is shown employing the density argument of the space of compactly supported continuous functions $C_c(\R^d)$ in $L_\mu^q(\R^d)$ for the norm $\|\cdot\|_{L_\mu^q}$.

\begin{corollary}[Universality in $L_\mu^q(\R^d)$]\label{corollary:universality-lq}
    Let $\mu$ be a regular Borel finite measure, $1 \leq q < \infty$, and $1, \Id \in \Span{\Phi}$. Let the width be $p := d+1$. Then the set of compositional tensors $\mathcal{CT}(\Phi;\mathfrak{L},\mathfrak{R})$ and of compositional tensor trains $\mathcal{CTT}_{\bm{r}}(\Phi;\mathfrak{L}, \mathfrak{R})$, with $\mathfrak{L}(x):=(0,x)$, $\mathfrak{R}(y)=y_1$ and $\bm{r}=(d+1,2,\dots,2)$, are dense in $L_\mu^q(\R^d)$ for the norm $\|\cdot\|_{L_\mu^q}$.
\end{corollary}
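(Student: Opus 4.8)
The plan is to reduce to the uniform‑approximation statement \cref{corollary:universality-c} through the classical density of $C_c(\R^d)$ in $L^q_\mu(\R^d)$. Since $\mathcal{CTT}_{\bm r}(\Phi;\mathfrak L,\mathfrak R)\subseteq\mathcal{CT}(\Phi;\mathfrak L,\mathfrak R)$, it suffices to prove density of the smaller class, and we keep the data $p=d+1$, $\mathfrak L(x)=(0,x)$, $\mathfrak R(y)=y_1$, $\bm r=(d+1,2,\dots,2)$ fixed throughout so that all approximants stay in the prescribed set.

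First I would use that a finite regular Borel measure makes $C_c(\R^d)$ dense in $L^q_\mu(\R^d)$ for $1\le q<\infty$ (inner regularity / Lusin's theorem together with dominated convergence). So, given $f\in L^q_\mu(\R^d)$ and $\varepsilon>0$, fix $g\in C_c(\R^d)$ with $\|f-g\|_{L^q_\mu}<\varepsilon/2$ and write $\supp g\subseteq\overline{B_r}$. Using outer regularity and $\mu(\R^d)<\infty$, choose $R>r$ with $\mu(\R^d\setminus\overline{B_R})$ as small as desired, put $\mathcal X:=\overline{B_R}$, and apply \cref{corollary:universality-c} to $g|_{\mathcal X}\in C(\mathcal X)$: this yields, through the explicit polynomial encoding of \cref{proposition:encoding-multivariate-polynomial}, some $u\in\mathcal{CTT}_{\bm r}(\Phi;\mathfrak L,\mathfrak R)$ with $\|g-u\|_{C(\mathcal X)}<\delta$ for a $\delta$ still to be chosen.

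It then remains to upgrade this local sup‑estimate to a global $L^q_\mu$‑estimate. Since $g$ vanishes off $\mathcal X$,
\begin{equation*}
    \|g-u\|_{L^q_\mu}^q=\int_{\mathcal X}|g-u|^q\,\dd\mu+\int_{\R^d\setminus\mathcal X}|u|^q\,\dd\mu\le \delta^q\,\mu(\R^d)+\int_{\R^d\setminus\mathcal X}|u|^q\,\dd\mu ,
\end{equation*}
and, combined with the triangle inequality against $g$, the corollary follows once both right‑hand terms are pushed below $(\varepsilon/4)^q$; the first is handled by shrinking $\delta$. The main obstacle is the tail term $\int_{\R^d\setminus\mathcal X}|u|^q\,\dd\mu$: a polynomial that is $\delta$‑close to $g$ on $\overline{B_R}$ need not be small on the unbounded complement, so $R$ (dictated by the decay of $\mu$ at infinity) and $u$ (dictated by Stone--Weierstrass) must be chosen in the right order. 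I would first fix $R$ with $\mu(\R^d\setminus\overline{B_R})$ negligible, then approximate, on the slightly larger ball $\overline{B_{R+1}}$, a continuous extension of $g$ that vanishes outside $\overline{B_R}$, so that the approximant is $O(\delta)$ on the annulus $\overline{B_{R+1}}\setminus\overline{B_R}$; the only remaining contribution then comes from $\R^d\setminus\overline{B_{R+1}}$, a set of arbitrarily small $\mu$‑mass, on which the growth of $u$ is absorbed using finiteness of $\mu$ together with, depending on how rich $\Phi$ is, either a clamping layer (available as soon as $\Phi$ contains, e.g., $|\cdot|$ or $\operatorname{ReLU}$) or a mild decay hypothesis on $\mu$. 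Once this interplay between the decay of $\mu$ and the growth of the encoded polynomial is organized, the remaining estimates are routine bookkeeping inherited from \cref{corollary:universality-c}.
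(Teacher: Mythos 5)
Your overall route is the same as the paper's: truncate $f$ to a ball, use density of $C_c(\R^d)$ in $L_\mu^q(\R^d)$ for a finite regular Borel measure, then invoke \cref{corollary:universality-c} on a compact set and conclude by the triangle inequality. Where you genuinely differ is that you isolate the tail term $\int_{\R^d\setminus\mathcal X}|u|^q\,\dd\mu$ and recognize it as the crux: the approximant $u$ supplied by \cref{corollary:universality-c} is (via \cref{proposition:encoding-multivariate-polynomial}) a polynomial, uniformly close to $g$ only on the compact set $\mathcal X$, and it need not be small, or even $\mu$-integrable, on the unbounded complement. You are right to flag this; the paper's own proof passes over it, since the inequality $\|g-h\|_{L_\mu^q}\leq\mu(K)^{1/q}\|g-h\|_\infty$ used there is only valid when the integral is restricted to $K$, whereas $h$ does not vanish on $\R^d\setminus K$.

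That said, your proposed repair does not close the gap under the stated hypotheses: you control the tail by invoking either a clamping layer (which needs $|\cdot|$ or $\operatorname{ReLU}$ in $\Phi$, not guaranteed by $1,\Id\in\Span\Phi$) or a decay assumption on $\mu$, and neither appears in the corollary. Finiteness of $\mu$ alone cannot absorb the growth of $u$: take $d=1$, $q=1$, $\Phi=\{1,\Id\}$ and $\mu=\sum_{n\ge 1}2^{-n}\delta_{x_n}$ with $x_n=2^{2^n}$; then every element of $\mathcal{CT}(\Phi;\mathfrak{L},\mathfrak{R})$ is a polynomial, no non-constant polynomial lies in $L_\mu^1(\R)$, and only constants remain as admissible approximants, so density fails. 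Thus, as a proof of the corollary exactly as stated, your argument is incomplete — but the incompleteness is inherited from the statement itself, which needs an additional hypothesis (e.g.\ compactly supported $\mu$, finiteness of all polynomial moments of $\mu$, or a bounded cut-off in $\Span\Phi$) before the tail term, and hence the claim, can be controlled.
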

\begin{proof}
    The proof is based on the fact that the space of continuous functions with compact support $C_c(\R^d)$ is dense in $L_\mu^q(\R^d)$, and that by \cref{corollary:universality-c} we can approximate any continuous function on a compact subset.

    Let $f^* \in L_\mu^q(\R^d)$ and $\varepsilon > 0$. Because $\mu$ is a finite measure, we can choose a radius $R > 0$ such that
    \begin{equation*}
        \|f^* - f_R\|_{L_\mu^q} \leq \frac{\varepsilon}{3},
    \end{equation*}
    where $f_R := f^* \cdot \bm{1}_{B(0,R)}$ and $B(0,R)$ is the ball of radius $R$.
    Since $\mu$ is a regular Borel finite measure, there exists a continuous $g \in C_c(\R^d)$ with support in some compact $K \subset B(0,R)$ \cite[Chapter~7]{Folland1999-qq} with
    \begin{equation*}
        \|f_R - g\|_{L_\mu^q} \leq \frac{\varepsilon}{3}.
    \end{equation*}
    The density of $\mathcal{CT}(\Phi;\mathfrak{L},\mathfrak{R})$ (resp. $\mathcal{CTT}_{\bm{r}}(\Phi;\mathfrak{L}, \mathfrak{R})$) in $C(K)$ for $\|\cdot\|_\infty$ given by \cref{corollary:universality-c} gives the existence of a function $h \in \mathcal{CT}(\Phi;\mathfrak{L},\mathfrak{R})$ (resp. $h \in \mathcal{CTT}_{\bm{r}}(\Phi;\mathfrak{L}, \mathfrak{R})$) with
    \begin{equation*}
        \|g-h\|_\infty \leq \frac{\varepsilon}{3\mu(K)^{1/q}}.
    \end{equation*}
    Then $\|g-h\|_{L_\mu^q} \leq \mu(K)^{1/q} \|g-h\|_\infty \leq \frac{\varepsilon}{3}$.

    Combining the three estimates with the triangle inequality gives the final result
    \begin{equation*}
        \|f^*-h\|_{L_\mu^q} \leq \|f^*-f_R\|_{L_\mu^q} + \|f_R-g\|_{L_\mu^q} + \|g-h\|_{L_\mu^q} \leq \varepsilon.
    \end{equation*}
\end{proof}


\begin{remark}[Approximation spaces]\label{remark:approximation-spaces}
    From the previous results, we see a relation between the \emph{approximation spaces} of DNNs and CTTs.
    The complexity of DNNs and CTTs is chosen to be the \emph{number of parameters}. The width of both is \emph{fixed to $p_f \geq d$}, which means only the number of layers is a free parameter. The corresponding approximation tools are denoted $\Sigma^{\text{DNN}} = (\Sigma_n^{\text{DNN}})_{n \in \N}$ and $\Sigma^{\text{CTT}} = (\Sigma_n^{\text{CTT}})_{n \in \N}$. By \cref{theorem:encoding-dnn}, the TT ranks are bounded by $p_f$, which gives $\mathcal{O}(Lp_f^3)$ parameters so there exists $c \geq 1$ such that $\Sigma_n^{\text{DNN}} \subseteq \Sigma_{cn}^{\text{CTT}}$. This inclusion gives $A_q^\alpha(X, \Sigma^{\text{DNN}}) \hookrightarrow A_q^\alpha(X, \Sigma^{\text{CTT}})$\footnote{The \emph{approximation space} associated with the approximation tool $\Sigma$ is defined by $A_q^\alpha(X, \Sigma) := \{f \in X : \|f\|_{A_q^\alpha} < \infty\}$, with $\|f\|_{A_q^\alpha} := \left[\sum_{n=1}^\infty (n^\alpha E(f, \Sigma_{n-1})_X)^q \frac{1}{n} \right]^{1/q}$ for $0 < q < \infty$ and $\|f\|_{A_\infty^\alpha} := \sup_{n \geq 1} n^\alpha E(f, \Sigma_{n-1})_X$, where $E(f, \Sigma_n)_X$ is the best approximation of $f$ by $\Sigma_n$. For more information, see \cite{DeVore1993}.}, i.e., $A_q^\alpha(X, \Sigma^{\text{DNN}})$ is \emph{continuously embedded} in $A_q^\alpha(X, \Sigma^{\text{CTT}})$. 

    The converse remains an open question.
\end{remark}

A further analysis of approximation properties of CTT is let for a future work. 

\subsection{Compression of a CTT}

It is well-known that given a tensor $u \in \mathcal{V}_{\bm{n}}$ we can find a tensor $\tilde{u} \in \mathcal{T}_{\bm{n},\bm{r}}$ in the tensor-train format
such that 
    $\|u-\tilde{u}\| \leq \varepsilon \|u\|,$ 
where $\| \cdot \|$ is any inner product norm induced by inner product norms on univariate function spaces by using the TT-SVD~\cite{Oseledets2011}.

One would extend this result to a compositional tensor-train, that is, we are looking for a method to compress a CTT $g = \mathfrak{R} \circ (\Id + \psi_L) \circ \dots \circ (\Id+\psi_1) \circ \mathfrak{L}$, with $\psi_k \in \mathcal{T}_{\bm{n},\bm{r}}$, by another CTT $\tilde{g} = \mathfrak{R} \circ (\Id + \widetilde{\psi_L}) \circ \dots \circ (\Id + \widetilde{\psi_1}) \circ \mathfrak{L}$, where the $\widetilde{\psi_k}$ are in the tensor-train format with probably smaller ranks.
Formally, given $g \in \mathcal{CTT}_{\leq \bm{r}}$, we are looking for $\tilde{g} \in \mathcal{CTT}_{\leq \widetilde{\bm{r}}}$ such that
\begin{equation}
    \|g-\tilde{g}\|_{L_\mu^2} \leq \varepsilon \|g\|_{L_\mu^2}.
    \label{eq:compressed-tt}
\end{equation}
We consider functions $g$ from $\R^d$ to $\R^{d_o}$ with linear lift and retraction maps of the form  $  \mathfrak{L}(x) = (x,0) \in \R^{p}$ and $  \mathfrak{R}(h) = (h_1,\ldots,h_{d_o}) \in \R^{d_o}$. Both maps $\mathfrak{L}$ and  $\mathfrak{R}$ are $1$-Lipschitz.
We work with locally Lipschitz univariate basis $\Phi$ and a compactly supported measure $\mu$, since in this case the basis $\Phi$ is Lipschitz continuous (with potentially a large Lipschitz constant). First, note that one has
\begin{equation*}
    \|g-\tilde{g}\|_{L_\mu^2} \leq \|\mathfrak{R}\|_{\ell^2 \to \ell^2} \|(\Id+\psi_L)\circ\dots\circ(\Id+\psi_1) - (\Id+\widetilde{\psi_L})\circ\dots\circ(\Id+\widetilde{\psi_1})\|_{L_{\mathfrak{L}_\sharp \mu}^2}.
\end{equation*}
Since $\mathfrak{R}$ is a projector, we have $\|\mathfrak{R}\|_{\ell^2 \to \ell^2}=1$. Therefore, it suffices to find a $\tilde{g}$ such that
\begin{equation*}
    \|(\Id+\psi_L)\circ\dots\circ(\Id+\psi_1) - (\Id+\widetilde{\psi_L})\circ\dots\circ(\Id+\widetilde{\psi_1})\|_{L_{\mathfrak{L}_\sharp \mu}^2} \leq \varepsilon \|(\Id+\psi_L)\circ\dots\circ(\Id+\psi_1)\|_{L_{\mathfrak{L}_\sharp \mu}^2}.
\end{equation*}

First, we show the following useful lemma.
\begin{lemma}\label{lemma:inequality-composition}
    Let $f_k,g_k : \R^p \to \R^p$ be Lipschitz functions, with $1 \leq k \leq L$, and $q \geq 1$. Let $\rho$ be a probability measure on $\R^p$ and assume that $f_k \circ \dots \circ f_1$ and $g_k \circ \dots \circ g_1$ are in $L_\rho^q$ for every $k=1,\dots ,L$. Then we have
    \begin{equation}
        \|f_L\circ\dots\circ f_1 - g_L\circ\dots\circ g_1\|_{L_\rho^q} \leq \varepsilon_L + \sum_{j=1}^{L-1} \varepsilon_j \prod_{k=j+1}^L \Lip(g_k),
        \label{eq:bound-diff-lipscihtz}
    \end{equation}
    where $\varepsilon_j := \|f_j - g_j\|_{L_{\rho_{j-1}}^q}$ and $\rho_j = [f_j \circ\dots\circ f_1]_\sharp \rho$, with $\rho_0=\rho$ by convention.
\end{lemma}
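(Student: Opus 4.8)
The plan is to prove \eqref{eq:bound-diff-lipscihtz} by a telescoping (hybrid) argument in which one factor at a time of the $g$-composition is swapped for the corresponding factor of the $f$-composition, each swap being charged to the Lipschitz constants of the outer factors that remain. For $0 \le j \le L$ I would introduce the hybrid map
\[
  h_j := g_L \circ \dots \circ g_{j+1} \circ f_j \circ \dots \circ f_1,
\]
with the convention that a composition over an empty index range is the identity, so that $h_0 = g_L \circ \dots \circ g_1$ and $h_L = f_L \circ \dots \circ f_1$. Then, pointwise on $\R^p$,
\[
  f_L \circ \dots \circ f_1 - g_L \circ \dots \circ g_1 = h_L - h_0 = \sum_{j=1}^L (h_j - h_{j-1}),
\]
and since the two endpoints lie in $L^q_\rho$ by hypothesis, the triangle inequality in $L^q_\rho$ reduces the claim to bounding $\|h_j - h_{j-1}\|_{L^q_\rho}$ for each $j$.

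For fixed $j$, the maps $h_j$ and $h_{j-1}$ share the inner block $F_{j-1} := f_{j-1} \circ \dots \circ f_1$ and the outer block $G := g_L \circ \dots \circ g_{j+1}$, differing only in applying $f_j$ versus $g_j$ in between, so that $h_j - h_{j-1} = G \circ f_j \circ F_{j-1} - G \circ g_j \circ F_{j-1}$. Submultiplicativity of Lipschitz constants under composition gives $\Lip(G) \le \prod_{k=j+1}^L \Lip(g_k)$, an empty product (hence $1$) when $j = L$, which yields the pointwise estimate
\[
  |h_j(x) - h_{j-1}(x)| \le \Big(\prod_{k=j+1}^L \Lip(g_k)\Big)\, \big| f_j(F_{j-1}(x)) - g_j(F_{j-1}(x)) \big|.
\]
Raising to the power $q$, integrating against $\rho$, and using the pushforward change-of-variables identity $\int_{\R^p} (\phi \circ F_{j-1})\, d\rho = \int_{\R^p} \phi\, d\rho_{j-1}$ with $\phi = |f_j - g_j|^q$ and $\rho_{j-1} = (F_{j-1})_\sharp \rho$, I obtain
\[
  \|h_j - h_{j-1}\|_{L^q_\rho} \le \Big(\prod_{k=j+1}^L \Lip(g_k)\Big)\, \|f_j - g_j\|_{L^q_{\rho_{j-1}}} = \varepsilon_j \prod_{k=j+1}^L \Lip(g_k).
\]

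Summing these estimates over $j = 1, \dots, L$ and peeling off the $j = L$ term (whose outer product is empty) gives precisely the right-hand side of \eqref{eq:bound-diff-lipscihtz}. Along the way I would record the routine integrability bookkeeping: $f_j \circ \dots \circ f_1 \in L^q_\rho$ forces $f_j \in L^q_{\rho_{j-1}}$, while $f_{j-1} \circ \dots \circ f_1 \in L^q_\rho$ (for $j \ge 2$; the hypothesis $g_1 \in L^q_\rho$ handles $j = 1$) makes $\rho_{j-1}$ have finite $q$-th moment, so the Lipschitz map $g_j$ likewise lies in $L^q_{\rho_{j-1}}$, constants being $\rho_{j-1}$-integrable since $\rho_{j-1}$ is a probability measure; hence each $\varepsilon_j$ is finite and each $h_j - h_{j-1} \in L^q_\rho$, so the telescoping and triangle steps are legitimate. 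I do not anticipate a substantive obstacle --- the argument is essentially bookkeeping --- the only points needing care being the empty-composition / empty-product conventions at the two ends ($j = 1$ and $j = L$) and the correct tracking of which pushforward measure each intermediate norm is taken against.
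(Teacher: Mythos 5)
Your proof is correct and is essentially the same argument as the paper's: the paper proceeds by induction on $L$, splitting off the outermost factor via the triangle inequality and the Lipschitz bound, which when unrolled produces exactly your telescoping sum over the hybrid maps $g_L\circ\dots\circ g_{j+1}\circ f_j\circ\dots\circ f_1$ with the same pushforward change of variables. The only difference is presentational (explicit telescoping versus recursion), plus your added integrability bookkeeping, which the paper leaves implicit.
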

\begin{proof}
   We show the result by induction on $L$. Assume that the result holds for some $k$. Then,
    \begin{align*}
        \|f_{k+1}\circ\dots\circ f_1 - g_{k+1}\circ\dots\circ g_1\|_{L_\rho^q} &\leq \|f_{k+1}-g_{k+1}\|_{L_{\rho_k}^q} + \|g_{k+1}\circ f_k\circ\dots\circ f_1 - g_{k+1}\circ\dots\circ g_1\|_{L_\rho^q}\\
        &\leq \|f_{k+1}-g_{k+1}\|_{L_{\rho_k}^q} + \Lip(g_{k+1}) \|f_k\circ\dots\circ f_1 - g_k\circ\dots\circ g_1\|_{L_\rho^q}\\
        &\leq \varepsilon_{k+1} + \Lip(g_{k+1}) \left(\varepsilon_k + \sum_{j=1}^{k-1} \varepsilon_j \prod_{m=j+1}^k \Lip(g_m)\right)\\
        &= \varepsilon_{k+1} + \sum_{j=1}^k \varepsilon_j \prod_{m=j+1}^{k+1} \Lip(g_m),
    \end{align*}
    which concludes the proof.
\end{proof}

In the case of compositional tensors, the functions $f_k$ and $g_k$ are not globally Lipschitz, but only locally Lipschitz. We then have the corollary in the case where the input space $\Omega \subset \R^p$ is compact.
\begin{corollary}\label{corollary:inequality-composition-compact}
    Let $f_k,g_k : \Omega_k \to \Omega_{k+1}$ be locally Lipschitz function, with $1 \leq k \leq L$, $q \geq 1$ and $\Omega_{k+1} := f_k(\Omega_k) \cup g_k(\Omega_k)$ where $\Omega_0 := \Omega$. Let $\rho$ be a probability measure on $\Omega$ and assume that $f_k \circ \dots \circ f_1$ and $g_k \circ \dots \circ g_1$ are in $L_\rho^q$ for every $k=1,\dots,L$. Then the functions $f_k$ and $g_k$ are Lipschitz on $\Omega_k$ and the inequality \eqref{eq:bound-diff-lipscihtz} holds.
\end{corollary}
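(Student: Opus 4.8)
The plan is to deduce the corollary from \cref{lemma:inequality-composition}: the only thing missing there is that $f_k,g_k$, which are assumed merely locally Lipschitz, are in fact (globally) Lipschitz on their effective domain $\Omega_k$, and that this domain is compact, so that the constants $\Lip(f_k),\Lip(g_k)$ appearing in \eqref{eq:bound-diff-lipscihtz} are finite.

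First I would show by induction that each $\Omega_k$ is compact: $\Omega_0=\Omega$ is compact by hypothesis, and if $\Omega_k$ is compact then, since a locally Lipschitz map is in particular continuous, $f_k(\Omega_k)$ and $g_k(\Omega_k)$ are compact, hence so is their union $\Omega_{k+1}=f_k(\Omega_k)\cup g_k(\Omega_k)$. As a byproduct, each composition $f_k\circ\dots\circ f_1$ (resp. $g_k\circ\dots\circ g_1$) maps $\Omega$ into a compact set, hence is bounded, so the hypothesis that these compositions lie in $L_\rho^q$ is automatically satisfied.

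Second, I would establish the elementary fact that a locally Lipschitz function on a compact set $K$ is Lipschitz on $K$. If it were not, there would exist $x_n\neq y_n$ in $K$ with $|f(x_n)-f(y_n)|>n\,|x_n-y_n|$; continuity and compactness make $f$ bounded, forcing $|x_n-y_n|\to 0$, and after passing to a subsequence $x_n\to x_*$ and $y_n\to x_*$ for some $x_*\in K$. By local Lipschitzness there is a neighbourhood of $x_*$ on which $f$ has a finite Lipschitz constant $C$; for $n$ large both $x_n,y_n$ lie in it, so $|f(x_n)-f(y_n)|\le C\,|x_n-y_n|$, contradicting the assumption once $n>C$. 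Applying this to $f_k$ and $g_k$ on the compact set $\Omega_k$ yields finite constants $\Lip(f_k),\Lip(g_k)$ and proves the first assertion of the corollary.

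Finally, I would re-run the induction in the proof of \cref{lemma:inequality-composition} unchanged. The only place global Lipschitzness is used there is the pointwise estimate
\[
    |g_{k+1}(f_k\circ\dots\circ f_1(x))-g_{k+1}(g_k\circ\dots\circ g_1(x))|\le \Lip(g_{k+1})\,|f_k\circ\dots\circ f_1(x)-g_k\circ\dots\circ g_1(x)|,
\]
and this remains valid because, by the very construction of the $\Omega_j$, both arguments of $g_{k+1}$ lie in $\Omega_{k+1}$, the set on which $g_{k+1}$ has just been shown to be $\Lip(g_{k+1})$-Lipschitz; integrating over $\rho$ and iterating as in \cref{lemma:inequality-composition} gives \eqref{eq:bound-diff-lipscihtz} verbatim. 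The only genuinely non-formal point is the second step: the domains $\Omega_k$ occurring in the intended CTT application (successive pushforwards of $\mathfrak{L}_\sharp\mu$ through the layers) are typically neither convex nor connected, so one cannot turn a local Lipschitz bound into a global one by integrating along a segment; the sequential-compactness argument above is precisely what replaces that. Everything else is bookkeeping of the nested domains, ensuring each composition stays inside the set on which the next Lipschitz constant is defined.
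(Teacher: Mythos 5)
Your proof is correct and supplies exactly the argument the paper leaves implicit (the corollary is stated there without proof): compactness of $\Omega$ propagates through the continuous images $\Omega_{k+1}=f_k(\Omega_k)\cup g_k(\Omega_k)$, a locally Lipschitz function on a compact set is globally Lipschitz on it, and \cref{lemma:inequality-composition} then applies verbatim since both arguments of $g_{k+1}$ lie in its domain. Your sequential-compactness argument for the second step is sound and, as you note, correctly avoids any convexity or connectedness assumption on the sets $\Omega_k$; the observation that the $L_\rho^q$ hypothesis becomes automatic is a valid bonus.
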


Based on \cref{corollary:inequality-composition-compact}, we design an algorithm that constructs a \emph{compressed TT} $\tilde{g}$ such that \eqref{eq:compressed-tt} is satisfied. 
\begin{proposition}
    Let $g = \mathfrak{R} \circ (\Id + \psi_L) \circ \dots \circ (\Id+\psi_1) \circ \mathfrak{L}$ be a CTT with $\psi_k \in \mathcal{T}_{\bm{n},\bm{r}}$ and $M := \|(\Id+\psi_L)\circ\dots\circ(\Id+\psi_1)\|_{L_{\mathfrak{L}_\sharp \mu}^2}$. By choosing $\widetilde{\psi_k} \in \mathcal{T}_{\bm{n},\bm{\widetilde{r}}}$ such that
    \begin{equation*}
        \delta_j := \|\bm{\psi_j} - \widetilde{\bm{\psi_j}}\|_F \leq \varepsilon \frac{M}{L \|\Phi\|_{L_{\rho_{j-1}}^2}} \prod_{k=j+1}^L \Lip(\widetilde{\psi_k})^{-1},
    \end{equation*}
    we get
    \begin{equation*}
        \|g-\widetilde{g}\|_{L_\mu^2} \leq \varepsilon \|g\|_{L_\mu^2},
    \end{equation*}
    where $\widetilde{g} := \mathfrak{R} \circ (\Id + \widetilde{\psi_L}) \circ \dots \circ (\Id+\widetilde{\psi_1}) \circ \mathfrak{L}$.
\end{proposition}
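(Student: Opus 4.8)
The plan is to reduce, via the inequality established just before the statement (which uses that $\mathfrak{R}$ is a $1$-Lipschitz projector), the estimate to bounding the discrepancy of the compositions of layers: with $F:=(\Id+\psi_L)\circ\dots\circ(\Id+\psi_1)$ and $\widetilde F:=(\Id+\widetilde\psi_L)\circ\dots\circ(\Id+\widetilde\psi_1)$ one has $\|g-\widetilde g\|_{L^2_\mu}\le\|F-\widetilde F\|_{L^2_{\mathfrak{L}_\sharp\mu}}$, so it suffices to prove $\|F-\widetilde F\|_{L^2_{\mathfrak{L}_\sharp\mu}}\le\varepsilon M$. I would first record that $\Omega:=\supp(\mathfrak{L}_\sharp\mu)$ is compact (since $\mu$ is compactly supported and $\mathfrak{L}$ is affine), that each layer $\Id+\psi_k$ and $\Id+\widetilde\psi_k$ is continuous and locally Lipschitz (the basis $\Phi$ is locally Lipschitz, hence so are the functional tensors built on it), and that all partial compositions are continuous, hence bounded and square-integrable on the relevant compacts. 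These are exactly the hypotheses of \cref{corollary:inequality-composition-compact}, applied with $f_k=\Id+\psi_k$, $g_k=\Id+\widetilde\psi_k$, $\rho=\mathfrak{L}_\sharp\mu$ (a probability measure after the harmless normalisation of $\mu$) and $q=2$; it gives
\begin{equation*}
    \|F-\widetilde F\|_{L^2_{\mathfrak{L}_\sharp\mu}}\ \le\ \sum_{j=1}^{L}\varepsilon_j\prod_{k=j+1}^{L}\Lip(\Id+\widetilde\psi_k),\qquad \varepsilon_j:=\|\psi_j-\widetilde\psi_j\|_{L^2_{\rho_{j-1}}},
\end{equation*}
with $\rho_0=\mathfrak{L}_\sharp\mu$ and $\rho_j=[(\Id+\psi_j)\circ\dots\circ(\Id+\psi_1)]_\sharp(\mathfrak{L}_\sharp\mu)$, the $j=L$ term carrying an empty (unit) product; here $\Lip(\widetilde\psi_k)$ in the statement is read as the Lipschitz constant of the layer map $\Id+\widetilde\psi_k$ on the relevant compact.

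The heart of the argument is then to trade the \emph{functional} layer error $\varepsilon_j$ for the \emph{algebraic} coefficient-tensor error $\delta_j=\|\bm\psi_j-\widetilde{\bm\psi_j}\|_F$. By the representation \eqref{eq:coefficient_tensor}, the $m$-th output component satisfies $\psi_j(y)_m-\widetilde\psi_j(y)_m=\langle(\bm\psi_j-\widetilde{\bm\psi_j})(m,\cdot),\bm\Phi(y)\rangle_F$, so Cauchy--Schwarz for the Frobenius inner product followed by summation over $m$ gives $\|\psi_j(y)-\widetilde\psi_j(y)\|_2\le\|\bm\psi_j-\widetilde{\bm\psi_j}\|_F\,\|\bm\Phi(y)\|_F$; integrating against $\rho_{j-1}$ yields $\varepsilon_j\le\delta_j\,\|\Phi\|_{L^2_{\rho_{j-1}}}$, where $\|\Phi\|_{L^2_{\rho_{j-1}}}^2:=\int\|\bm\Phi(y)\|_F^2\,\dd\rho_{j-1}(y)$ is finite since $\rho_{j-1}$ is compactly supported and $\bm\Phi$ continuous. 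Substituting this and the hypothesis on $\delta_j$ into the previous bound, for every $j$,
\begin{equation*}
    \varepsilon_j\prod_{k=j+1}^{L}\Lip(\Id+\widetilde\psi_k)\ \le\ \delta_j\,\|\Phi\|_{L^2_{\rho_{j-1}}}\prod_{k=j+1}^{L}\Lip(\Id+\widetilde\psi_k)\ \le\ \frac{\varepsilon M}{L},
\end{equation*}
and summing over $j=1,\dots,L$ gives $\|F-\widetilde F\|_{L^2_{\mathfrak{L}_\sharp\mu}}\le\varepsilon M$, hence, by the reduction preceding the statement, $\|g-\widetilde g\|_{L^2_\mu}\le\varepsilon\|g\|_{L^2_\mu}$. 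I would also remark that such $\widetilde\psi_k$ can be obtained concretely by TT rounding ($\mathtt{round}_{\widetilde{\bm r}}$) whenever the singular values of the relevant component unfoldings decay fast enough, processing $k=L,L-1,\dots,1$ so that the Lipschitz constants entering the tolerance for $\widetilde\psi_j$ are already fixed, which removes any apparent circularity.

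The step I expect to be the main obstacle is the conversion $\varepsilon_j\mapsto\delta_j$ together with the finiteness bookkeeping behind it: one has to guarantee that every $\|\Phi\|_{L^2_{\rho_{j-1}}}$ and every $\Lip(\Id+\widetilde\psi_k)$ is finite, and this is exactly where the standing assumptions (compactly supported $\mu$, locally Lipschitz univariate basis $\Phi$) are essential — they propagate compact support through every pushforward $\rho_{j-1}$, making $\bm\Phi$ bounded and the layer maps Lipschitz on the relevant compacts. A secondary, more clerical point is to align the indexing and empty-product conventions of \cref{corollary:inequality-composition-compact} with the product $\prod_{k=j+1}^L$ of the statement, so that the $j=L$ contribution is exactly $\delta_L\,\|\Phi\|_{L^2_{\rho_{L-1}}}$ with no Lipschitz factor.
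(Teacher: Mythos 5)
Your proposal is correct and follows essentially the same route as the paper's proof: reduce to the layer-composition difference via the $1$-Lipschitz retraction, invoke \cref{corollary:inequality-composition-compact}, convert each functional layer error $\varepsilon_j$ to the Frobenius coefficient error $\delta_j$ by Cauchy--Schwarz, and sum. You supply more of the supporting bookkeeping (compactness propagation, finiteness of $\|\Phi\|_{L^2_{\rho_{j-1}}}$, and the reading of $\Lip(\widetilde\psi_k)$ as the Lipschitz constant of the layer map $\Id+\widetilde\psi_k$) than the paper does, but the argument is the same.
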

\begin{proof}
    Let $\delta_j := \|\bm{\psi_j} - \widetilde{\bm{\psi_j}}\|_F$. By \cref{corollary:inequality-composition-compact}, we have,
    \begin{equation*}
        \|(\Id+\psi_L)\circ\dots\circ(\Id+\psi_1) - (\Id+\widetilde{\psi_L})\circ\dots\circ(\Id+\widetilde{\psi_1})\|_{L_\rho^2} \leq \sum_{j=1}^L \varepsilon_j \prod_{k=j+1}^L \Lip(\widetilde{\psi_k})
    \end{equation*}
    with $\prod_{k=L+1}^L \Lip(\widetilde{\psi_k})^{-1}=1$ by convention.
    Using the Cauchy-Schwarz inequality, we immediately get
    \begin{equation*}
        \varepsilon_j \leq \delta_j \|\Phi\|_{L_{\rho_{j-1}}^2}.
    \end{equation*}
    By taking
    \begin{equation*}
        \delta_j := \|\bm{\psi_j} - \widetilde{\bm{\psi_j}}\|_F \leq \varepsilon \frac{M}{L \|\Phi\|_{L_{\rho_{j-1}}^2}} \prod_{k=j+1}^L \Lip(\widetilde{\psi_k})^{-1},
    \end{equation*}
    we get the result.
\end{proof}

Note that $\|\Phi\|_{L_{\rho_{j-1}}^2}^{-1}$ and $\|(\Id+\psi_L)\circ\dots\circ(\Id+\psi_1)\|_{L_{\mathfrak{L}_\sharp \mu}^2}$ can be estimated by Monte-Carlo sampling and that $\Lip(\widetilde{\psi_k})$ can be estimated using the gradient of $\widetilde{\psi_k}$, which we have access to. Starting from $j=L$, one can iteratively construct $\widetilde{\psi_j}$ using TT-rounding \cite{Oseledets2011} for instance.
\section{Optimization}
\label{sec:optimization}

In this section we aim to design a learning algorithm for the compositional tensor format. We propose to study two differents approaches: a first algorithm based on optimal control called the \emph{method of successive approximation (MSA)}, and the second is (a modification of) \emph{natural gradient descent}, which has recently gained popularity in training of DNNs and earlier in quantum physics.

\subsection{An optimal control perspective}
We focus on the regression problem 
\begin{equation}
    \min_{\Psi \in \mathcal{CT}^L} \|\eta - \Psi\|_{L_\mu^2(\mathcal X)}^2,
    \label{eq:ctt-regression}
\end{equation}
where $\mu$ is a finite measure, $\mathcal X := \mathcal X_1 \times\dots\times \mathcal X_d$ is the Cartesian product of closed intervals $\mathcal X_\nu$, and $\eta \in L_\mu^2(\mathcal X)$ is a target function. We assume in the following that the CT $\Psi$ is defined by functional tensors $\psi_1,\ldots,\psi_L$ and associated coefficient tensors  $\bm \psi_1,\ldots,\bm \psi_L$ together with an appropriate lift $\mathfrak{L}$ and retraction $\mathfrak{R}$ in the sense of \cref{def:flow_CTT}. Letting $\hat{X}_0\sim \mu$, the above is equivalent to
\begin{equation}
    \min_{\Psi \in \mathcal{CT}^L} \E_{\hat{X}_0\sim \mu} \|\eta(\hat{X}_0) - \Psi(\hat{X}_0)\|_2^2.
    \label{eq:ctt-regression_exp}
\end{equation}
Since the expectation is in general not computable, it is in practice replaced by an unbiased empirical estimation via i.i.d. realizations of the random variable $\hat{X}_0$. Still, since the model class is highly nonlinear as a composition of nonlinear functions, it makes sense to introduce regularization. In the following we describe several related regularization approaches, their connection, and interpretations, as well as strengths and weaknesses.

\paragraph{Stochastic optimal control} A first important observation is that a straightforward approach for regularization leads to a discrete-time stochastic optimal control problem. Consider for $X_0 = \mathfrak{L}(\hat{X}_0)$ the problem
\begin{equation}
    \left\{
    \begin{aligned}
    &\min_{\psi_1,\ldots,\psi_L \in \mathcal{M}_{\bm r}}  J(\psi_1,\dots,\psi_L) := \E_{\hat{X}_0\sim \mu} \left[ \sum_{k=1}^L \mathcal{L}_k(X_{k-1},\psi_k) + \|\eta(\hat{X}_0) - \mathfrak{R}(X_L)\|_2^2\right] \\
    &X_{k+1} = X_k + \psi_{k+1}(X_k), \quad k=0,\ldots,L-1,
\end{aligned}\label{eq:stoch_oc_prob}
 \right.
\end{equation}
for positive functions $\mathcal{L}_k\colon \R^p\times \mathcal{T}_{\bm r}\to [0,\infty)$. Since $\mathfrak{R}(X_L) = \Psi(X_0)$ by construction, the only difference between \eqref{eq:ctt-regression_exp} and \eqref{eq:stoch_oc_prob} lies in the introduction of the sum in the objective functional.  From the point of view of the regression problem, this can naturally be understood as a  regularization, in which each layer adds a penalty defined by $\mathcal{L}_k$ to the regression loss. \eqref{eq:stoch_oc_prob} has the form of a stochastic optimal control (SOC) problem with deterministic dynamics but random initial condition $X_0$. In this setting, the regularization function $\mathcal{L}_k$ is often called the \emph{running costs}, whereas the actual regression loss $\|\eta(\hat{X}_0) - \mathfrak{R}(X_L)\|_2^2$ is called the \emph{terminal costs} of the system. From the point of view of SOC, the TTs $\psi_1,\ldots,\psi_L$ are collectively called the control of the system. Hence, when we talk about an optimal control in the following, we mean a collection $\psi_1,\ldots,\psi_L$ solving \eqref{eq:stoch_oc_prob}.
First-order conditions for such a control in the general setting are given by the Pontryagin maximum principle.

\begin{lemma}[Pontryagin maximum principle (PMP)]\label{lemma:pmp-discrete_stoch}
    Consider for some $p\in\N$ the optimal control problem
    \begin{equation}
        \left\{
        \begin{aligned}
            \min_{u_1,\ldots,u_{L}} & \sum_{k=1}^{L} \mathcal{L}_k( X_{k-1},u_k) +  g( X_L)\\
            X_{k}&=  f(X_{k-1},u_k)
        \end{aligned}
        \right.,
        \label{eq:oc}
    \end{equation}
    where $ X_0 \sim \rho$, the control values $u_k$ are in a linear space $\mathcal{U}$,  $ f\colon \mathbb{R}^p\times \mathcal{U}\to \mathbb{R}^p$ denotes the dynamics, $\mathcal{L}_k\colon \mathbb{R}^p\times \mathcal{U} \to [0,\infty) $ is the running costs and $ g\colon \mathbb{R}^p\to [0,\infty)$ is the terminal costs.
    For any state-control pair $(( X_k)_{k=0}^L,(u_k)_{k=1}^{L})$ define the costates $( \lambda_k)_{k=0}^{L}$ via
    \begin{align}
         \lambda_k &= \partial_{ X_k} \mathcal{L}_k( X_k,u_{k+1}) +  \partial_{ X_k} f( X_k,u_{k+1})\cdot \lambda_{k+1}, \label{eq:ode-costate}\\
         \lambda_L &= \nabla  g( X_L).\label{eq:terminal-costate}
    \end{align}
    Then, an optimal control $u_k^*$ and the corresponding state and costate values $ X^*_k$, $ \lambda_k^*$ satisfy for each $k\in \{1,\ldots,L\}$ the following first order necessary condition 
    \begin{align}
        \mathbb{E}[H_k( X_{k-1}^*, u_{k}^*, \lambda^*_{k})] &\leq \E [H_k( X_{k-1}, u_{k},  \lambda_{k})]\label{eq:min-hamiltonian}
    \end{align}
     for all state-control costate tuples $(( X_k)_{k=0}^L,(u_k)_{k=1}^{L},( \lambda_k)_{k=0}^L)$, 
    where
    \begin{equation}
        H_k( X_{k-1}, u_{k},  \lambda_{k}) := \mathcal{L}_k( X_{k-1}, u_{k}) +  \langle  \lambda_{k},  f( X_{k-1}, u_{k})\rangle
        \label{eq:hamiltonian}
    \end{equation}
    is the Hamiltonian.
\end{lemma}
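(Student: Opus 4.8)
The statement is the discrete-time Pontryagin maximum principle for a stochastic control problem with deterministic dynamics and random initial condition. I would prove it by a standard variational / needle-perturbation argument, adapted to the fact that here the ``randomness'' only enters through $X_0 \sim \rho$, so expectations can be carried along linearly. The key structural observation is that the costate recursion \eqref{eq:ode-costate}--\eqref{eq:terminal-costate} is precisely the adjoint (backward) equation associated with the forward dynamics $X_k = f(X_{k-1},u_k)$, so that the total cost, differentiated with respect to a perturbation of the control at a single stage $k$, collapses to the derivative of the Hamiltonian $H_k$ at that stage.

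First I would fix a candidate optimal tuple $((X_k^*),(u_k^*),(\lambda_k^*))$ and, for a fixed index $k$, consider a one-stage perturbation $u_k = u_k^* + \theta\, w$ for $w \in \mathcal{U}$ and $\theta \in [0,1]$, leaving $u_j = u_j^*$ for $j \neq k$. Propagating this through the forward dynamics produces perturbed states $X_j^\theta$ that agree with $X_j^*$ for $j < k$ and satisfy, for $j \geq k$, a linearized recursion whose sensitivity $\xi_j := \partial_\theta X_j^\theta|_{\theta=0}$ obeys $\xi_k = \partial_{u_k} f(X_{k-1}^*,u_k^*)\cdot w$ and $\xi_{j} = \partial_{X_{j-1}} f(X_{j-1}^*,u_j^*)\cdot \xi_{j-1}$ for $j > k$. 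Then I would differentiate the objective $J = \E[\sum_{m=1}^L \mathcal{L}_m(X_{m-1}^\theta,u_m^\theta) + g(X_L^\theta)]$ at $\theta = 0$: the running-cost terms for $m \le k$ contribute only the direct term $\E[\langle \partial_{u_k}\mathcal{L}_k(X_{k-1}^*,u_k^*), w\rangle]$ (the state $X_{k-1}^*$ being unperturbed), the terms for $m > k$ and the terminal term $g$ contribute $\E[\langle \nabla_{X} (\cdots), \xi_{m-1}\rangle]$. Using the costate recursion as a summation-by-parts device (an Abel-summation telescoping along $j = k,\dots,L$) one checks that $\sum_{m>k}\langle \partial_{X_{m-1}}\mathcal{L}_m, \xi_{m-1}\rangle + \langle \nabla g(X_L^*),\xi_L\rangle = \langle \lambda_k^*, \xi_k\rangle$, so that
\begin{equation*}
    \left.\frac{d}{d\theta}\right|_{\theta=0} J = \E\Big[\big\langle \partial_{u_k}\mathcal{L}_k(X_{k-1}^*,u_k^*) + \partial_{u_k} f(X_{k-1}^*,u_k^*)^\top \lambda_k^*,\; w\big\rangle\Big] = \E\big[\partial_{u_k} H_k(X_{k-1}^*,u_k^*,\lambda_k^*)\cdot w\big].
\end{equation*}
Optimality of $u_k^*$ forces this directional derivative to be nonnegative for every admissible direction $w$ (over the linear space $\mathcal{U}$, hence also $\leq 0$ by taking $-w$, giving a stationarity condition), and since $H_k$ enters $J$ only through this stage, the inequality \eqref{eq:min-hamiltonian} — that the expected Hamiltonian is minimized at $u_k^*$ among all admissible controls — follows; I would note that, as stated with ``$\leq$'' for arbitrary competitors rather than a first-order condition, one really needs either convexity of $u\mapsto H_k$ (so that stationarity implies global minimality) or to read \eqref{eq:min-hamiltonian} as the infinitesimal/first-order statement, and I would make that reading explicit.

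\textbf{Main obstacle.} The genuinely delicate point is \emph{not} the algebra of the adjoint telescoping — that is routine — but the interchange of differentiation and expectation, $\partial_\theta \E[\cdots] = \E[\partial_\theta(\cdots)]$, together with the regularity needed for the chain rule along the composition: $f$, $\mathcal{L}_k$, $g$ must be $C^1$ and, since $X_0 \sim \rho$ ranges over a possibly unbounded set and the maps here (compositional tensors with polynomial features) are only \emph{locally} Lipschitz with locally bounded derivatives, one needs a domination hypothesis — e.g. $\rho$ compactly supported, as is in fact assumed throughout the compression section, or integrability of the relevant sup-bounds — to justify dominated convergence. I would therefore either invoke the standing compact-support / local-Lipschitz setting already in force in the paper (so all states $X_k$ live in a fixed compact set and all derivatives are uniformly bounded there), or state the mild integrability assumption explicitly at the top of the proof. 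The remainder is bookkeeping: set up the perturbed trajectory, write the first variation, telescope via the costate equation, and conclude.
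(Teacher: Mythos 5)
Your variational argument is correct and is essentially the paper's route: the paper disposes of this lemma in a single line (``immediately follows using Lagrange multipliers''), and your adjoint-telescoping computation of the reduced first variation is exactly what that line unpacks to, with the costates $\lambda_k^*$ playing the role of the multipliers. Your caveat is also well taken: the first-order argument only yields stationarity of $u\mapsto\E[H_k(X_{k-1}^*,u,\lambda_k^*)]$ over the linear space $\mathcal{U}$, so the global inequality \eqref{eq:min-hamiltonian} --- stated, moreover, against arbitrary state--costate tuples rather than with $X^*,\lambda^*$ fixed --- requires either convexity of the Hamiltonian in $u$ or a first-order reading, a point the paper's one-line proof does not address either.
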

The proof of \cref{lemma:pmp-discrete_stoch} immediately follows using Lagrange multipliers.
Note that the costates $\lambda_k$ are  random variables as $X_0 \sim \rho$.

The Pontryagin maximum principle leads to the natural algorithm called method of successive approximation (MSA), which basically, given an initial control, tries to optimize iteratively the Hamiltonian by solving the state and costate equations, and then minimizes the Hamiltonian.

\begin{algorithm}[H]
    \KwRequire{$u_1^{(0)},\ldots,u^{(0)}_{L} \in \mathcal U$, $X_0 \sim \rho$}
    $\Var{\Var{i}} = 0$\;
    \While{\emph{cond} not satisfied}{%
        Let $ X^{(\Var{i})}_{k}= f(X^{(\Var{i})}_{k-1},u^{(\Var{i})}_{k})$,\quad $k=1,\ldots,L$ ,\quad $ X_{0}^{(\Var{i})} =  X_{0}$ \;
        Let $ \lambda^{(\Var{i})}_{k} = \partial_{ X^{(\Var{i})}_{k}} \mathcal{L}( X^{(\Var{i})}_{k},u^{(\Var{i})}_{k+1}) +  \partial_{ X^{(\Var{i})}_{k}} f( X^{(\Var{i})}_{k},u^{(\Var{i})}_{k+1})\cdot \lambda^{(\Var{i})}_{k+1}$,\quad $k=L-1,\ldots,0$, \quad $ \lambda_L^{(\Var{i})} = \nabla  g( X_L^{(\Var{i})})$,\;
        Compute $u_{k}^{(\Var{i}+1)} \in \argmin_{u \in \mathcal U} \E [H(X_{k-1}^{(\Var{i})}, u,  \lambda_{k}^{(\Var{i})})]$,\quad $k=1,\ldots,L$ 
        \;
        Update $\Var{i} = \Var{i} + 1$\;
    }
    \caption{Method of successive approximation (MSA)}
    \label{algorithm:msa_stoch_theory}
\end{algorithm}

\subsubsection{On the choice of the regularization}

The choice of $\mathcal{L}_k$ leads to different solutions with different interpretations.

\paragraph{Frobenius norm regularization.} A naive choice is to define $\mathcal{L}_k$ as the squared Frobenius norm of $\bm \psi_k$, i.e. $\mathcal{L}_k(X_{k-1}, \psi_k) = \frac{R}{2}\|\bm \psi_k\|_F^2$, where $R>0$ is some regularization parameter. This choice seems to make sense as an extension of a Tikhonov--like regularization to multiple layers. However, letting $\rho = \mathfrak{L}_\#\mu$ and choosing $L^2_{\rho}$-orthonormal basis functions, the space of coefficient tensors is isometric to $L^2_{\rho}$ and in particular we have $\|\bm \psi_k\|_F^2 = \|\psi_k\|_{L_\rho^2}^2$. This is not desirable since in layer $k$, $\psi_k$ no longer gets points from the distribution $\mu$, but instead from the push-forward measure $\rho_{k-1} = [(\Id + \psi_{k-1}) \circ \dots \circ (\Id + \psi_1)]_\sharp \rho \sim X_{k-1}$ (with $\rho_0=\rho$ by convention). Therefore, the regularization is not natural in the sense that it does not take into account the dynamics, and forces $\psi_k$ to be small in the $L_\rho^2$--sense.

\paragraph{Natural regularization.} The above issue can be rectified by choosing  $\mathcal{L}_k(X_{k-1}, \psi_k) = \frac{R}{2}\|\psi_k(X_{k-1})\|^2$, $R>0$, and consequently, $\E[\|\psi_k(X_{k-1})\|^2] = \|\psi_k\|_{L_{\rho_{k-1}}^2}^2$. The regularization then forces $\psi_k$ to be small in the push-forward sense, by taking into account the dynamics.

\paragraph{Connection of MSA to gradient descent.} The application of the MSA algorithm to \eqref{eq:stoch_oc_prob} with running costs chosen via a Frobenius norm or natural regularization has clear connections to gradient descent schemes. The connection between standard MSA and gradient descent in a neural network setting has already been discussed e.g. in \cite{li2018maximum}. There, it is shown that the MSA algorithm becomes standard gradient descent if one replaces the minimization of the Hamiltonian (``hard'' update) with a relaxation (``soft'' update).

When working on the full tensor space, which is linear, the connection is even stronger. Consider the running costs defined by $\mathcal{L}_k(X_{k-1},\psi_k) = \frac{R}{2}\|\bm \psi_k\|^2_F$. The Hamiltonian in this case is given by
\begin{equation*}
\begin{aligned}
    H_k(X_{k-1},\psi_k,\lambda_k) &= \frac{R}{2}\|\bm \psi_k\|_F^2 + \langle \lambda_k,X_{k-1}+\psi_k(X_{k-1})\rangle \\
    &= \frac{R}{2}\|\bm \psi_k\|_F^2 + \langle \lambda_k,X_{k-1}+\left(\langle \bm \psi_k(j,\cdot), \Phi(X_{k-1}) \rangle_F\right)_{j=1}^p \rangle,\\
     \partial_{\bm \psi}H_k(X_{k-1},\psi_k,\lambda_k) &=  R\bm\psi_k + \lambda_k \Phi(X_{k-1}),
\end{aligned}
\end{equation*}
where $\lambda_k \Phi(X_{k-1})$ is shorthand for a random variable with realizations in $\R^{p\times n\times \ldots \times n}$ defined by
\[
    \lambda_k \Phi(X_{k-1})(j,\cdot) = \lambda_{k,j}\Phi(X_{k-1}).
\]
Hence, the MSA yields an explicit update formula for the control in every step, which is
\begin{equation*}
    \bm \psi_k^{(\Var{i}+1)} = -\frac{1}{R}\E[\lambda^{(\Var{i})}_k \Phi(X^{(\Var{i})}_{k-1})] = 
 \bm \psi_k^{(\Var{i})} - \frac{1}{R}(  R\bm \psi_k^{(\Var{i})} + 
 \E[\lambda^{(\Var{i})}_k \Phi(X^{(\Var{i})}_{k-1})]) = \bm \psi_k^{(\Var{i})} - \frac{1}{R}\E[\partial_{\bm \psi}H_k(X^{(\Var{i})}_{k-1}, \psi_k^{(\Var{i})},\lambda^{(\Var{i})}_k)].
\end{equation*}
Hence, in our setting, the hard update corresponds to a gradient descent update with step size $1/R$ in the parameter space.

In general, when working in the functional setting, suppose that we want to solve the minimization problem
\begin{equation*}
    \left\{
    \begin{aligned}
    &\min_{\psi_1,\ldots,\psi_L}  J(\psi_1,\dots,\psi_L) := \E_{{X}_0\sim \rho} \left[ \sum_{k=1}^L \frac R 2\|\psi_k(X_{k-1})\|_2^2 + g(X_L)\right] \\
    &X_{k+1} = X_k + \psi_{k+1}(X_k), \quad k=0,\ldots,L-1,\\
    &\quad X_0\sim \rho
    \end{aligned}
 \right.
\end{equation*}
for functions $\psi_k \in L_{\rho_{k-1}}^2$.
The MSA algorithm aims to minimize the quantity $\E[H_k(X_{k-1}^{(q)}, \psi_k, \lambda_k^{(q)})]$ with respect to $\psi_k$, where
\begin{equation*}
    H_k(X_{k-1}^{(q)}, \psi_k, \lambda_k^{(q)}) := \frac R 2 \|\psi_k(X_{k-1}^{(q)})\|_2^2 + (\lambda_k^{(q)})^\top (X_{k-1}^{(q)} + \psi_k(X_{k-1}^{(q)}))
\end{equation*}
and $(X_{k}^{(q)})_k$ and $(\lambda_k^{(q)})_k$ are the states and costates obtained using the controls $(\psi_k^{(q)})_k$, respectively. The Riesz representative of the Fréchet derivative of $\E[H_k(X_{k-1}^{(q)}, \psi_k, \lambda_k^{(q)}]$ with respect to $\psi_k$ is denoted $\nabla_{\psi_k} \E[H_k(X_{k-1}^{(q)}, \psi_k, \lambda_k^{(q)}]$, and is given by
\begin{equation*}
    \nabla_{\psi_k} \E[H_k(X_{k-1}^{(q)}, \psi_k, \lambda_k^{(q)})] = R\psi_k + \lambda_k^{(q)},
\end{equation*}
where in this expression $\lambda_k^{(q)}$ has to be understood as a function.
The optimality conditions give
\begin{align*}
    \psi_k^{(q+1)} &= -\frac 1 R \lambda_k^{(q)}\\
    &= \psi_k^{(q)} - \frac 1 R (R\psi_k^{(q)} + \lambda_k^{(q)})\\
    &= \psi_k^{(q)} - \frac 1 R \nabla_{\psi_k} \E[H_k(X_{k-1}^{(q)}, \psi_k^{(q)}, \lambda_k^{(q)})],
\end{align*}
which is a (functional) gradient descent with step size $\frac 1 R$.

\paragraph{Augmented Hamiltonian.} It has been shown \cite{Kerimkulov2021,li2018maximum} that  classical MSA cannot be guaranteed to update the control in the optimal descent direction, and even diverges in general. To overcome this issue, we need to modify the Hamiltonian in such a way that the MSA converges. As proposed in \cite{Kerimkulov2021,li2018maximum}, we introduce the \emph{augmented Hamiltonian}
\begin{equation}
    \tilde{H}_\gamma(k, x, \lambda, \psi, \tilde \psi) := H_k( x, \lambda, \psi) + \frac 1 2 \gamma \|\psi(x) - \tilde \psi(x)\|_2^2, \quad \gamma \geq 0.
    \label{eq:augmented-hamiltonian}
\end{equation}
The minimization step of the Hamiltonian in the classical MSA is then replaced by a minimization step of the {augmented Hamiltonian}. The difference between $H$ and $\tilde H$ is that the augmented Hamiltonian now forces $\psi$ staying in a neighbourhood of $\tilde\psi$ with respect to some metric. For example, if $x = X_{k-1}$ and $\tilde\psi = \psi_k$, then it forces the new dynamic induced by $\psi$ to stay in a neighbourhood of the dynamic induced by $\psi_k$.

\subsubsection{Application in the case of tensors}

In our setting, $\psi$ is a tensor and we derive the optimality condition to minimize the \emph{augmented Hamiltonian} \eqref{eq:augmented-hamiltonian}.
Recall that any function $\psi \in \mathcal{V}_{\mathbf{n}}$ can be represented as
\begin{equation*}
    \psi : x \mapsto \langle \bm{\psi} , \Phi(x) \rangle_F = \left(\sum_{\alpha} \bm{\psi}(j, \alpha_1,\dots,\alpha_p) \phi_{\alpha_1}(x_1)\dots \phi_{\alpha_p}(x_p)\right)_{j=1}^{p},
\end{equation*}
so that $\psi$ is a \emph{vector-valued function} parametrized by a tensor $\bm{\psi} \in \R^{p \times n \times \dots \times n}$.
For notational convenience, we write $\tilde{H}_\gamma(k, x, \lambda, \bm{\psi}, \widetilde{\bm{\psi}}) = \tilde{H}_\gamma(k, x, \lambda, \psi, \tilde \psi)$.
Taking the gradient of \eqref{eq:augmented-hamiltonian} with respect to $\bm{\psi}$ leads to the expression
\begin{align*}
    \nabla_{\bm{\psi}} \tilde{H}_\gamma(k, x, \lambda, \bm{\psi}, \widetilde{\bm{\psi}}) &= \nabla_{\bm{\psi}}\left(\frac R 2 \|\bm{\psi} \cdot \Phi(x)\|^2 + \lambda^\top (x + \bm{\psi}\cdot \Phi(x)) + \frac 1 2 \gamma \|\bm{\psi}\cdot \Phi(x) - \widetilde{\bm{\psi}}\cdot \Phi(x)\|^2\right)\\
    &= R G(x)  \bm{\psi} + \sum_{i=1}^d \lambda_i (e_i \otimes \Phi(x)) + \gamma G(x)  (\bm{\psi} - \widetilde{\bm{\psi}}),
\end{align*}
where
\begin{equation*}
    G : \left\{
    \begin{aligned}
        \R^d & \longrightarrow \mathcal{L}(\R^p \otimes \R^n \otimes\dots\otimes \R^n)\\
        x &\longmapsto I_d \otimes (\phi^{(1)}(x_1) \phi^{(1)}(x_1)^\top) \otimes\dots\otimes (\phi^{(d)}(x_d) \phi^{(d)}(x_d)^\top)
    \end{aligned}
    \right.
\end{equation*}
gives a symmetric and positive-definite operator from tensor space $\R^{p \times n \times \ldots \times n}$ to itself at each point $x \in \R^d$.

In our case, we use $B$ samples $(x_{j,k}^{(i)})_{j=1}^b$ for estimating the expectations leading to the minimization problem
\begin{equation*}
    \bm{\psi}_{k}^{(\Var{i}+1)} \in \argmin_{\bm{\psi}} \frac{1}{B}\sum_{j=1}^B \tilde H_\gamma(k-1, x_{j,k-1}^{(\Var{i})}, \lambda_{j,k}^{(\Var{i})}, \bm{\psi}, \bm{\psi}_k^{(\Var{i})}).
\end{equation*}
Setting the gradient of the functional with respect to $\bm{\psi}$ to zero yields the linear tensor equation
\begin{equation*}
    \left(\frac{1}{B} \sum_{j=1}^B G(x_{j,k-1}^{(i)})\right)  ((R+\gamma)\bm{\psi}_k^{(i+1)} - \gamma \bm{\psi}_k^{(i)}) = -\frac{1}{B}\sum_{j=1}^B \sum_{n=1}^d \lambda_{j,k,n}^{(i)} (e_n \otimes \Phi(x_{j,k-1}^{(i)})).
\end{equation*}
Such an equation can be solved using ALS \cite{holtzAlternatingLinearScheme2012} or AMEn \cite{dolgovAlternatingMinimalEnergy2014}. Note that the operator $\frac{1}{B} \sum_{j=1}^B G(x_{j,k-1}^{(i)})$ and the right-hand side have  representations in canonical tensor format with canonical tensor rank $Bd$.
In order to have the updates $\bm{\psi} _k^{(\Var{i}+1)}$ in the Tensor-Train format, one can perform a TT-rounding, or a truncation to some given ranks $\bm{r}$ \cite{Oseledets2011}.

\subsection{Natural gradient descent}
\label{sec:ngd}

Gradient descent algorithm is a prominent optimization method, especially for neural networks, and have shown good convergence properties in many applications. It has been proven that its stochastic version (this means, with a stochastic estimation of the true gradient) converges to critical points, even for non-convex objective functions \cite{Davis2019}. Furthermore, it can escape and avoid saddle points \cite{jin2017escape,jin2021nonconvex}. However, vanilla gradient descent tends to converge slowly, and often does not achieve a high accuracy. 
Following \cite{muller2024position}, we propose to employ the \emph{natural gradient descent} \cite{amariNaturalGradientWorks1998} for this specific architecture. We first provide a formal derivation.

Let $\mathcal{M} \subset L_\mu^2 := \mathcal{H}$ be a nonlinear model class. Consider the minimization problem
\begin{equation}
    \inf_{u \in \mathcal{M}} \mathcal{L}(u), \qquad \mathcal{L} : L_\mu^2 \to \R.
    \label{eq:min-pb-func}
\end{equation}
We aim to solve this problem using an iterative scheme of the form,
\begin{equation}
    u_{k+1} = u_k + \alpha_k d_k, \qquad d_k = -\nabla\mathcal{L}(u_k).
    \label{eq:descent-func}
\end{equation}

\begin{remark}
    In general, for an arbitrary Hilbert space $\mathcal{H}$, $d_k$ is defined by $d_k = -Q_{u_k}^{-1}(\nabla\mathcal{L}(u_k))$, where $\nabla\mathcal{L} \in \mathcal{H}$ is the gradient of $\mathcal{L}$ and $Q_{u_k}$ is an invertible, symmetric and positive definite linear map. For example,
    \begin{itemize}
        \item by choosing $d_k = -\nabla \mathcal{L}(u_k)$ we recover the \emph{Hilbert space gradient descent},
        \item by choosing $d_k = -\nabla^2 \mathcal{L}(u_k)^{-1}(D\mathcal{L}(u_k))$, with $\nabla^2 \mathcal{L}(u_k): \mathcal{H}\to \mathcal{H}$ the Hessian of $\mathcal{L}$ at $u_k$,  we recover the \emph{Hilbert space Newton method}. 
    \end{itemize}
\end{remark}

Henceforth, we consider a \emph{parametric class of functions}, $\mathcal{M} = \{u_\theta = P(\theta) : \theta \in \Theta\}$ where $P : \Theta \to \mathcal{M} \subseteq \mathcal{H}$ is the \emph{parametrization} and is assumed to be $C^1$, and $\Theta \subseteq \R^p$ is called the \emph{parameter space}. Since in practice, performing a descent in the functional space is infeasible, we aim to design an iterative scheme in the parameter space that mimics the descent \eqref{eq:descent-func}. Namely,
\begin{equation}
    \theta_{k+1} = \theta_k + \alpha_k w_k,
    \label{eq:update-ngd}
\end{equation}
such that $u_{\theta_{k+1}} \approx u_{k+1}$. Applying the Taylor theorem to $u_{\theta_{k+1}} = P(\theta_k + \alpha_k w_k)$ gives
\begin{equation*}
    u_{\theta_{k+1}} = u_{\theta_k} + \alpha_k DP(\theta_k)w_k + o(\alpha_k \|w_k\|).
\end{equation*}
Therefore, we would like to find $w_k$ such that we have $DP(\theta_k) w_k \approx d_k$.
The quantity $DP(\theta) w$ lives in the \emph{tangent space} of $\mathcal{M}$ at $u_\theta$, defined as
\begin{equation*}
    T_{u_\theta}\mathcal{M} := \Span\{\partial_{\theta_1} u_\theta,\dots, \partial_{\theta_p} u_\theta\} = \Span(DP(\theta)) \subset L_\mu^2.
\end{equation*}
This tangent space is equipped with the metric $\langle \cdot, \cdot\rangle_{L_\mu^2}$.
\begin{remark}
    In general, the tangent space $T_{u_\theta}\mathcal{M}$ can be equipped with a metric $\langle Q_{u_k} \cdot, \cdot\rangle_{\mathcal{H}}$ depending on $u_k$. The Hilbert space Newton method corresponds to $ Q_{u_k} = \nabla^2 \mathcal{L}(u_k)$.
\end{remark}

It follows that the $w_k$ that ``aligns the best with the functional descent'' is given by,
\begin{equation}
    w_k \in \argmin_{w \in \Theta} \frac{1}{2} \|DP(\theta_k) w - d_k\|_{L_\mu^2}^2.
    \label{eq:parameter-opt-direction}
\end{equation}

\begin{remark}[Gauss-Newton iterate]
    The update direction \eqref{eq:parameter-opt-direction} coincides with one Gauss-Newton step for the minimization of $L(\theta) := \frac{1}{2} \|P(\theta) - u\|_{\mathcal{H}}^2$, with $\mathcal{H}=L^2_\mu$. Indeed, a Gauss-Newton iteration is given by $\theta_{k+1} = \theta_k + w_k$, with $w_k$ minimizing over $w\in \Theta$ the function 
    $$
w\mapsto \frac{1}{2} \Vert P(\theta_k) + DP(\theta_k)w - u\Vert_{\mathcal{H}}^2 =  \frac{1}{2} \Vert DP(\theta_k)w - d_k\Vert_{\mathcal{H}}^2
    $$
    with $d_k = u - P(\theta_k) = -\nabla L(\theta_k) \in \mathcal{H}$. 

\end{remark}

Computing the normal equation of \eqref{eq:parameter-opt-direction} gives the update direction 
\begin{equation}
    w_k = -G(\theta_k)^\dagger \nabla_\theta L(\theta_k),
\end{equation}
where $L = \mathcal{L} \circ P : \Theta \to \R$ and $G(\theta_k) \in \R^{p \times p}$ is the \emph{Gramian matrix} defined by,
\begin{equation}
    G(\theta)_{ij} := \langle \partial_{\theta_i} u_\theta, \partial_{\theta_j} u_\theta\rangle_{L^2_\mu},
\end{equation}
and by $G(\theta)^\dagger$ we denote a \emph{pseudo-inverse} of $G(\theta)$, that means a matrix satisfying $GG^\dagger G = G$.

As illustrated in \cref{fig:diff-euclidean-natural-grad} for a particular $2$-dimensional model class $\mathcal{M}$, the flow obtained by the natural gradient  naturally points towards the optimal point, and the streamlines are more straight. 
\begin{figure}
    \centering
    \begin{subfigure}{0.49\linewidth}
        \centering
        \includegraphics[width=\linewidth]{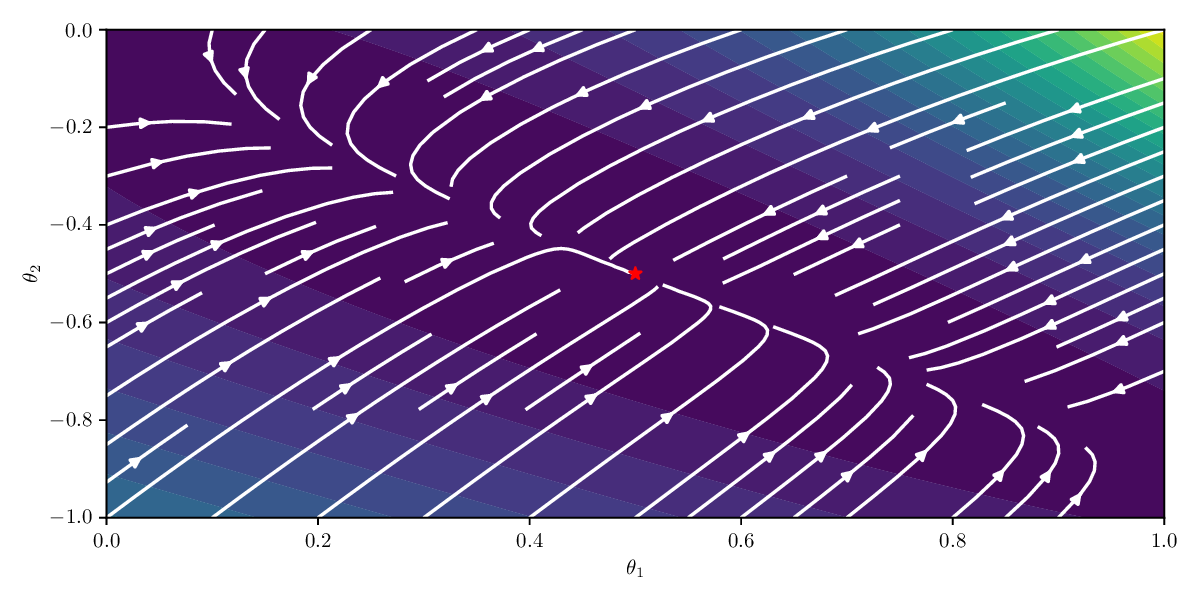}
        \caption{Euclidean gradient $\nabla_\theta L(\theta)$}
        \label{fig:euclidean-grad}
    \end{subfigure}
    \begin{subfigure}{0.49\linewidth}
        \centering
        \includegraphics[width=\linewidth]{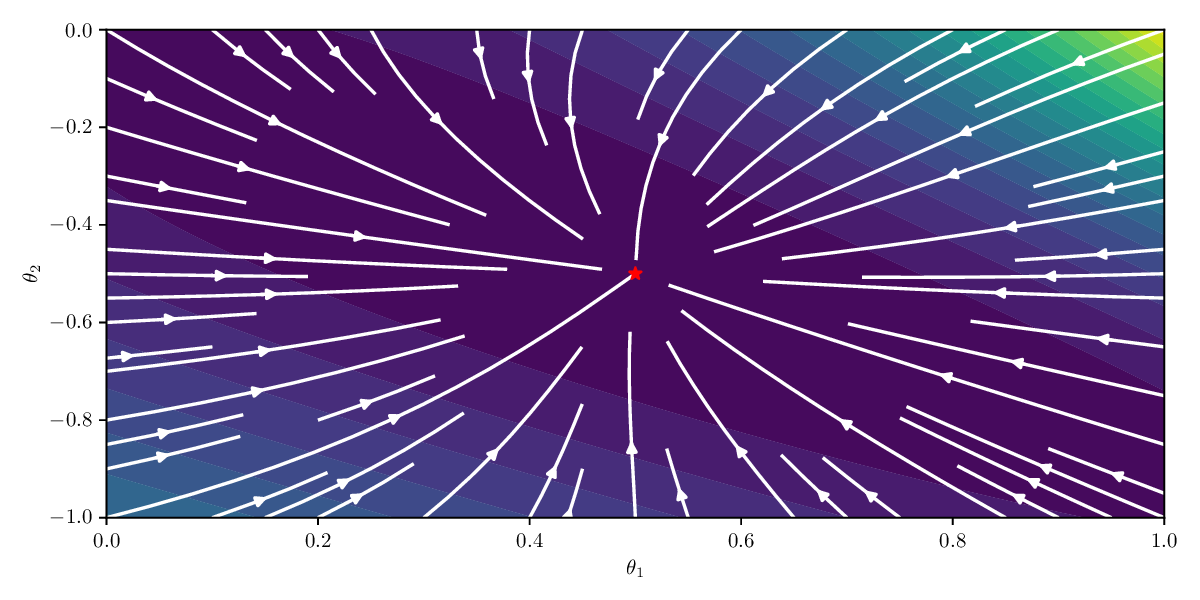}
        \caption{Natural gradient $\widetilde{\nabla}_\theta L(\theta) := G(\theta)^\dagger \nabla_\theta L(\theta)$}
        \label{fig:natural-grad}
    \end{subfigure}
    \caption{Visualization of the gradient fields $\nabla_\theta L(\theta)$ and $\widetilde{\nabla}_\theta L(\theta)$ for the model $u_\theta(x,y)=\exp(\theta_1 x + \cos(y-\theta_2))$ and the loss function $\mathcal{L}(u) = \frac{1}{2} \|u-u^*\|_{L^2([0,1]^2)}^2$.}
    \label{fig:diff-euclidean-natural-grad}
\end{figure}

An interesting property of the natural gradient is that it is reparametrization invariant \cite{van2023invariance}. That means, if we consider two parametrizations $P(\theta)$ and $ P(f^{-1}(\xi))$, where $f$ is a diffeomorphism, the update direction in $\mathcal{H}$ is the same. This implies that an update in the parameters for both parametrizations by an \emph{infinitesimal amount}, this would give the same result on the model. However, it is important to point out that updating the parameters by a \emph{finite amount} will, in general, give different locations on the model: the \emph{natural gradient descent is dependent on the choice of parametrization}.

We would also point out that it is believed that natural gradient descent can avoid saddle points because it updates the parameters in the optimal direction in $\mathcal{M}$ \cite{inoue2003line}. However, unlike the Hessian, the Gramian has only \emph{non-negative} eigenvalues, therefore it does not exploit explicitly negative curvature, which helps to escape saddle points. Consequently, it does not actively push away from saddle points. In \cite{mizutani2010analysis}, the authors argue that around a stationary solution, the Gramian matrix is rank-deficient, thefore the natural gradient descent may converge to instationary points and fail to escape.  Despite this undesired behavior, natural gradient descent has shown very good performance, particularly for PINNs \cite{muller2023achieving,nurbekyan2023efficient}. However, when employed with neural networks, the computation of the natural gradient is very resource-intensive, therefore its usage is very limited. Some methods have been developed to compute an approximation of this quantity, namely K-FAC \cite{martens2015optimizing,ba2017distributed,george2018fast} and techniques using randomized linear algebra \cite{bioli2025acceleratingnaturalgradientdescent}.

\paragraph{Implementation for CTTs.} In our case, if we want to exploit the tensor algebra, we cannot apply the natural gradient descent directly to the model class $\mathcal{CT}(\Phi)$ (and also $\mathcal{CTT}_{\leq \bm{r}}(\Phi)$), since it is not clear whether the sum of two compositional tensors yields a compositional tensor (even with probably a different number of layers). Instead, we propose to apply it layerwise, i.e. on each tensor $\psi_\ell$ on the $\ell$-th layer, since in this case we preserve the tensor structure (and also the low-rank format if used). The function $u_\theta$ is an element of $\mathcal{CT}(\Phi)$, where $\theta = (\bm{\psi_1},\dots,\bm{\psi_L})$ represents the tensors of the layers.
By applying the chain rule, we immediately get
\begin{equation}
    \frac{\partial u_\theta}{\partial \bm{\psi}_\ell}(x) = \underbrace{\frac{\partial u_\theta}{\partial u_\ell}(x)}_{\in \R^{d_o \times d}} \otimes \Phi^{\otimes d}(u_{\ell-1}(x)) \in \R^{d_o \times d} \otimes (\R^m)^{\otimes d},
    \label{eq:chain-rule-param}
\end{equation}
where $u_\ell(x) = (\Id + \psi_\ell) \circ \dots \circ (\Id+\psi_1) \circ \mathfrak{L}(x)$. The Gramian of the $\ell$-th layer is then given by
\begin{equation}
    G_\ell(\theta) = \E_{x}\left[\left(\frac{\partial u_\theta}{\partial u_\ell}(x)^\top \frac{\partial u_\theta}{\partial u_\ell}(x)\right) \otimes \left((\Phi\Phi^\top)^{\otimes d}(u_{\ell-1}(x))\right)\right] \in \R^{d\times d} \otimes (\R^{m \times m})^{\otimes d}.
    \label{eq:gram-tensor}
\end{equation}
For computing the natural gradient layerwise we thus have to solve the normal equation
\begin{equation}
    G_\ell(\theta) \cdot d = \E_x\left[\left(\frac{\partial u_\theta}{\partial u_\ell}(x)\right)^\top \nabla \mathcal{L}(u_\theta)(x)\right].
    \label{eq:ng-layerwise-normal}
\end{equation}
In our case, we can approximate the $L_\mu^2$-inner product by a Monte-Carlo approximation using $B$ samples and therefore consider an empirical version of \eqref{eq:ng-layerwise-normal}. Indeed, a Monte-Carlo approximation of $G_\ell(\theta)$ is given by
\begin{equation}
    \widehat{G_\ell(\theta)} = \frac{1}{B} \sum_{i=1}^B \underbrace{\left(\frac{\partial u_\theta}{\partial u_\ell}(x^i)^\top \frac{\partial u_\theta}{\partial u_\ell}(x^i)\right)}_{C_{\ell,0}(x^i)} \otimes \underbrace{\Phi(u_{\ell-1}(x^i)_1) \Phi(u_{\ell-1}(x^i)_1)^\top}_{C_{\ell,1}(x^i)} \otimes \dots \otimes \underbrace{\Phi(u_{\ell-1}(x^i)_d) \Phi(u_{\ell-1}(x^i)_d)^\top}_{C_{\ell,d}(x^i)}
    \label{eq:gram-tensor-mc}
\end{equation}
In general, if $u_\theta \in \mathcal{CT}(\Phi)$ and no low-rank format is assumed, the time complexity of solving \eqref{eq:ng-layerwise-normal} is in $\mathcal{O}(\min(Bd_o m^{2d}, B^2 d_o^2 m^d))$, where $B$ is the number of samples used for estimating the inner product, $d_o$ is the output dimension of the model, $d$ is the dimension of the lifted space, and $m$ is the number of univariate basis elements of $\Phi$. Moreover, the storage complexity is in $\mathcal{O}(m^d)$ for each layer, which is infeasible in high dimensions. To circumvent this problem, one can assume low-rank format for the layers $\bm{\psi_\ell}$, e.g. the tensor-train format. For solving \eqref{eq:ng-layerwise-normal} with a low-rank tensor format, one can use the \emph{alternating linear scheme (ALS)} \cite{holtzAlternatingLinearScheme2012}. However, the Gram matrix associated to the layer $\ell$ may have a bad condition number, so that ALS without preconditioning may show a   slow convergence and yield a highly suboptimal low-rank approximation of the update direction. We should also note that in this case, the update \eqref{eq:update-ngd} may leave the manifold, therefore it is required to use a \emph{retraction} $R_U : T_U \mathcal{M} \to \mathcal{M}$. The update for each layer $\bm{\psi_\ell}$ is therefore given by
\begin{equation}
    \bm{\psi}_\ell^{(k+1)} = R_{\bm{\psi}_\ell^{(k)}}(-\alpha^{(k)}\bm{w}_\ell^{(k)}),
    \label{eq:update-ngd-low-rank}
\end{equation}
where the iteration number $k$ is written at superscript for ease of reading.

\begin{algorithm}[H]
    \KwRequire{Samples $(x^i)_{i=1}^B$, initial tensors $(\bm{\psi}_\ell)_{\ell=1}^L$}
    $\Var{k} \gets 0$\;
    \While{not converged}{%
        \For{$\ell \gets 1$ \KwTo $L$}{
            Assemble the Gramian $\widehat{G_\ell(\theta)}$ \eqref{eq:gram-tensor-mc}\;
            Compute the gradient $\nabla_{\bm{\psi}_\ell} \mathcal{L}(u_\theta)(x^i)$\;
            Solve the normal equation \eqref{eq:ng-layerwise-normal} and store $\bm{w}_\ell^{(k)}$\;
        }
        Compute the learning rate $\alpha^{(k)}$\;
        \For{$\ell \gets 1$ \KwTo $L$}{
            \If{low-rank}{
                Update $\bm{\psi}_\ell^{(k+1)} \gets R_{\bm{\psi}_\ell^{(k)}}(-\alpha^{(k)} \bm{w}_\ell^{(k)})$\;
            }
            \Else{
                Update $\bm{\psi}_\ell^{(k+1)} \gets \bm{\psi}_\ell^{(k)}-\alpha^{(k)} \bm{w}_\ell^{(k)}$
            }
        }
        $\Var{k} \gets \Var{k} + 1$\;
    }
    \caption{Layerwise natural gradient for CTTs}
    \label{algorithm:layerwise-ngd}
\end{algorithm}

The Gram matrices $G_\ell$ are typically highly ill-conditioned, which poses significant challenges for solving the corresponding normal equations \eqref{eq:ng-layerwise-normal}.
A well-established approach to mitigate this issue is to approximate $G_\ell$ with a low-rank surrogate. In particular, the randomized Nystr\"om method \cite{Nystrm1930,williams2000using,pmlr-v28-gittens13,Martinsson2020} achieves this by projecting $G_\ell$ onto a randomly generated, low-dimensional subspace. In the context of this work, the Gram matrix $G_\ell$ is a linear operator acting on tensor spaces $G_\ell : \R^{d \times n \times \dots \times n} \to \R^{d \times n \times \dots \times n}$. To construct a suitable low-rank approximation, we introduce a tensor-structured sketch $S \in \R^{s \times d \times n \dots \times n}$ \cite{rudelson2012row,sun2021tensor}, where $s \in \N$ is the sketching size. The projections $G_\ell S_j \in \R^{d \times n \dots \times n}$ are computed for $1 \leq j \leq s$, and we seek a solution in the span of these random projections of $G_\ell$. We choose the sketch $S$ to be of the form
\begin{equation*}
    S_j =\frac{1}{\sqrt{s}} s_{j,
    0} \otimes \dots \otimes s_{j,d}, \quad s_{j,p} \sim \mathcal{N}(0, I).
\end{equation*}

The contraction of the Monte-Carlo approximation of $G_\ell$ given in \eqref{eq:gram-tensor-mc} and $S_j$ can be efficiently computed
\begin{equation*}
    \widehat{G_\ell} S_j = \frac{1}{B} \sum_{i=1}^B C_{\ell,0}(x^i) s_{j,0} \otimes C_{\ell,1}(x^i) s_{j,1} \otimes \dots \otimes C_{\ell,d}(x^i) s_{j,d}.
\end{equation*}
Moreover, it can be computed without forming explicitly the Jacobian by using the \emph{Jacobian-vector product (JVP)}, see remark in the subsequent numerics section.

The key advantage of this Gaussian sketching approach is that, with high probability, the span of the sketch captures the dominant eigenspace of $G_\ell$. Consequently, the low-rank approximation retains the principal components corresponding to the largest eigenvalues while attenuating the directions associated with small eigenvalues. The resulting system behaves as if a spectral filter is applied to $G_\ell$, that is, the solution lies in the subspace spanned by the most significant eigenvectors of $G_\ell$.

\section{Numerical results}\label{sec:numerics}

In this section, we would like to observe the performance of the  \emph{natural gradient descent (NGD)} and MSA algotihms presented in previous section, compared to other (higher-order) optimization methods such as Adam, L-BFGS and BFGS. All the experiments are written in the JAX python framework \cite{jax2018github} and can be found on the GitHub repository \url{https://github.com/chmda/ctt}. For assembling the normal equation \eqref{eq:ng-layerwise-normal}, the Jacobians and the right-hand side are computed using \texttt{jax.vjp} instead of simply calling \texttt{jax.jacfwd}, speeding up the computation.

\subsection{Layers with full tensors (no low-rank formats)}

We first study the behaviour of the algorithms where it is not assumed that the different layers $\psi_\ell$ are in a low-rank format, i.e., we consider full tensors. The basis functions are chosen to be $\Phi = \{1, \Id\}$. For the considered approximation and recovery problems, the \emph{training dataset} $\mathcal{D}_{\text{train}}$ consists of $100,000$ samples drawn from the measure $\mu$ (which are specified for each problem), and the \emph{validation dataset} $\mathcal{D}_{\text{val}}$ consists of $2,000$ independent samples from $\mu$. The loss function considered is 
\begin{equation*}
    \mathcal{L}(u) := \frac{1}{2}\|u-u^*\|_{L_\mu^2(\Omega)}^2, \quad \mathcal{L} : L_\mu^2(\Omega) \to \R^+,
\end{equation*}
where $\mu$ is the uniform measure on $\mathcal{X}$. This loss function is however replaced by its empirical version,
\begin{equation*}
    \widehat{\mathcal{L}(u)} := \frac{1}{2} \sum_{x \in \mathcal{D}} |u(x)-u^*(x)|^2,
\end{equation*}
where $\mathcal{D}$ is an ensemble of random points, e.g. a mini-batch or the whole training dataset. The performance of the algorithms is measured using the $L_\mu^2(\mathcal{X})$-relative error estimated by
\begin{equation*}
    \hat{\varepsilon}^2 := \frac{\sum_{x \in \mathcal{D}_{\text{val}}} |\widetilde{u}(x) - u^*(x)|^2}{\sum_{x \in \mathcal{D}_{\text{val}}} |u^*(x)|^2},
\end{equation*}
where $\widetilde{u}$ is an approximation of $u^*$. If specified, the line-search algorithm used for finding optimal $\alpha_k$ has to satisfy the \emph{strong-Wolfe conditions} \cite[Chapter~3]{nocedal2006numerical}. Since the Gram matrix $G_\ell(\theta)$ can be rank-defficient and the computation of the natural gradient $w_\ell$ by solving the least-squares problem \eqref{eq:parameter-opt-direction} can be tedious, we propose to add a \emph{Tikhonov regularization} to \eqref{eq:ng-layerwise-normal}, which translates into adding a regularization parameter $\lambda \cdot \Id$ to $G_\ell(\theta)$ so that now we have to solve the linear equation 
\begin{equation*}
    (G_\ell(\theta) + \lambda \Id) \cdot d = \E_x\left[\left(\frac{\partial u_\theta}{\partial u_\ell}(x)\right)^\top \nabla \mathcal{L}(u_\theta)(x)\right].
\end{equation*}
This method can be seen as a \emph{trust-region approach}. It shifts the spectrum of $G_\ell(\theta)$, making the solving more stable. The choice of $\lambda$ remains an open question, but a rule of thumb is to increase $\lambda$ when the algorithm tends to diverge and to decrease it otherwise. The tensor initialization is also an important key for the good behaviour of the optimization algorithms. In order to avoid the explosion of the variance with the layers and the number of features, we propose to choose the tensors as
\begin{equation*}
    \bm{\psi_\ell} \sim \mathcal{N}\left(0, \frac{c}{Ln^d}\right), \quad 1 \leq \ell \leq L,
\end{equation*}
where $L$ is the number of layers, $n$ is the number of features of $\Phi$, and $c > 0$ is a positive constant chosen to be $c=2$. This choice is motivated by the works on neural network weight initialization \cite{he2015delving,zhang2019fixup}.

\subsubsection{A recovery problem}

Let $d \geq 2$ and
\begin{align}
    u^*(x)&=\prod_{i=1}^m (x_i y - x_{m+i})^2, \quad x \in \mathcal{X} = [0,1]^d,\\
    y &= \begin{cases}
        1 & \text{if}~d=2m\\
        x_d & \text{if}~d=2m+1
    \end{cases}.
    \label{eq:recovery-2}
\end{align}
This function can be exactly represented by a compositional tensor with 2 layers. Indeed, for simplicity take $d=2m$. For the case $d=2m+1$, we just have to multiply the terms $h_i$, $1 \leq i \leq m$, by $h_{2m+1}$. Let
\begin{align}
    u_1(h) &= (\underbrace{h_1-h_m,h_2-h_{m+1},\dots,h_m-h_{2m}}_{m},\underbrace{h_1-h_m,h_2-h_{m+1},\dots,h_m-h_{2m}}_{m}),\\
    u_2(h) &= (h_1 h_2 \dots h_{2m}, 0, \dots, 0),
    \label{eq:recovery-2-encoding}
\end{align}
then we have exactly $u^* = u_2 \circ u_1$. Furthermore, $u_1$ can be represented by a TT with ranks bounded by $2$ since it is just a difference of two variables, and $u_2$ can be represented by a TT with ranks equal to 1 since it is just the product of all variables.

For representing this function by a TT with polynomial basis, it must have maximal ranks, which grow exponentially with $d$. For example, if $d=4$, then the TT ranks are $(1, 3, 9, 3)$ and the number of parameters is $180$. This result can be verified experimentally: for the ranks ranging from $1$ to $9$, the relative $L^2$ error is $\{0.84, 0.68, 0.46, 0.16, 0.14, 0.11, 0.079, 0.013, 0\}$ (averaged over $25$ experiments).

The lifting operator is chosen as $\mathfrak{L}=\Id$ and the retraction operator as $\mathfrak{R}=e_1$. The experiments are conducted for $d=4$ and $d=5$. The number of training samples is fixed to $2048$, and the number of validation samples is fixed to $512$. Regarding hyperparameters for each experiment,
\begin{itemize}
    \item for $d=4$, the learning rate is fixed to $\alpha_k=0.7$, and the regularization coefficient is $\lambda=10^{-12}$,
    \item for $d=5$, the learning rate is chosen constant $\alpha_k=0.5$, and the regularization coefficient is $\lambda=10^{-11}$.
\end{itemize}

\begin{figure}[h]
    \centering
    \begin{subfigure}{0.49\linewidth}
        \centering
        \includegraphics[width=\linewidth]{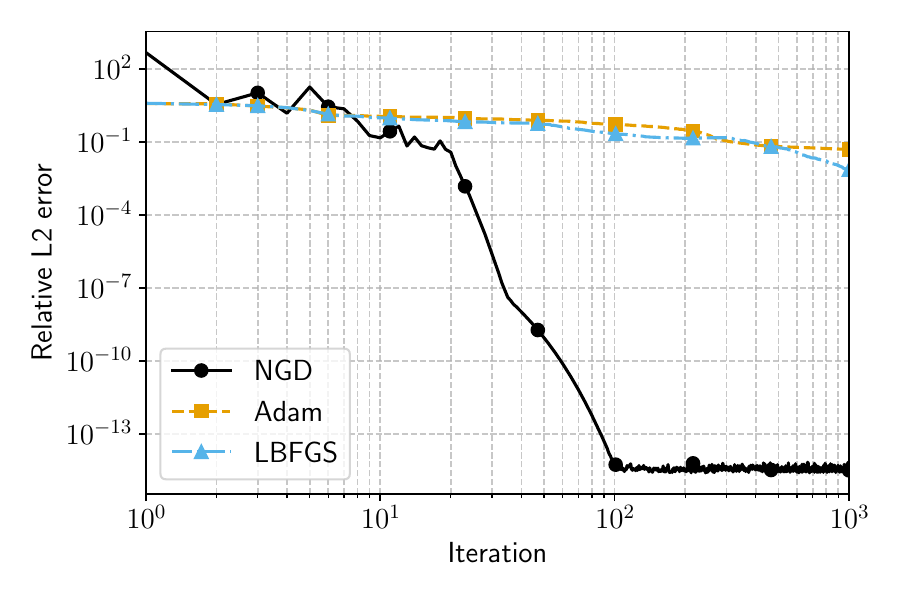}
        \caption{$d=4$}
        \label{fig:recovery-2-convergence-d4}
    \end{subfigure}
    \begin{subfigure}{0.49\linewidth}
        \centering
        \includegraphics[width=\linewidth]{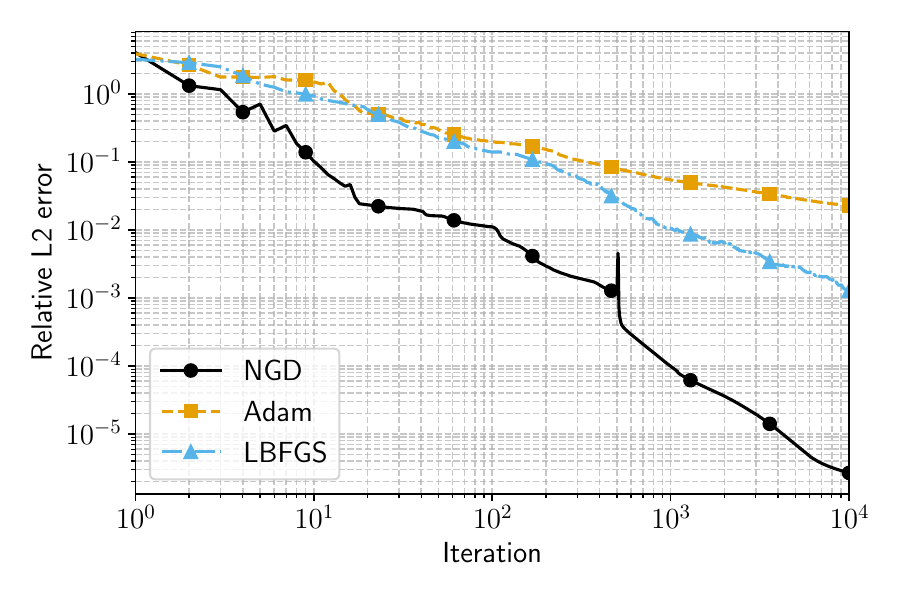}
        \caption{$d=5$}
        \label{fig:recovery-2-convergence-d5}
    \end{subfigure}
    \caption{Convergence plot for the optimizers Adam, NGD and L-BFGS for the recovery problem \eqref{eq:recovery-2} in log-log scale, for dimensions $d=4,5$.}
    \label{fig:recovery-2-convergence}
\end{figure}

The plots \cref{fig:recovery-2-convergence} show that the NGD converges faster to an exact solution (or a good approximation)   than the other optimisation algorithms. In \cref{fig:recovery-2-convergence-d4} we see that the NGD converges with a fast rate and converges quickly to the exact solution. In \cref{fig:recovery-2-convergence-d5}, the NGD does not converge to an exact solution of the problem. While it converges linearly, the rate is much better than of the other methods and a much lower error is reached after the same number of iterations.

\begin{figure}[h]
    \centering
    \begin{subfigure}{0.49\linewidth}
        \centering
        \includegraphics[width=\linewidth]{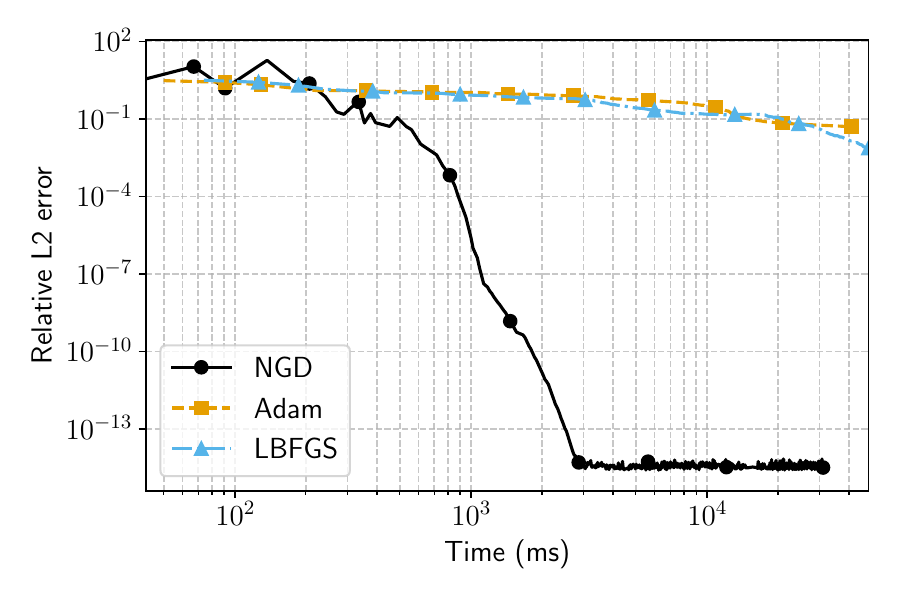}
        \caption{$d=4$}
        \label{fig:recovery-2-time-d4}
    \end{subfigure}
    \begin{subfigure}{0.49\linewidth}
        \centering
        \includegraphics[width=\linewidth]{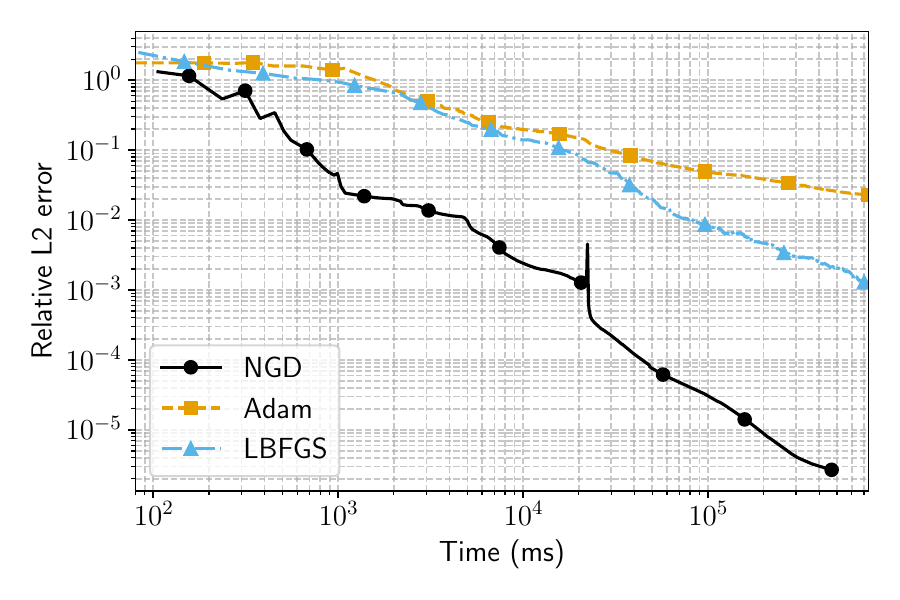}
        \caption{$d=5$}
        \label{fig:recovery-2-time-d5}
    \end{subfigure}
    \caption{Relative $L^2$ error versus time for the optimizers Adam, NGD and L-BFGS for the recovery problem \eqref{eq:recovery-2} in log-log scale, for dimensions $d=4,5$.}
    \label{fig:recovery-2-time}
\end{figure}

The plots in \cref{fig:recovery-2-time} show the evolution of the relative $L^2$ error versus the time spent for the optimisation algorithms. They show that the NGD is a bit faster than Adam for an iteration, and that, despite taking more time than L-BFGS, it converges faster.

\subsection{Layers in low-rank format}


In this subsection, we study the properties and the behaviour of the Gram matrix $G(\theta)$ in order to check if the low-rank solvers like AMEn or (M)ALS can be used. A well-known major challenge with NGD is the occurring rank-defficiency. We also explore some regularization techniques to make these solvers exploitable.

\subsubsection{On the conditioning of $G(\theta)$}

We propose to study the Gram matrix $G_\ell(\theta)$ of each layer $1 \leq \ell \leq L$. To do so, we track the \emph{effective condition number}
\begin{equation*}
    \kappa_\ell(\theta) := \|G_\ell(\theta)\|_{2 \to 2} \|G_\ell(\theta)^\dagger\|_{2 \to 2}
\end{equation*}
during the optimisation of the problem \eqref{eq:recovery-2} in dimension $d=4$. We also track the rank of the Gram matrices $G_\ell$. The experiment is repeated $25$ times.

\begin{figure}[h]
    \centering
    \begin{subfigure}{0.49\linewidth}
        \centering
        \includegraphics[width=\linewidth]{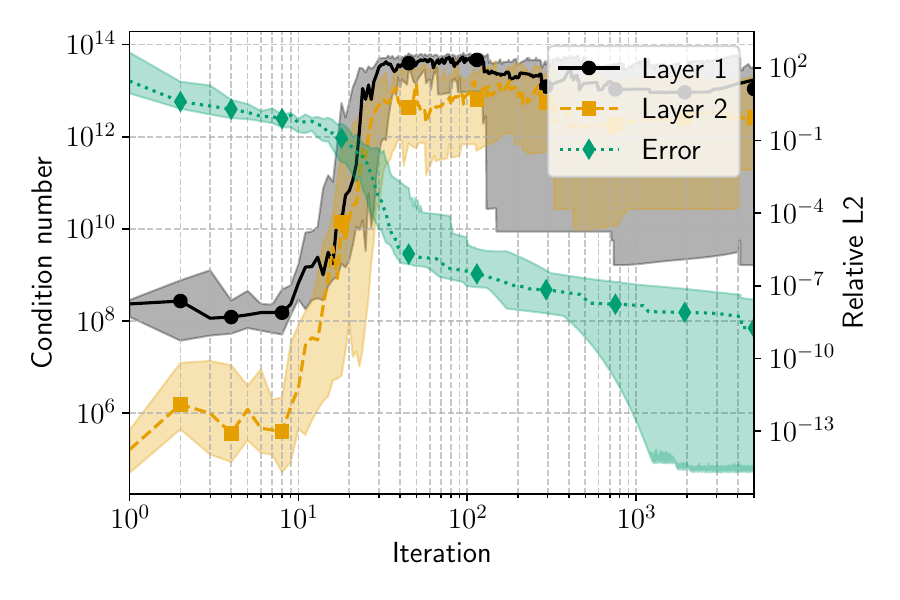}
        \caption{Condition number $\kappa_\ell$.}
        \label{fig:recovery-2-conditioning-d4}
    \end{subfigure}
    \begin{subfigure}{0.49\linewidth}
        \centering
        \includegraphics[width=\linewidth]{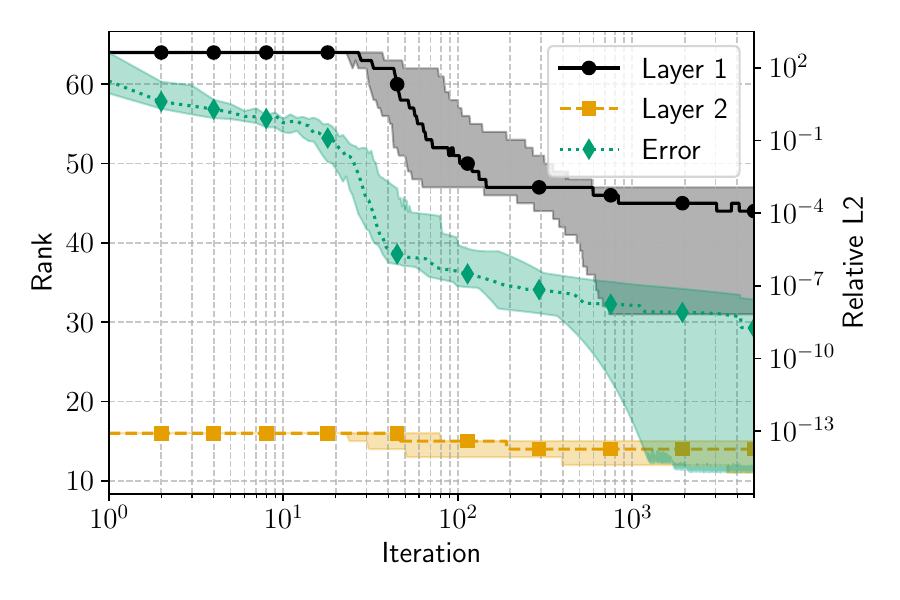}
        \caption{Rank of $G_\ell$.}
        \label{fig:recovery-2-rank-d4}
    \end{subfigure}
    \caption{Condition number and rank of $G_\ell$, with $\ell=1,2$, for the recovery problem \eqref{eq:recovery-2} for dimension $d=4$. The plain line is the median, and the envelope corresponds to the interquartile.}
    \label{fig:recovery-2-conditioning-rank}
\end{figure}

\cref{fig:recovery-2-conditioning-d4} shows that the Gram matrices become highly ill-conditioned during optimization, with condition numbers $\kappa_\ell$ ranging from $10^6$ up to $10^{14}$. In fact, the condition number increases rapidly, reaching values around $10^{13}$ after approximately $20$ iterations. Initially, directions associated with small eigenvalues play a useful role by guiding the optimizer toward a good configuration. However, as the solution approaches optimality, these directions contribute progressively less to the reduction of the loss.

Furthermore, as illustrated in \cref{fig:recovery-2-conditioning-rank}, the rank of the Gram matrices decreases over the iterations. This suggests  that the effective feature space collapses onto a lower-dimensional subspace: the span of active feature directions shrinks, and the remaining features become increasingly correlated. Consequently, the Gram matrix becomes closer to being low-rank.

\subsubsection{Random sketching}

\begin{table}[h]
    \centering
    \begin{tabular}{c|cc}
        Sketching size ($s$) & Learning rate ($\alpha$) & Regularization parameter ($\lambda$)\\
        \hline
        $20$ & $0.85$ & $10^{-11}$\\
        $30$ & $0.8$ & $10^{-12}$\\
        $40$ & $0.7$ & $10^{-12}$\\
    \end{tabular}
    \caption{Hyperparameters for each sketching size $s$.}
    \label{table:hyperparameters-sketching}
\end{table}

As discussed in \cref{sec:ngd}, one way to circumvent the ill-conditioned $G_\ell$ is to project it onto some random subspace.
We apply this randomized method to the recovery problem defined in \eqref{eq:recovery-2}, considering different choice of sketching size $s$. In our experiments, the dimension is fixed to $d=4$, resulting in $m=4 \cdot 2^4=64$ parameters per layer. {For each choice of $s$, the learning rate $\alpha$ and regularization parameter $\lambda$ are adjusted accordingly and heuristically as summarized in \cref{table:hyperparameters-sketching}}

\begin{figure}
    \centering
    \includegraphics[width=0.7\linewidth]{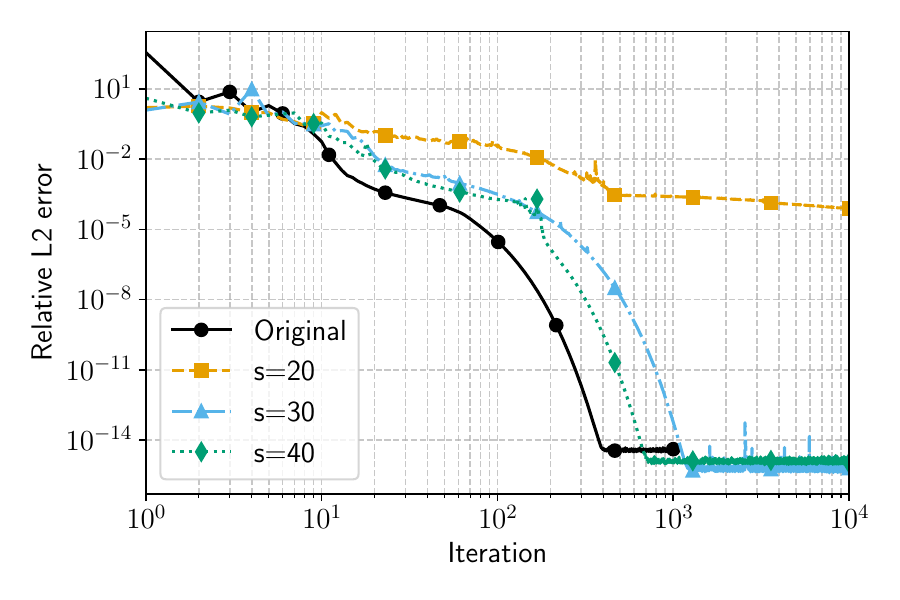}
    \caption{Random sketching for the problem \eqref{eq:recovery-2} in dimension $d=4$.}
    \label{fig:recovery-2-sketching}
\end{figure}

\cref{fig:recovery-2-sketching} shows that around $30$ components of eigenspace of $G_\ell$ is enough to convergence to an optimal solution, which is less than half of the number of eigendirections. However, taking only $20$ or $25$ components is not enough. This plot also confirms that the more components we keep, the better the convergence is. One can remark that taking less than $64$ eigendirections slows the convergence to an optimum.

\cref{fig:recovery-2-sketching} illustrates the convergence behavior for various sketching sizes. We can observe that retaining a rank-$30$ approximation of the Gram matrix  is sufficient to achieve convergence to an optimal solution, which is less than half of the total eigendirections. In contrast, retaining a rank $20$ or $25$   is inadequate for convergence. Moreover, the results confirm that increasing the rank improves convergence, although taking fewer than the full $64$ eigendirections slows the rate at which the solution approaches the optimum.

We also want to point out that \cref{fig:recovery-2-rank-d4} shows that the ranks of $G_\ell$ are not the same for each layer, and that in general the shallowest layer has a smaller rank. This suggests that the sketching size $s$ should be adapted for each layer.

\section*{Acknowledgment}

We acknowledge the partial funding by the ANR-DFG project COFNET (ANR-21-CE46-0015).
This work was partially conducted within the France 2030 framework programme, Centre Henri Lebesgue ANR-11-LABX-0020-01.

\bibliographystyle{abbrv}
\bibliography{ref}

\end{document}